\documentclass[11pt]{article}

\def\titlename{\huge Multipliers of Beurling-Fourier algebras}

\title{\titlename}

\def\authname{Mahmood Alaghmandan, Olof Giselsson, Ebrahim Samei, Lyudmila Turowska}
\author{{\normalsize\sc \authname}}

\usepackage{amsmath}

\usepackage{yfonts}

\usepackage{xcolor}
\definecolor{blue1}{RGB}{32,78,170}
\definecolor{blue2}{RGB}{93,92,160}
\definecolor{blue3}{RGB}{40,51,202}
\definecolor{blue4}{RGB}{0,0,0}
\definecolor{purple1}{RGB}{128,0,128}

\usepackage{geometry}
\usepackage{marginnote}
\newcommand{\mn}[1]{{\marginpar{ \begin{center}\footnotesize\color{blue}{\textsf{#1}}\end{center}}}}

\usepackage[all]{xy}

\definecolor{El}{rgb}{.4,.9,1}

\usepackage[T1]{fontenc}
\usepackage{marvosym}

\usepackage{titlesec}

\titleformat{\section}
  {\normalfont\fontsize{12}{13}\bfseries}{\thesection}{1em}{}

\usepackage[english]{babel}
\usepackage{blindtext}
\usepackage[scaled=.90]{helvet}
\usepackage{xcolor}
\usepackage{titlesec}
\titleformat{\chapter}[display]
  {\normalfont\sffamily\huge\bfseries\color{blue4}}
  {\chaptertitlename\ \thechapter}{20pt}{\Huge}
\titleformat{\section}
  {\normalfont\sffamily \large \color{blue4}}
{\thesection}{1em}{}

 \titleformat{\subsection}
  {\normalfont\sffamily\large\color{blue4}}
  {\thesubsection}{1em}{}

            \newcounter{pulse}[section]

\numberwithin{pulse}{section}
\numberwithin{equation}{section}

\newtheorem{theorem}[pulse]{\bf \textsf{Theorem}}
\newtheorem{proposition}[pulse]{\bf \textsf{Proposition}}
\newtheorem{lemma}[pulse]{\bf \textsf{Lemma}}
\newtheorem{corollary}[pulse]{\bf \textsf{Corollary}}

\newtheorem{lem}[pulse]{\bf \textsf{Lemma}}
\newtheorem{cor}[pulse]{\bf \textsf{Corollary}}

\newtheorem{dummy-eg}[pulse]{\bf \textsf{Example}}
\newtheorem{dummy-rem}[pulse]{\bf \textsf{Remark}}
\newenvironment{eg}{\begin{dummy-eg}\upshape}{\end{dummy-eg}\ignorespacesafterend}
\newenvironment{rem}{\begin{dummy-rem}\upshape}{\end{dummy-rem}\ignorespacesafterend}

\newtheorem{dummy-def}[pulse]{\bf \textsf{Definition}}

\usepackage{amsmath,amssymb}

\newenvironment{dfn}{\begin{dummy-def}\upshape}{\end{dummy-def}\ignorespacesafterend}

\newenvironment{proof}{\noindent{\it Proof.}\/}{\hfill$\Box$\newline\ignorespacesafterend}

\newcommand{\cA}{{\cal A}}
\newcommand{\cB}{{\cal B}}
\renewcommand{\ker}{\operatorname{Ker}}

\newcommand{\cK}{\mathcal{K}}

\newcommand{\om}{\omega} 

\newcommand{\norm}[1]{\Vert #1 \Vert}

\newcommand{\Ind}{{\bf I}}

\newcommand{\bG}{{\Bbb G}}

\newcommand{\cH}{{\mathcal{H}}}
\newcommand{\id}{\operatorname{id}}
\newcommand{\bm}{{\bf m}}
\newcommand{\tbm}{{\tilde{\bf m}}}
\newcommand{\bn}{{\bf n}}
\newcommand{\tbn}{{\tilde{\bf n}}}
\newcommand{\rang}{\operatorname{Ran}}

\begin{document}
\date{}
\maketitle
\begin{abstract}
For a locally compact group 
$G$
 we introduce and study the reduced Beurling–Four\-ier–Sti\-eltjes algebra, a weighted analogue of the reduced Fourier–Stieltjes algebra 
$B_r(G)$, together with the algebra of completely bounded multipliers of the associated weighted Fourier algebra. We show, in particular, that these two algebras coincide when $G$
is amenable. For a general locally compact group 
$G$,
 we identify them as subspaces of $B_r(G)$
 and of the space of functions that locally belong to the Fourier algebra, respectively. Furthermore, we establish sufficient conditions on the group and the weight under which the algebra of completely bounded multipliers of the weighted Fourier algebra embeds into its unweighted counterpart.

\end{abstract}


\vskip2.5em
\tableofcontents

\section{Introduction}
One of the standard function spaces used in the analysis of a locally compact group 
$G$ is the Fourier algebra 
$A(G)$, introduced by P. Eymard \cite{ey}. It is defined as the unique predual of the group von Neumann algebra 
$VN(G)$, that is, as the associated noncommutative 
$L_1$-space. It is well known that 
$A(G)$ can be realized as a Banach subalgebra of 
$C_0(G)$, and moreover, it determines the underlying group.

While $A(G)$ has the drawback of a complicated norm structure, it has the significant advantage of being commutative. This allows one to study its Gelfand spectrum $\text{spec}A(G)$, which, as shown by Eymard, is canonically homeomorphic to 
$G$ itself. This provides a nontrivial link between the structures of topological groups and Banach algebras.

On the other hand, the spaces of multipliers and completely bounded multipliers of 
$A(G)$ have played a central role in the study of group properties such as amenability, weak amenability, the Haagerup property, and various approximation properties for the associated group 
$C^*$ - and von Neumann algebras (see e.g.  \cite{brown-ozawa,canniere-haagerup, cowling-haagerup}). For example, a locally compact group 
$G$ is amenable if and only if the multiplier algebra of 
$A(G)$ coincides with the Fourier–Stieltjes algebra 
$B(G)$, the algebra generated by positive definite functions on 
$G$. The study of multiplier spaces has thus been a fundamental topic in abstract harmonic analysis \cite{boz, canniere-haagerup, cowling-haagerup,brown-ozawa}.

In recent years, several authors, including the second, third, and fourth authors, have investigated weighted versions of the Fourier algebra 
$A(G)$, known as Beurling–Fourier algebras 
$A(G,\omega)$, obtained by introducing weights that modify the norm structure of 
$A(G)$ \cite{LeSa, lst, ors, gllst, gh-lee, gt,lee_lee}. When 
$G$ is abelian, such algebras are isomorphic—via the Fourier transform—to classical Beurling algebras on the dual group $\hat G$, which explains the terminology.

In \cite{lst,gllst,gt, lee_lee}, the Gelfand spectrum of Beurling–Fourier algebras was studied and realized as a subset of a suitable complexification of the underlying group, revealing interesting connections between harmonic analysis and geometry. Furthermore, in \cite{ors}, the existence of a bounded approximate identity for $A(G,\omega)$
 was characterized in terms of amenability of $G$, extending the classical Leptin–Herz theorem. More recently, weighted Fourier algebras have also been studied in the setting of compact quantum groups \cite{fl, lee_voigt}.

In this paper, we initiate the study of a weighted analogue $B_r(G,\omega)$ of the reduced Fourier-Stieltjes algebra $B_r(G)$ defined using a weight inverse $\omega$ on the dual of $G$.
 We realize $B_r(G,\omega)$
 as a subspace of $B_r(G)$.
 Since the dual of a Beurling–Fourier algebra $A(G,\omega)$
 is still $VN(G)$,
 one can consider its completely bounded multipliers $M_{\rm cb}A(G,\omega)$.
 Viewing these as functions on $G$
 we prove that
 $$B_r(G,\omega)\subset M_{\rm cb}A(G,\omega)$$
and show that this inclusion is proper if and only if $G$
is non-amenable, extending the corresponding result in the unweighted case. 
Building on idea's of Jolissaint in \cite{jol} when proving a characterisation of completely bounded multipliers of $A(G)$, as Herz-Schur multipliers, we show that $M_{\rm cb} A(G,\omega)$ can be realized as  a subspace of $A(G)^{\rm loc}$, the space of functions which locally belongs to $A(G)$. We further identify a broad class of groups for which $M_{\rm cb}A(G,\omega)$ is a subset of the completely bounded multipliers for the unweighted case. This is achieved by relating the problem to the structure of closed convex hulls of $G$-orbits of the multiplier algebra of the reduced $C^*$-algebra $C_r^*(G)$ of $G$ as well as the existence of traces on $C_r^*(G)$. Finally, we establish this inclusion for arbitrary groups under natural restrictions on the weight, such as central weights and weights induced from normal amenable subgroups, which constitute one of the main sources of examples of weights.

\section{Preliminaries}\label{s:perliminaries}

Let $A$ be a C$^*$-algebra, and let $(\pi , \cH)$ be a $*$-representation of $A$. The closed linear span of $\{\pi(x)\xi : x \in A, \xi \in \cH\}$ is called the \emph{essential subspace} of $(\pi, \cH)$. The $*$-representation $\pi$ of $A$ is said to be \emph{nondegenerate}, if its essential subspace is $\cH$.

Let  $A^{**}$ be the second conjugate of $A$ and see it as the enveloping von Neumann algebra. We define the \emph{multiplier algebra} of $A$ as 
 \[
M(A)=\{x \in A^{**}: xA \cup A x \subseteq A\}.
\]
It is easy to check that $M(A)$ is a closed $C^*$-subalgebra of $A^{**}$ and $M(A)=A$ if and only if $A$ is unital; $A$ is moreover a closed ideal in $M(A)$. 
If $(\pi, \cH)$ is a faithful nondegenrate representation of $A$, then $M(A)$ is $*$-isomorphic to
\[
\{ x \in B(\cH):   x \pi(A) \cup \pi(A) x \subseteq \pi(A)\},
\]
see \cite[Proposition~2.12.9]{ren}.

Consider two $C^*$-algebras $A$ and $B$ and a $*$-homomorphism $\pi : A\rightarrow M (B)$. It is called \emph{non-degenerate} if $\pi(A)B := \{ \pi(a)b : a\in A, b \in B \}$ is dense in $B$.


Let $G$ be a locally compact group, and let  $\lambda^G$ be the left regular representations of $G$. We will write $\lambda$ instead of $\lambda^G$ when there is no fear of ambiguity. We write $VN(G)$ and $C_r^*(G)$ for the von Neumann algebra and the reduced $C^*$-algebra of $G$, respectively, associated to $\lambda$. Let
\[
\Gamma: VN(G) \rightarrow VN(G) \bar{\otimes} VN(G))
\]
be the \emph{co-multiplication} on $VN(G)$ defined by $\Gamma(\lambda_t) = \lambda_t \otimes \lambda_t$ for every $t \in G$. Its restriction to $C^*_r(G)$, which we still denote by $\Gamma$, is a $*$-homomorphism
\[
\Gamma: C^*_r(G) \rightarrow M(C^*_r(G) \otimes C^*_r(G)).
\]
Here and below we write $A \otimes B$ for the minimal tensor product of $C^*$-algebras $A$ and $B$.
Therefore,
\[
\Gamma^*: M(C^*_r(G) \otimes C^*_r(G))^* \rightarrow C^*_r(G)^*=B_r(G),
\]
where $B_r(G)$ is the reduced Fourier-Stieltjes algebra of $G$. 
As $\Gamma$ is a non-degenerate $*$-homomorphism, it extends uniquely to a $*$-homomorphism
\[
\Gamma: M(C^*_r(G)) \rightarrow M(C^*_r(G) \otimes C^*_r(G)).
\]
We point out the following fact that we will use repeatedly throughout the article: The co-multiplication $\Gamma$ has the property that either of the the sets
$\{(1\otimes y)\Gamma(x): x,y \in C^*_r(G)\}$ and $\{(y \otimes 1)\Gamma(x): x,y \in C^*_r(G)\}$ spans a dense subset of $C^*_r(G\times G)\simeq C_r^*(G)\otimes C_r^*(G)$.

The topological dual $B_r(G)=C^*_r(G)^*$ is a Banach algebra under the multiplication $B_r(G) \times B_r(G) \rightarrow B_r(G)$ given by
\begin{equation}\label{eq:convolution}
(uv)(x)=\Gamma(x)(u\otimes v)
\end{equation}
for all $u,v \in B_r(G)$ and $x \in C^*_r(G)$.  We note that, in  \eqref{eq:convolution}, we consider $\Gamma(x)$ as an element in $(C^*_r(G) \otimes C_r^*(G))^{**}\supset M(C^*_r(G) \otimes C_r^*(G)) $. 

Finally, $B_r(G)$ can be  also identified with the weak* closure in the Fourier-Stieltjes algebra $B(G)\subset C_b(G)$ of the matrix coefficients of the left regular representation with multiplication being pointwise multiplication. More precisely, $B_r(G)$ is the algebra of matrix coefficients of representations $\pi$ of $G$ weakly associated to $\lambda$.  For basic properties of $B_r(G)$ and $B(G)$ we refer the reader to \cite{kaniuth-lau}.

\section{Weight inverses on the dual of locally compact groups}

There are different ways to define weights on the dual of locally compact groups, see \cite{gllst, LeSa, lst}. In this paper, we adopt the one proposed in \cite{ors}.

\begin{dfn}\label{D:Weight}
(\cite[Definition 2.1]{ors})
Let $G$ be a locally compact group $G$. A \emph{weight inverse} is an element $\omega $  of $M(C^*_r(G))$ with $\|\omega \|\leq 1$ such that the following are satisfied:
\begin{itemize}
\item[(1)]{The maps
\[
C^*_r(G) \rightarrow C^*_r(G), x \mapsto x\omega 
\]
and
\[
C^*_r(G) \rightarrow C^*_r(G), x \mapsto \omega x
\]
have dense ranges;}
\item[(2)]{there is $\Omega  \in VN(G\times G)$ with $\|\Omega  \| \leq 1$ such that
\[
\omega  \otimes \omega = \Gamma(\omega ) \Omega.
\]}
\end{itemize}
\end{dfn}

Below, we highlight some properties of inverse weights that we will use further in the article.

\begin{lem}\label{mult}
The element $\Omega \in VN(G\times G)$ is a contractive right multiplier of $C^*_r(G)\otimes C_r^*(G)$, such that $\Gamma(x)\Omega\in M(C^*_r(G)\otimes C_r^*(G))$ for all $x\in C^*_r(G).$ Moreover, for any $u,v\in B_r(G)$, the element $\Omega (u\otimes v)\in (C_r^*(G)\otimes C_r^*(G))^*$, 
determined by 
\begin{equation}\label{omuv}
\begin{array}{ccc}
(\Omega (u\otimes v))(T):=(u\otimes v)(T\Omega ), & \text{for $T\in C_r^*(G)\otimes C_r^*(G)$,}
\end{array}
\end{equation}
extends to a bounded linear functional on $ M(C_r^*(G)\otimes C_r^*(G))$.
\end{lem}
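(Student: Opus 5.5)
The plan is to use the defining identity $\omega\otimes\omega=\Gamma(\omega)\Omega$ together with the density conditions in Definition~\ref{D:Weight}(1) and the density of $\{(1\otimes y)\Gamma(x)\}$ and $\{(y\otimes 1)\Gamma(x)\}$ recalled in Section~\ref{s:perliminaries}. Write $A=C_r^*(G)\otimes C_r^*(G)\simeq C_r^*(G\times G)$, realised faithfully and nondegenerately on $L^2(G\times G)$; then $M(A)$ is the idealiser of $A$ in $B(L^2(G\times G))$, and $\Omega\in VN(G\times G)$ acts there.

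\textbf{Step 1: $A\Omega\subseteq A$.} It suffices to check this on a dense set, since $T\mapsto T\Omega$ is bounded with norm at most $\|\Omega\|\le 1$ and $A$ is closed. For $x,y,z\in C_r^*(G)$,
\[
(y\otimes z)\,\Gamma(x\omega)\,\Omega=(y\otimes z)\Gamma(x)\Gamma(\omega)\Omega=(y\otimes z)\Gamma(x)(\omega\otimes\omega).
\]
Here $(y\otimes z)\Gamma(x)\in A$ because $\Gamma(x)\in M(A)$, and $\omega\otimes\omega\in M(C_r^*(G))\otimes M(C_r^*(G))\subseteq M(A)$, so the product lies in $A$. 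Elements of the form $(1\otimes y)\Gamma(x)$ span a dense subset of $A$. Since $x\mapsto x\omega$ has dense range, $\Gamma$ is continuous, and multiplication by $\Omega$ is bounded, we may replace $x$ by $x\omega$. Multiplying further on the left by $y'\otimes 1$ only shrinks the span, so I would handle this with an approximate unit: $(e_\alpha\otimes 1)T\to T$ for $T\in A$. Hence $\{(y'\otimes y)\Gamma(x\omega)\}$ has dense span in $A$, and $A\Omega\subseteq A$ follows. The operator $\Omega$ is then a right multiplier of $A$ with norm at most $\|\Omega\|\le1$.

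\textbf{Step 2: $\Gamma(x)\Omega\in M(A)$.} We need both $T\Gamma(x)\Omega\in A$ and $\Gamma(x)\Omega T\in A$ for $T\in A$. The first is immediate from Step~1, since $T\Gamma(x)\in A$. The second is the main obstacle, because $\Omega$ is only known to be a right multiplier. I would first treat $x=x'\omega$, where $\Gamma(x'\omega)\Omega=\Gamma(x')(\omega\otimes\omega)$ lies in $M(A)$ as a product of multipliers. For general $x$, approximate it in norm by elements $x'\omega$ using the dense range of $x\mapsto x\omega$. Then $\Gamma(x'\omega)\Omega\to\Gamma(x)\Omega$ in operator norm, and $M(A)$ is norm closed in $B(L^2(G\times G))$, which gives the claim.

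\textbf{Step 3: the functional.} For $u,v\in B_r(G)$ we have $u\otimes v\in A^*$ with $\|u\otimes v\|\le\|u\|\|v\|$. By Step~1, the formula $T\mapsto(u\otimes v)(T\Omega)$ defines a bounded functional on $A$ of norm at most $\|u\|\|v\|$. Every functional on $A$ extends uniquely to a strictly continuous functional on $M(A)$, via $A^{**}\supseteq M(A)$ or via a factorisation $\phi=a\cdot\psi$ by Cohen's theorem. Applying this to $\Omega(u\otimes v)$ gives a bounded extension to $M(A)$. One may also define the extension directly by $S\mapsto\lim_\alpha(u\otimes v)(Se_\alpha\Omega)$, with $(e_\alpha)$ an approximate unit of $A$. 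This extension is what makes expressions such as $\Omega(u\otimes v)(\Gamma(x))$ meaningful later.
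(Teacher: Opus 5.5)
Your proof is correct and follows the paper's own argument: the identity $(1\otimes y)\Gamma(x\omega)\Omega=(1\otimes y)\Gamma(x)(\omega\otimes\omega)\in A$, combined with the density of $C_r^*(G)\omega$ and of $\mathrm{span}\{(1\otimes y)\Gamma(x)\}$, gives $A\Omega\subseteq A$, and the functional then extends to $M(A)$ as every bounded functional on a C$^*$-algebra does. (Your detour through $y'\otimes 1$ and an approximate unit is harmless but unnecessary, since $(1\otimes y)\Gamma(x)$ already lies in $A$.) Your Step~2 is a useful addition: writing $\Gamma(x)\Omega$ as a norm limit of the multipliers $\Gamma(x')(\omega\otimes\omega)$ with $x'\omega\to x$ verifies $\Gamma(x)\Omega\in M(A)$, in particular the left-multiplier half $\Gamma(x)\Omega A\subseteq A$, which the paper's proof does not spell out.
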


\begin{proof}
For any  $x,y\in C^*_r(G)$, we have
$$
(1 \otimes y)\Gamma(x\omega )\Omega =(1 \otimes y)\Gamma(x)(\omega \otimes \omega )\in C_r^*(G)\otimes C_r^*(G).$$
Thus, by condition $(1)$ on $\omega$ (from Definition \ref{D:Weight}) and the fact that the set $\{(1 \otimes y)\Gamma(z): y,z \in C^*_r(G)\}$ spans a dense subset of $C_r^*(G)\otimes C_r^*(G)$, we have 
$$(C^*_r(G)\otimes C_r^*(G))\Omega \subseteq C^*_r(G)\otimes C_r^*(G).$$ 
Furthermore, this implies that $\Omega (u\otimes v)$ as defined by~\eqref{omuv} is a linear bounded functional on $C_r^*(G)\otimes C_r^*(G)$.
Thus it can be extended to a bounded linear functional on $M(C_r^*(G)\otimes C_r^*(G))$ as any bounded linear function on a C*-algebra extends (uniquely) to a bounded linear functional on its multiplier algebra. 
\end{proof}

\begin{lem}\label{l:ker(w-1)}
Let $G$ be a locally compact group with a weight inverse $\omega $. Then the kernels of $\omega $ and $\omega^*$ as  operators in $\cB(L^2(G))$, and kernels of $\Gamma(\omega )$ and $\Gamma(\omega )^*$ as operators in $\cB(L^2(G\times G))$ are trivial.   Subsequently, $\ker (\Omega )=\{0\}$.
\end{lem}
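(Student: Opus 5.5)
We first treat $\omega$ and $\omega^*$, then pass to $\Gamma(\omega)$ and $\Gamma(\omega)^*$ by a tensor argument, and finally deduce that $\ker\Omega=\{0\}$ from the factorisation $\omega\otimes\omega=\Gamma(\omega)\Omega$.

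\textbf{Kernels of $\omega$ and $\omega^*$.} Suppose $\omega\xi=0$ for some $\xi\in L^2(G)$. For every $x\in C^*_r(G)$ we then have $x\omega\xi=0$. By Definition~\ref{D:Weight}(1), the elements $x\omega$ are dense in $C^*_r(G)$ in norm, so $y\xi=0$ for all $y\in C^*_r(G)$. Since $\lambda$ is a nondegenerate representation of $C^*_r(G)$ on $L^2(G)$, this forces $\xi=0$.

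For $\omega^*$, we use the other density condition. If $\omega^*\xi=0$, then $(\omega x)^*\xi=x^*\omega^*\xi=0$ for all $x$. The set $\{(\omega x)^*\}$ is dense in $C^*_r(G)^*=C^*_r(G)$, so again $\xi=0$ by nondegeneracy.

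\textbf{Kernels of $\Gamma(\omega)$ and $\Gamma(\omega)^*$.} The natural route is to show that $\{(1\otimes y)\Gamma(x\omega)\}$ and $\{(y\otimes1)\Gamma(x\omega)\}$ span dense subsets of $C^*_r(G)\otimes C^*_r(G)$. Since $x\mapsto x\omega$ has dense range and $\Gamma$ is continuous, these sets have the same closed span as $\{(1\otimes y)\Gamma(z)\}$, which is dense by the fact recalled in Section~\ref{s:perliminaries}. If $\Gamma(\omega)\eta=0$ for $\eta\in L^2(G\times G)$, then $(1\otimes y)\Gamma(x)\Gamma(\omega)\eta=(1\otimes y)\Gamma(x\omega)\eta=0$ for all $x,y$. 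Density then gives $T\eta=0$ for all $T\in C^*_r(G)\otimes C^*_r(G)$. This algebra acts nondegenerately on $L^2(G\times G)$, being $C^*_r(G\times G)$, so $\eta=0$.

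For $\Gamma(\omega)^*$, take adjoints: if $\Gamma(\omega)^*\eta=0$, then $\Gamma(\omega x)^*(1\otimes y^*)\eta$... This ordering is awkward. Instead, use that the adjoint set $\{\Gamma(z)^*(1\otimes y)^*\}=\{\Gamma(z^*)(1\otimes y^*)\}$ also spans a dense subset, since $C^*_r(G)\otimes C^*_r(G)$ is self-adjoint. Then $\Gamma(x^*)\Gamma(\omega)^*$ is $\Gamma((\omega x)^*)$, and since $\omega x$ is dense, the products $(1\otimes y)\Gamma(\omega x)$ span a dense set by the same argument. Hence $\Gamma(\omega x)^*\eta=0$ for all $x$ gives $T^*\eta=0$ for all $T$ in a dense subset, so $\eta=0$. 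The main care in this step is keeping track of which side of $\Gamma(\omega)$ the dense-range condition is used on; each of the two conditions in Definition~\ref{D:Weight}(1) handles one of the two operators.

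\textbf{Kernel of $\Omega$.} The operator $\omega\otimes\omega$ is injective on $L^2(G)\otimes L^2(G)$: we have $\omega\otimes\omega=(\omega\otimes1)(1\otimes\omega)$, and each factor is injective. For instance, $\ker(\omega\otimes1)=\ker\omega\otimes L^2(G)=\{0\}$, which one sees by identifying $L^2(G\times G)$ with $L^2(G,L^2(G))$, or with Hilbert–Schmidt operators so that the factor acts as left composition. Finally, if $\Omega\eta=0$, then $(\omega\otimes\omega)\eta=\Gamma(\omega)\Omega\eta=0$, hence $\eta=0$. Note that this last step uses only the injectivity of $\omega\otimes\omega$; the injectivity of $\Gamma(\omega)$ is a separate claim of the lemma.
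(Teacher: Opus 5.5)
Your proof follows the paper's approach. The paper shows that $\omega$, $\omega^*$ and $\Gamma(\omega)$ have dense range, using condition (1) of Definition~\ref{D:Weight}, the density of the span of $\{\Gamma(x)(1\otimes y)\}$, and nondegeneracy, and then concludes that the adjoints are injective. You test the kernels directly, which is the same argument in dual form. Your check that $\omega\otimes\omega=(\omega\otimes 1)(1\otimes\omega)$ is injective fills in a step the paper leaves implicit when it deduces $\ker\Omega=\{0\}$.

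There is one slip, in the $\Gamma(\omega)^*$ step. The dense family has to be $\Gamma(\omega x)(1\otimes y)$, which is what the paper uses, not $(1\otimes y)\Gamma(\omega x)$ as you wrote.

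\begin{itemize}
\item For $T=\Gamma(\omega x)(1\otimes y)$ you get $T^*\eta=(1\otimes y^*)\Gamma(x^*)\Gamma(\omega)^*\eta=0$, as required.
\item For $T=(1\otimes y)\Gamma(\omega x)$ you get $T^*\eta=\Gamma(x^*)\Gamma(\omega)^*(1\otimes y^*)\eta$. The hypothesis $\Gamma(\omega)^*\eta=0$ does not control this vector; this is exactly the ordering problem you flagged.
\end{itemize}

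There is also a shorter route from the line you already have. You showed $\Gamma(x^*\omega^*)\eta=0$ for all $x$, and $\{x^*\omega^*\}=\{(\omega x)^*\}$ is norm dense in $C^*_r(G)$. By continuity, $\Gamma(z)\eta=0$ for every $z\in C^*_r(G)$. Hence $(1\otimes y)\Gamma(z)\eta=0$ for all $y,z$, and density of the span of these products gives $\eta=0$. With this correction the proof is complete.
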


\begin{proof}
Note that the set $\{x   \xi: x\in C^*_r(G), \xi \in L^2(G)\}$ is dense in $L^2(G)$.  Also, $\omega  C^*_r(G)$ is dense in $C^*_r(G)$ by condition $(1)$ of Definition \ref{D:Weight}. Subsequently, we get $\{ \omega  x   \xi: x\in C^*_r(G), \xi \in L^2(G)\}$ is dense in $L^2(G)$. Therefore, the closure of $\rang(\omega )$ is equal to $L^2(G)$ and hence $\ker \omega^*$ is trivial, as desired.

To prove the claim for $\Gamma(\omega )^*$, note that the linear span of $\{\Gamma(x)(1\otimes y): x,y \in C^*_r(G)\}$ is dense in $C^*_r(G \times G)$. Therefore, following the fact that $\omega C^*_r(G)$ is dense in $C^*_r(G)$, we have that  the linear span of
\[
\{\Gamma(\omega x)(1\otimes y) \eta: x,y\in C^*_r(G), \eta \in L^2(G\times G)\}
\]
is dense in $L^2(G\times G)$. Subsequently, $\Gamma(\omega )$ has a dense range.

Similarly, as $\{(x\omega )^*: x\in C_r^*(G)\}$  is dense in $C_r^*(G)$ we get that the range of $\omega^*$ is dense in $L^2(G)$ and subsequently $\ker\omega =\{0\}$. The arguments for $\ker\Gamma(\omega )=\{0\}$ are similar.
Now if $\Omega $ has a non-zero kernel, then $\Gamma(\omega )\Omega =\omega  \otimes \omega $ cannot have a trivial kernel, a contradiction.
\end{proof}
\begin{rem}
    In \cite{gt} a weight inverse was defined equivalently as $\omega\in VN(G)$ which satisfies condition (2) of Definition \ref{D:Weight} and the condition $\ker\omega=\ker\omega^*=\{0\}$. Hence, by  Lemma \ref{l:ker(w-1)}, a weight inverse from Definition \ref{D:Weight}  is a weight inverse in the sense of \cite{gt}. This will be used when referring to \cite{gt} for properties of $\omega$ and related algebras. 
\end{rem}
\begin{proposition}\label{p:weights-product}
Let $G_1$ and $G_2$ be locally compact groups and $\omega _1$ and $\omega _2$ be two weight inverses on $G_1$ and $G_2$ respectively. Then $\omega _1 \otimes \omega _2$ is a weight inverse on $G_1 \times G_2$.
\end{proposition}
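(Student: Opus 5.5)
We verify the conditions of Definition \ref{D:Weight} for $\omega:=\omega_1\otimes\omega_2$ directly, using the identifications $C^*_r(G_1\times G_2)\simeq C^*_r(G_1)\otimes C^*_r(G_2)$ and $VN(G_1\times G_2)\simeq VN(G_1)\bar\otimes VN(G_2)$, under which $\lambda^{G_1\times G_2}_{(s,t)}=\lambda^{G_1}_s\otimes\lambda^{G_2}_t$.

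\textbf{Membership and norm.} We first check that $\omega\in M(C^*_r(G_1\times G_2))$ and $\|\omega\|\le 1$. The minimal tensor product of multipliers is a multiplier: $M(A)\otimes M(B)\subseteq M(A\otimes B)$, since $(m\otimes n)(a\otimes b)=ma\otimes nb\in A\otimes B$ and elementary tensors span a dense subspace. Moreover $\|\omega_1\otimes\omega_2\|=\|\omega_1\|\|\omega_2\|\le 1$.

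\textbf{Condition (1).} The range of $x\mapsto x\omega$ contains all $x_1\omega_1\otimes x_2\omega_2$ with $x_i\in C^*_r(G_i)$. By density of $C^*_r(G_i)\omega_i$ in $C^*_r(G_i)$ and continuity of $\otimes$ on elementary tensors, these approximate every $y_1\otimes y_2$. Since finite sums of elementary tensors are dense in the minimal tensor product, the range is dense. The argument for $x\mapsto\omega x$ is symmetric.

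\textbf{Condition (2).} This is the only step needing care, namely the bookkeeping of tensor legs. Let $\Sigma$ denote the flip of the middle factors,
\[
VN(G_1)\bar\otimes VN(G_2)\bar\otimes VN(G_1)\bar\otimes VN(G_2)\to VN(G_1)\bar\otimes VN(G_1)\bar\otimes VN(G_2)\bar\otimes VN(G_2),
\]
which is a $*$-isomorphism implemented by a unitary. Checking on generators $\lambda_{(s,t)}$ and extending by normality gives
\[
\Gamma_{G_1\times G_2}=\Sigma^{-1}\circ(\Gamma_{G_1}\otimes\Gamma_{G_2}).
\]
(The inverse $\Sigma^{-1}$ is present because $\Gamma_{G_1}\otimes\Gamma_{G_2}$ lands in the leg order $G_1,G_1,G_2,G_2$.) Hence
\[
\omega\otimes\omega=\Sigma^{-1}(\omega_1\otimes\omega_1\otimes\omega_2\otimes\omega_2)=\Sigma^{-1}\big((\Gamma_1(\omega_1)\Omega_1)\otimes(\Gamma_2(\omega_2)\Omega_2)\big)=\Gamma(\omega)\,\Sigma^{-1}(\Omega_1\otimes\Omega_2).
\]
Set $\Omega:=\Sigma^{-1}(\Omega_1\otimes\Omega_2)\in VN((G_1\times G_2)\times(G_1\times G_2))$. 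Then $\|\Omega\|=\|\Omega_1\|\|\Omega_2\|\le1$, because $\Sigma^{-1}$ is isometric and the norm is multiplicative on elementary tensors in the spatial tensor product.

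The multiplicativity $(\Gamma_1(\omega_1)\Omega_1)\otimes(\Gamma_2(\omega_2)\Omega_2)=(\Gamma_1(\omega_1)\otimes\Gamma_2(\omega_2))(\Omega_1\otimes\Omega_2)$ holds because these are operators on a Hilbert space tensor product. Here $\Gamma_i(\omega_i)$ is understood through the extension of $\Gamma$ to $M(C^*_r(G_i))\subset VN(G_i)$.
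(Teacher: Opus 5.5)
Your proof is correct and follows the same route as the paper. You write $\Gamma_{G_1\times G_2}$ as the middle-leg flip of $\Gamma_{G_1}\otimes\Gamma_{G_2}$, take $\Omega$ to be the correspondingly flipped $\Omega_1\otimes\Omega_2$, and obtain the density condition from elementary tensors in $C^*_r(G_1)\otimes C^*_r(G_2)$. You also explicitly check $\omega_1\otimes\omega_2\in M(C^*_r(G_1\times G_2))$ and the bounds $\|\omega\|,\|\Omega\|\le 1$, which the paper leaves implicit, and you track the direction of the flip more carefully than the paper does.
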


\begin{proof}
Let $\Gamma^k$ be the co-multiplication of $VN(G_k)$ for $k=1,2$. Then for co-multiplication of $VN(G_1 \times G_2)$, denoted by $\Gamma^{12}$, we have
\[
\Gamma^{12} (x \otimes y) = (\id \otimes \Sigma \otimes \id)( \Gamma^1(x) \otimes \Gamma^2(y))
\]
where $x \in VN(G_1)$ and $y \in VN(G_2)$ and $\Sigma$ is the flip map
\[
\Sigma: VN(G_2) \bar{\otimes} VN(G_1) \rightarrow VN(G_1) \bar{\otimes} VN(G_2).
\]
Hence
\begin{eqnarray*}
( \omega_1 \otimes \omega_2) \otimes (\omega_1 \otimes \omega_2)
&=& \id \otimes \Sigma \otimes \id \left( \omega_1 \otimes \omega_1   \otimes \omega_2 \otimes \omega_2 \right)\\
&=& \id \otimes \Sigma \otimes \id \left( \Gamma^1(\omega_1) \otimes \Gamma^2(\omega_2 )\right)  \id \otimes \Sigma \otimes \id (\Omega _1 \otimes \Omega _2) \\
&=& \Gamma^{12} \left( \omega_1 \otimes \omega_2 \right)    \id \otimes \Sigma \otimes \id (\Omega_1 \otimes \Omega_2).
\end{eqnarray*}
Finally, we note that the maps $x \mapsto (\omega_1 \otimes \omega_2) x$ and $x \mapsto x (\omega_1 \otimes \omega_2)$, $x \in C^*_r(G_1\times G_2)$, have dense ranges as $C^*_r(G_1) \odot C^*_r(G_2)$ is dense in $C^*_r(G_1\times G_2)$. Hence,  both conditions of Definition \ref{D:Weight} satisfy for $\omega_1\otimes \omega_2$ so it is a weight inverse on $G_1\times G_2$.
\end{proof}


\section{Beurling-Fourier algebras}\label{s:Beurling-Fourier-Stieltjes}

We can use weight inverse to construct the ``non-commutative" analogue of Beurling algebras on locally compact groups, see for example \cite{gllst, LeSa, lst, ors}.

Recall first the Fourier algebra $A(G)$ of $G$. It is   the unique pre-dual of $VN(G)$ and can be identified with the space of functions on $G$:  $$A(G)=\{g\ast\check h\;|\; g,h\in L^2(G)\}\subset C_0(G),$$ where $\check h(s)=h(s^{-1})$, $s\in G$, and 
$$\xi\ast\check \eta(s)=\int \xi(t)\eta(s^{-1}t)dt=\langle\lambda_s\eta,\bar\xi\rangle;$$
$A(G)$ becomes a 
 commutative Banach algebra with respect to the pointwise multiplication and the norm given by
$$\|f\|_{A(G)}=\inf\|\xi\|_2\|\eta\|_2,$$ where the infimum is taken over all possible decomposition $f=\xi\ast\check \eta$, see for example  \cite{kaniuth-lau}. The duality between $VN(G)$ and $A(G)$ is given by
$$\langle u,T\rangle=(T \xi,\eta)$$ for $T\in VN(G)$ and $u(s)=(\lambda_s\xi,\eta)=(\bar \eta\ast\check \xi)(s)\in A(G);$ here and through the rest of the paper we use $(\cdot,\cdot)$ to denote the inner product on a Hilbert space and we keep notation $\langle\cdot,\cdot\rangle$ for duality pairing between $\mathcal M$ and $\mathcal M^*$. 

For $T\in VN(G)$ and $f\in A(G),$ we let $Tf\in A(G)$ be given by
$$
\begin{array}{ccc}
\langle Tf, R\rangle:=\langle f,RT\rangle, &\text{for $R\in VN(G).$}
\end{array}
$$
The assignment $(T,f)\mapsto Tf$ turns $A(G)$ into a left $VN(G)$-module.

\begin{dfn}\label{d:Beurling-Fourier} 
Let $G$ be a locally compact group and $\omega$ a weight inverse on it.
The corresponding \emph{Beurling-Fourier algebra} is defined as 
$$A(G, \omega) := \omega A(G)=\{\omega u : u \in A(G)\},$$ with 
$$\norm{\omega u}_{A(G, \omega)}:=\norm{u}_{A(G)} \ \ \ (\forall u \in A(G)).$$
\end{dfn}
We have, for each pair $\omega u, \omega v \in A(G, \omega)$,
$$
(\omega u)( \omega v)= \omega  \Gamma_*(\Omega (u \otimes v))
$$
and $\|\cdot\|_{A(G,\omega)}$ is a norm making $A(G,\omega)$  a commutative Banach algebra. Moreover, the dual of $A(G,\omega)$ is $VN(G)$, 
where the duality is given by 
\[
\langle \omega u, x\rangle_\omega  := \langle u, x\rangle, x \in VN(G), u \in A(G),
\]
(see e.g. \cite[Proposition 2.3]{gt}).
In particular, for each $u \in A(G)$ and $s \in G$ we get
\begin{equation}\label{eq:point-evaluation}
(\omega u)(s) = \langle \omega u, \lambda_s \rangle = \langle \omega u, \lambda_s \omega \rangle_\omega .
\end{equation}
It follows from \cite{ors} that $A(G,\omega)$ is a completely contractive commutative Banach algebra (see also
\cite{ gllst,gt, LeSa, lst}). Furthermore, the spectrum of $A(G,\omega)$ has been studied in \cite{gllst, gt, lst}.
The following has been proved in \cite{gllst}. 
\begin{theorem}\label{l:G->D(A(G,w))}
Let $G$ be a locally compact group with a weight inverse $\omega $. Then  there is a continuous injective embedding $\iota$ from topological space $G$  into the Gelfand spectrum of $A(G,\omega )$.
\end{theorem}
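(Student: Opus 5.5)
The embedding will be $\iota(s)=\varphi_s$, where $\varphi_s(\omega u):=\langle \omega u,\lambda_s\omega\rangle_\omega=\langle u,\lambda_s\omega\rangle=(\omega u)(s)$, as in \eqref{eq:point-evaluation}. So $\varphi_s$ is evaluation at $s$ of the function $\omega u$, viewed in $A(G)$, where $\omega u$ is the module action of $VN(G)$ on $A(G)$. The plan has four steps: show $\varphi_s$ is a bounded linear functional, that it is multiplicative and nonzero, that $s\mapsto\varphi_s$ is injective, and that it is continuous into the Gelfand topology.

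\textbf{Boundedness.} We have $|\varphi_s(\omega u)|\le\|\lambda_s\omega\|\,\|u\|_{A(G)}\le\|\omega u\|_{A(G,\omega)}$, so $\varphi_s$ is bounded with norm at most one.

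\textbf{Multiplicativity.} This is the main step. Using the product formula $(\omega u)(\omega v)=\omega\Gamma_*(\Omega(u\otimes v))$, we compute
\[
\varphi_s\bigl((\omega u)(\omega v)\bigr)=\langle \Omega(u\otimes v),\Gamma(\lambda_s\omega)\rangle=\langle u\otimes v,(\lambda_s\otimes\lambda_s)\Gamma(\omega)\Omega\rangle .
\]
By condition (2) of Definition~\ref{D:Weight}, $\Gamma(\omega)\Omega=\omega\otimes\omega$, so the right-hand side equals
\[
\langle u,\lambda_s\omega\rangle\langle v,\lambda_s\omega\rangle=\varphi_s(\omega u)\,\varphi_s(\omega v).
\]
The obstacle is justifying this pairing manipulation, since $\Omega$ lies only in $VN(G\times G)$. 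Here we use Lemma~\ref{mult} together with normality of the pairing on $VN(G\times G)$: all objects live in $VN$, and $\Omega(u\otimes v)$ pairs normally with $VN(G\times G)$ via $T\mapsto (u\otimes v)(T\Omega)$. The identity $\langle \Gamma_*(F),x\rangle=\langle F,\Gamma(x)\rangle$ for $F\in A(G\times G)$ and $x\in VN(G)$ then applies, taking $x=\lambda_s\omega$.

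\textbf{Nonvanishing and injectivity.} If $\varphi_s=0$, then $\lambda_s\omega=0$ in $VN(G)=A(G)^*$, so $\omega=0$, contradicting Lemma~\ref{l:ker(w-1)}. Hence $\varphi_s\neq 0$ and $\varphi_s$ lies in the spectrum. For injectivity, $\varphi_s=\varphi_t$ gives $(\lambda_s-\lambda_t)\omega=0$ as an element of $VN(G)$. Since $\omega$ has dense range by Lemma~\ref{l:ker(w-1)}, this forces $\lambda_s=\lambda_t$, and therefore $s=t$.

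\textbf{Continuity.} The Gelfand topology on the spectrum is the weak$^*$ topology, so it suffices to show that for each fixed $u\in A(G)$ the map $s\mapsto\langle u,\lambda_s\omega\rangle=\langle \omega u,\lambda_s\rangle=(\omega u)(s)$ is continuous. This holds because $\omega u\in A(G)\subset C_0(G)$.
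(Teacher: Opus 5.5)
Your proof is correct and is the standard argument behind the cited result: the paper gives no proof of its own (it refers to \cite{gllst}), but the map it later uses is exactly yours, $s\mapsto\lambda_s\omega\in VN(G)=A(G,\omega)^*$, i.e.\ point evaluation as in \eqref{eq:point-evaluation}. One simplification is available: the product in $A(G,\omega)$ is the pointwise product inherited from $A(G)$ and $\varphi_s(\omega u)=(\omega u)(s)$, so multiplicativity is immediate, and your computation with $\Gamma(\omega)\Omega=\omega\otimes\omega$ (including the appeal to Lemma~\ref{mult}) is valid but unnecessary.
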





The following statement shows that the completely contractive Banach algebra structure on $A(G,\omega)$ also behaves well with respect to the operator space projective tensor product.

\begin{proposition}\label{p:A(w1xw2)}
Let $G_1$ and $G_2$ be locally compact groups and $\omega _1$ and $\omega _2$ be two weight inverses on $G_1$ and $G_2$ respectively.  Then $A(G_1 \times G_2, \omega _1 \otimes \omega _2)$ is completely isometrically isomorphic to $A(G_1, \omega _1) \hat{\otimes} A(G_2,\omega _2)$ as completely contractive Banach algebras.
\end{proposition}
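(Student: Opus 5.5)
The plan is to reduce everything to the unweighted statement $A(G_1\times G_2)\cong A(G_1)\hat\otimes A(G_2)$, completely isometrically. This is the classical result of Effros--Ruan: it is dual to $VN(G_1\times G_2)=VN(G_1)\bar\otimes VN(G_2)$ together with $(M_{1*}\hat\otimes M_{2*})^*=M_1\bar\otimes M_2$. We then transport it along the maps $u\mapsto \omega u$.

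By Definition \ref{d:Beurling-Fourier}, the map $\Phi_k\colon A(G_k)\to A(G_k,\omega_k)$, $u\mapsto\omega_k u$, is an isometric bijection. It is a complete isometry when $A(G_k,\omega_k)$ carries the operator space structure predual to $VN(G_k)$ via $\langle\cdot,\cdot\rangle_{\omega_k}$: indeed $\Phi_k^*$ is the identity on $VN(G_k)$ under the two dualities. Its injectivity comes from Lemma \ref{l:ker(w-1)}, since $\omega_k u=0$ gives $\langle u,x\omega_k\rangle=0$ for all $x$, and $VN(G_k)\omega_k$ is weak$^*$ dense because $\ker\omega_k^*=0$.

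The same reasoning for $G_1\times G_2$, using Proposition \ref{p:weights-product}, makes
\[
\Phi_{12}\colon A(G_1\times G_2)\to A(G_1\times G_2,\omega_1\otimes\omega_2)
\]
a complete isometry. We then define
\[
\Theta=\Phi_{12}\circ\Psi\circ(\Phi_1^{-1}\otimes\Phi_2^{-1}),
\]
where $\Psi\colon A(G_1)\hat\otimes A(G_2)\to A(G_1\times G_2)$ is the Effros--Ruan isomorphism, $\Psi(u\otimes v)=u\times v$. Since $\Phi_1^{-1}\otimes\Phi_2^{-1}$ is a complete isometry of projective tensor products (functoriality of $\hat\otimes$ applied to complete isometric isomorphisms), $\Theta$ is a complete isometric isomorphism. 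On elementary tensors it gives $\Theta(\omega_1u\otimes\omega_2v)=(\omega_1\otimes\omega_2)(u\times v)$, which as a function equals $(\omega_1u)\times(\omega_2v)$. To see this, pair with $\lambda_{(s,t)}=\lambda_s\otimes\lambda_t$ using \eqref{eq:point-evaluation} and the module action. So $\Theta$ is the natural map identifying $f\otimes g$ with $f\times g$.

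It remains to check that $\Theta$ is multiplicative. Both algebras carry pointwise multiplication of functions on $G_1\times G_2$: the elements of $A(G,\omega)$ are functions on $G$, and the product $\omega\Gamma_*(\Omega(u\otimes v))$ is the pointwise product (this is the embedding of Theorem \ref{l:G->D(A(G,w))}, with point evaluations as characters). On elementary tensors,
\[
\Theta\bigl((f\otimes g)(f'\otimes g')\bigr)=(ff')\times(gg')=(f\times g)(f'\times g').
\]
Bilinearity, continuity, and density of the algebraic tensor product then finish the argument. One can alternatively verify the product formula directly from the computation in Proposition \ref{p:weights-product}, which exhibits $\Omega_{12}=(\id\otimes\Sigma\otimes\id)(\Omega_1\otimes\Omega_2)$.

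I expect the main obstacle to be making precise that the operator space structure on $A(G,\omega)$ is the one induced by the complete isometry $\Phi$, equivalently the predual structure of $VN(G)$ via $\langle\cdot,\cdot\rangle_\omega$, as used in \cite{ors}. Related to this is confirming that the multiplication is pointwise, so that the product check reduces to functions.
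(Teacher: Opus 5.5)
Your proposal is correct and is essentially the paper's argument. Both identify $A(G_1\times G_2,\omega_1\otimes\omega_2)$ and $A(G_1,\omega_1)\hat{\otimes}A(G_2,\omega_2)$ as preduals of $VN(G_1)\bar{\otimes}VN(G_2)$ via $(\omega_1\otimes\omega_2)(u_1\times u_2)\leftrightarrow(\omega_1u_1)\otimes(\omega_2u_2)$ and then check multiplicativity; the paper does the latter by a $\Gamma_*$ computation with $\Omega_{12}=(\id\otimes\Sigma\otimes\id)(\Omega_1\otimes\Omega_2)$, while your use of the pointwise products is equally valid. One small slip: weak$^*$ density of $VN(G_k)\omega_k$ requires $\ker\omega_k=\{0\}$ (right support of $\omega_k$ equal to $1$), not $\ker\omega_k^*=\{0\}$, though Lemma~\ref{l:ker(w-1)} supplies both.
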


\begin{proof}
Note that both of the operator spaces $A(G_1 \times G_2, \omega _1 \otimes \omega _2)$ and $A(G_1, \omega _1) \hat{\otimes} A(G_2,\omega _2)$ are preduals of $VN(G_1)\bar{\otimes}VN(G_2)$. Therefore, they are completely isometrically isomorphic through the mapping
\[
\iota: (\omega _1 \otimes \omega _2)(u_1\otimes u_2) \mapsto (\omega _1u_1) \otimes (\omega _2u_2)
\]
where $u_k \in A(G_k)$ for $k=1,2$.
So we only need to prove that this mapping is an algebra homomorphism. Note that for each pair $u_k, v_k \in A(G_k)$, $k=1,2$,
\[
\left((\omega _1 u_1) \otimes (\omega _2 u_2), (\omega _1 v_1) \otimes (\omega _2  v_2)\right) \mapsto
\left( \omega _1  \Gamma^1_*(\Omega _1 (u_1 \otimes v_1))\right) \otimes \left(\omega_2  \Gamma^2_*(\Omega_2 (u_2 \otimes v_2)) \right)
\]
is the multiplication in $A(G_1, \omega _1) \hat{\otimes} A(G_2,\omega _2)$. Therefore, we get

\begin{eqnarray*}
&& \iota( (\omega _1  \otimes \omega _2 ) \Gamma^{12}_* \left( (\id \otimes\Sigma \otimes\id) (\Omega _1 \otimes\Omega _2) (u_1 \otimes u_2) \otimes (v_1 \otimes v_2) \right)\\
  &&= \iota( (\omega _1  \otimes \omega _2 ) (\id \otimes \Sigma \otimes \id)( \Gamma^1_*  \otimes \Gamma^2_* ) \left( (\id \otimes\Sigma \otimes\id) (\Omega _1 \otimes\Omega _2) (u_1 \otimes u_2) \otimes (v_1 \otimes v_2) \right)\\
  &&= \iota( (\omega _1  \otimes \omega _2 ) (  \Gamma^1_*  \otimes \Gamma^2_* )\left( (\Omega _1 \otimes\Omega _2) (u_1 \otimes v_1) \otimes (u_2  \otimes v_2) \right)\\
    &&=  \omega _1  \Gamma^1_*( \Omega _1 (u_1 \otimes v_1))) \otimes \omega _2 (   \Gamma^2_*  \left(  \Omega _2  (u_2  \otimes v_2) \right).
\end{eqnarray*}

\end{proof}

\section{Reduced Beurling-Fourier-Stieltjes spaces}

In this section we define a weighted analog of the reduced Fourier-Stieltjes algebra. 

For $u\in B_r(G)$ and $x\in M(C_r^*(G))$, we let $xu$ and $ux$ in $B_r(G)$ be given by
$$\langle xu, y\rangle:=\langle u, yx\rangle \ \ \text{and}\ \ \langle ux, y\rangle:=\langle u, xy\rangle \ \ \ (y\in C_r^*(G)).$$

\begin{dfn}\label{d:Beurling-FS}
The reduced Beurling-Fourier-Stieltjes  space of $G$ is defined to be the Banach space
\[
B_r(G,\omega ):=\{ \omega u: u \in B_r(G)\}
\]
with $\norm{\omega u}_{B_r(G, \omega )}:=\norm{u}_{B_r(G)}$ for each $u \in B_r(G)$.
\end{dfn}
    If $u\in B_r(G)$, then $u(t)=(\pi(t)\xi,\eta)$, $t\in G$, for a non-degenerate representation $\pi$ of $G$ weakly associated to $\lambda$, and then  $(\omega u)(t)=(\pi(t)\pi(\omega)\xi,\eta)$, $t\in G$.  
We note also that the norm is well-defined follows from the density of the map $C_r^*(G)\to C_r^*(G)$, $x\mapsto x\omega $. Also, 
we have
$A(G, \omega )$ is a subset of $A(G)$ and $B_r(G, \omega )$ is a subset of $B_r(G)$. The following shows that equality for the corresponding algebras occurs only in the trivial cases. 

\begin{cor}\label{c:omega-invertible}
Let $G$ be a locally compact group with a weight inverse $\omega $. Then  the following are equivalent.
\begin{itemize}
\item[(i)]{The inclusion $A(G, \omega )$ into $A(G)$ is surjective.}
\item[(ii)]{The inclusion $B_r(G, \omega )$ into $B(G)$ is surjective.}
\item[(iii)]{ $\omega $ is invertible in $M(C^*_r(G))$.}
\item[(iv)]{$\omega $ is invertible in $VN(G))$.}
\end{itemize}
\end{cor}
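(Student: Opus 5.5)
The plan is to prove the cycle (iii)$\Rightarrow$(iv)$\Rightarrow$(i)$\Rightarrow$(iii), and then attach (ii) to it via (iii)$\Rightarrow$(ii)$\Rightarrow$(iv). (Statement (ii) should presumably read $B_r(G)$; with $B(G)$ as written, the implication (iii)$\Rightarrow$(ii) can only hold when $B_r(G)=B(G)$. I will treat it as $B_r(G)$.)

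The implication (iii)$\Rightarrow$(iv) is immediate, since $M(C^*_r(G))\subseteq VN(G)$ as unital $C^*$-algebras. For (iv)$\Rightarrow$(i), take $u\in A(G)$; then $\omega^{-1}u\in A(G)$ because $A(G)$ is a $VN(G)$-module, so $u=\omega(\omega^{-1}u)\in A(G,\omega)$. For (iii)$\Rightarrow$(ii), do the same in $B_r(G)$: the action of $M(C^*_r(G))$ on $B_r(G)$ is a module action, so $u=\omega(\omega^{-1}u)$ for every $u\in B_r(G)$.

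For (i)$\Rightarrow$(iii)/(iv), use the closed graph or open mapping theorem. The map $T\colon A(G)\to A(G)$, $u\mapsto \omega u$, is bounded with $\|T\|\le\|\omega\|\le 1$. It is injective: if $\omega u=0$, then $\langle u, x\omega\rangle=0$ for all $x\in C^*_r(G)$, and since $x\mapsto x\omega$ has dense range in $C^*_r(G)$, which is weak$^*$ dense in $VN(G)$, we get $u=0$. If (i) holds, $T$ is also surjective, so it is invertible by the open mapping theorem. Its adjoint $T^*\colon VN(G)\to VN(G)$ is $R\mapsto R\omega$, right multiplication by $\omega$, and it is then an invertible bounded map. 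Put $a=(T^*)^{-1}(1)\in VN(G)$, so $a\omega=1$. Then $\omega$ is bounded below and, by Lemma~\ref{l:ker(w-1)}, has dense range, so $\omega$ is invertible in $B(L^2(G))$ with inverse $a$; since $a\in VN(G)$, this gives (iv).

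The same argument with $B_r(G)=C^*_r(G)^*$ handles (ii). The map $u\mapsto\omega u$ on $B_r(G)$ has adjoint $y\mapsto y\omega$ on $C^*_r(G)^{**}$, whose restriction to $C_r^*(G)$ is right multiplication. Surjectivity on $B_r(G)$ makes this adjoint invertible on the bidual. This shows that $\omega$ is left invertible in $C^*_r(G)^{**}$, and hence invertible there by Lemma~\ref{l:ker(w-1)} applied in the universal representation, or via $\omega^*$ in the same way.

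To get (iii), one must move from invertibility in $VN(G)$ or in the bidual to invertibility in $M(C^*_r(G))$. I expect this to be the main obstacle. The route I would try: $\omega^{-1}$ lies in the $C^*$-algebra $M(C^*_r(G))$ computed inside $B(L^2(G))$ (or inside $C^*_r(G)^{**}$), and $C^*$-algebras are inverse-closed. Hence if $\omega$ is invertible in $VN(G)$, its inverse lies in $C^*(\omega,1)\subseteq M(C^*_r(G))$. This uses spectral permanence: $\omega^*\omega$ invertible in $VN(G)$ implies $(\omega^*\omega)^{-1}\in C^*(\omega^*\omega,1)$, and then $\omega^{-1}=(\omega^*\omega)^{-1}\omega^*$. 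So (iv)$\Rightarrow$(iii) follows, closing all the equivalences. The remaining care is identifying $M(C^*_r(G))$ as a unital $C^*$-subalgebra of $VN(G)$ containing $1$, which holds by the faithful nondegenerate representation description recalled in Section~\ref{s:perliminaries}.
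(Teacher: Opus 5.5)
Your proof is correct. Your reading of (ii) with $B_r(G)$ in place of $B(G)$ is the right one: the paper's own proof of (ii)$\Rightarrow$(iii) works with $\bm_\omega$ on $B_r(G)$, and with $B(G)$ the implication (iii)$\Rightarrow$(ii) fails already for $\omega=1$ on a non-amenable $G$.

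For (ii) your argument is essentially the paper's: the adjoint is $x\mapsto x\omega$ on $C^*_r(G)^{**}$, this gives a left inverse, dense range upgrades it to a two-sided inverse, and inverse-closedness of $M(C^*_r(G))$ finishes. For (i)$\Leftrightarrow$(iii)$\Leftrightarrow$(iv) the paper simply cites \cite[Proposition 2.6]{gt}, whereas you give a self-contained version of the same adjoint argument on $A(G)$ and $VN(G)$.
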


\begin{proof}
(i) $\Leftrightarrow$(iii)$\Leftrightarrow$(iv) follows from \cite[Proposition 2.6]{gt}. (iii)$\Rightarrow$(ii) is trivial.  

(ii)$\Rightarrow$(iii). Suppose that the map $\bm_{\omega }: B_r(G) \rightarrow B_r(G)$, $u \mapsto \omega u$, is a bijection, that is $\bm_{\omega}$ is invertible. Then so is
\[
\bm_{\omega }^*: C^*_r(G)^{**} \rightarrow C^*_r(G)^{**}
\]
which is indeed given by $\bm_{\omega }^*(x) = x \omega $ ($\forall x \in C^*_r(G)^{**}$). 
But since $C^*_r(G)^{**}$ has an identity, there is some $\omega^{-1}  \in C^*_r(G)^{**}$ so that $\bm_{\omega }^*(\omega^{-1} ) = 1$ i.e. $\omega^{-1}  \omega  = 1$. This implies that $\omega\omega^{-1}(\omega x)=\omega x$ for any $x\in C_r^*(G)$. However, $\omega (C_r^*(G))$ is dense in $C_r^*(G)$ and $C_r^*(G)$ is weak*-dense in $C_r^*(G)^{**}$, showing that $\omega\omega^{-1}=1$. As $M(C^*_r(G))$ is a  C*-subalgebra of $C^*_r(G)^{**}$ and  C$^*$-algebras are inverse closed, $\omega^{-1}\in M(C_r^*(G))$.



\end{proof}

 We call a Banach algebra  $\cA$ \emph{faithful} if it satisfies the following property: for $b\in \cA$, if $aba' = 0$ for all $a, a' \in \cA$, then $b = 0$.

\begin{theorem}\label{t:main}
Let $G$ be a locally compact group and $\omega $ be a weight inverse on it. Then $B_r(G, \omega )$ is a completely contractive faithful Banach algebra with pointwise multiplication. In addition, $A(G, \omega )$ is an ideal in $B_r(G, \omega )$.
\end{theorem}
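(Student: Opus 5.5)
Throughout, the basic object is the identity $(\omega u)(\omega v)=\omega\Gamma^*(\Omega(u\otimes v))$, now for $u,v\in B_r(G)$. By Lemma \ref{mult}, $\Omega(u\otimes v)$ is a bounded functional on $C_r^*(G)\otimes C_r^*(G)$, hence on its multiplier algebra. So $w:=\Gamma^*(\Omega(u\otimes v))\in B_r(G)$ is well defined by $\langle w,x\rangle=\langle u\otimes v,\Gamma(x)\Omega\rangle$, with $\|w\|\le\|u\|\|v\|$ because $\|\Omega\|\le 1$. I then check that the pointwise product of the functions $\omega u$ and $\omega v$ equals $\omega w$. Pair both sides with $y\in C_r^*(G)$:
\[
\langle (\omega u)(\omega v),y\rangle=\langle u\otimes v,\Gamma(y)(\omega\otimes\omega)\rangle=\langle u\otimes v,\Gamma(y)\Gamma(\omega)\Omega\rangle=\langle u\otimes v,\Gamma(y\omega)\Omega\rangle=\langle \omega w,y\rangle .
\]
This uses \eqref{eq:convolution}, condition (2) of Definition \ref{D:Weight}, and multiplicativity of the extended $\Gamma$ on $M(C_r^*(G))$.

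\textbf{Algebra and complete contractivity.} The computation shows that $B_r(G,\omega)$ is closed under pointwise multiplication and that $\|\omega u\cdot\omega v\|_{\omega}\le\|\omega u\|_\omega\|\omega v\|_\omega$. For complete contractivity, give $B_r(G,\omega)$ the operator space structure transported from $B_r(G)=C_r^*(G)^*$ via $u\mapsto\omega u$. The multiplication then corresponds to the map
\[
B_r(G)\hat\otimes B_r(G)\to (C_r^*(G)\otimes C_r^*(G))^*\to B_r(G),\qquad u\otimes v\mapsto u\otimes v\mapsto \Gamma^*(\Omega(u\otimes v)).
\]
The first arrow is the canonical complete contraction. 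The second is the adjoint of $x\mapsto\Gamma(x)\Omega$, a complete contraction into $M(C_r^*(G)\otimes C_r^*(G))$, composed with the restriction of functionals to the multiplier algebra. The technical point is that the unique extension of a functional to the multiplier algebra, $(A)^*\to M(A)^*$, is completely contractive; this holds because it is the restriction of the bidual map $A^{**}{}^*\supset$ to $M(A)\subset A^{**}$.

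\textbf{$A(G,\omega)$ is an ideal.} If $u\in A(G)$ and $v\in B_r(G)$, then $\Gamma^*(\Omega(u\otimes v))$ must lie in $A(G)$. Since $A(G)\hat\otimes B_r(G)$ is carried by the normal extension, I would argue as follows. The functional $u\otimes v$ is normal in the first leg, and $\Omega\in VN(G\times G)$. The product $(u\otimes v)(\,\cdot\,\Omega)$ then extends to $VN(G)\bar\otimes C_r^*(G)^{**}$ and is normal in the first variable. Composing with the normal map $\Gamma$ into $VN(G)\bar\otimes VN(G)$ and using that $\Gamma^*(A(G)\otimes B_r(G))\subset A(G)$, i.e.\ $A(G)$ is an ideal in $B_r(G)$, one should be able to conclude. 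A cleaner route is approximation: $A(G)\cap C_c$-type elements, or use that $A(G)$ is a closed ideal in $B_r(G)$ and that $A(G,\omega)$ is dense in $\omega A(G)$. Concretely, approximate $u$ by $u\,e$ with $e\in A(G)$ compactly supported, and conclude from norm closedness of $A(G)$ in $B_r(G)$. I expect this normality/closedness step to be the main obstacle.

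\textbf{Faithfulness.} Suppose $b=\omega v$ satisfies $aba'=0$ for all $a,a'\in B_r(G,\omega)$. Take $a,a'\in A(G,\omega)$. By Theorem \ref{l:G->D(A(G,w))}, for every $s\in G$ the point evaluation at $s$ is a nonzero character of $A(G,\omega)$. Choose $a$ with $a(s)\ne0$; then $a(s)^2b(s)=0$ pointwise, so $b(s)=0$ for all $s$, and $b=0$ as a function. Since $B_r(G)$ consists of continuous functions and $u\mapsto\omega u$ is injective (by the density in condition (1)), this means $b=0$ in $B_r(G,\omega)$.
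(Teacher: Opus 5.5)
Your product formula and complete-contractivity argument match the paper's. Your faithfulness argument is correct and simpler than the paper's. Since multiplication is pointwise, it suffices that for each $s$ some $u\in A(G)$ has $(\omega u)(s)=\langle u,\lambda_s\omega\rangle\neq 0$, which holds because $\lambda_s\omega\neq 0$. You then add injectivity of $B_r(G)\to C_b(G)$ and of $u\mapsto\omega u$.

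The genuine gap is the ideal property. You must show that $w=\Gamma^*(\Omega(u\otimes v))$ itself lies in $A(G)$, i.e.\ that $x\mapsto\langle u\otimes v,\Gamma(x)\Omega\rangle$ is normal on $VN(G)$. That the pointwise product $(\omega u)(\omega v)$ lies in $A(G)$ is trivial, but it says nothing about membership in the smaller space $\omega A(G)$.

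Your first route does not go through as written. Since $v\in B_r(G)$ is not normal on $VN(G)$, the functional $u\otimes v$ cannot be evaluated against $\Omega\in VN(G)\bar\otimes VN(G)$. Nor can it be composed with the normal map $\Gamma:VN(G)\to VN(G)\bar\otimes VN(G)$, and $\Omega$ does not live in ``$VN(G)\bar\otimes C_r^*(G)^{**}$''. The inclusion $A(G)B_r(G)\subset A(G)$ only controls $\Gamma^*$ of functionals in the norm-closed span of $A(G)\odot B_r(G)$ inside $(C_r^*(G)\otimes C_r^*(G))^*$. You never show that $(u\otimes v)(\,\cdot\,\Omega)$ is such a functional.

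Your second route fails because $\omega$ does not preserve compact support; for $G=\mathbb R$, $\omega u$ is a convolution with the inverse Fourier transform of $1/w$. Reducing to a dense set of $u$ via closedness of $A(G)$ in $B_r(G)$ is harmless, but it leaves the same problem for every $u$ in that set.

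The missing idea is Fell's absorption principle, with $\Omega$ absorbed into the vectors. The paper proceeds as follows:
\begin{enumerate}
\item It writes the $B_r(G)$-factor as $u'\cdot y$ with $u'\in P_r(G)$, using Cohen factorization and the decomposition into four positive definite functions, and takes the GNS representation $\pi$ of $u'$.
\item Lemma~\ref{mult} gives $(y\otimes 1)\Gamma(x\omega)\Omega\in C_r^*(G)\otimes C_r^*(G)$, so $\pi\otimes\id$ can be applied to it.
\item The pairing becomes a vector functional $\big(((\pi\otimes\id)\circ\Gamma)(x\omega)\zeta_1,\zeta_2\big)$, with $\Omega$ and $y$ absorbed into $\zeta_1,\zeta_2$.
\item Fell's absorption, $\pi\otimes\lambda\simeq\bigoplus\lambda$, shows this equals $\langle\sigma,x\omega\rangle$ for some $\sigma\in A(G)$.
\end{enumerate}

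Your ``ideal plus closedness'' idea can be rescued in the same spirit. Because $\Omega$ is a right multiplier of $C_r^*(G)\otimes C_r^*(G)$, it acts in the nondegenerate representation $\lambda\otimes\pi$, where $\pi$ represents $v$. Hence $(u\otimes v)(\,\cdot\,\Omega)$ is a vector functional of $\lambda\otimes\pi$. Approximating the vectors by finite sums of elementary tensors puts it in the norm closure of $A(G)\odot B_r(G)$. Then $A(G)B_r(G)\subset A(G)$ and closedness of $A(G)$ give $w\in A(G)$. This step, making $\Omega$ act in the representation, is exactly what your proposal lacks.
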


\begin{proof}
Let $u,v \in B_r(G)$. By Lemma~\ref{mult}, we have $\Omega  (u \otimes v) \in M(C^*_r(G) \otimes C^*_r(G))^*$. Therefore, $ \Gamma^*(\Omega  (u \otimes v)) \in C^*_r(G)^*=B_r(G)$.  Consequently, $\omega  \Gamma^*(\Omega  (u \otimes v)) \in B_r(G,\omega )\subset B_r(G)$.
Then, for each $x \in C^*_r(G)$, we have
\begin{eqnarray*}
\langle    \omega  \Gamma^*(\Omega  (u \otimes v)), x\rangle &=&  \langle   \Gamma^*(\Omega  (u \otimes v)), x \omega  \rangle \\
&=& \langle   \Omega  (u \otimes v), \Gamma(x \omega ) \rangle \\
&=& \langle   \Omega  (u \otimes v),\Gamma( x)\Gamma(\omega ) \rangle \\
&=&  \langle \Gamma(\omega ) \Omega  (u \otimes v), \Gamma(x)\rangle \\
&=& \langle (\omega  \otimes \omega ) (u \otimes v), \Gamma(x)\rangle \\
&=& \langle (\omega u) (\omega v), x\rangle,
\end{eqnarray*}
where the first two pairings and the last one is the $(B_r(G), C_r^*(G))$-pairing and the rest is the one between $M(C_r^*(G)\otimes C_r^*(G))^*$ and $M(C_r^*(G)\otimes C_r^*(G))$.

Hence $(\omega u) (\omega v)=\omega  \Gamma^*(\Omega  (u \otimes v))$ and therefore it  belongs to $B_r(G, \omega )$. Furthermore, since the mappings $\Gamma^*$, 
$$ v \mapsto \omega v\ \ (v \in B_r(G)), \ \ \text{and} \ \ w \mapsto \Omega  w  \ \ (w \in B_r(G \times G))$$ are completely contractive, we have that the given product,
\begin{equation}\label{eq:point-wise-multiplication}
 (\omega u , \omega v)  \mapsto \omega  \Gamma^*(\Omega  (u \otimes v))
\end{equation}
is completely contractive.

Now let $v\in A(G)$ and $ u \in P_r(G)$, the space of positive definite functions weakly associated to $\lambda$.  Then $v(s)=(\lambda_s\xi,\eta)$ for some $\xi, \eta \in L^2(G)$.
Also, since $u$ is  positive definite, there is a GNS construction $(\pi, \cK, \zeta)$ so that $\langle u, x\rangle = ( \pi(x)\zeta, \zeta )$ ($x\in C^*_r(G)$). Let $y\in C_r^*(G)$ be arbitrary.  Write also $\pi$ for the corresponding group representation of $G$ on $\cK$. One has, in particular, that  $\pi$ is weakly associated to $\lambda$ (see \cite{kaniuth-lau}).
 Using 
that $(y\otimes 1)\Gamma(x)\in C_r^*(G)\otimes C_r^*(G)$ and $\pi$ is nondegenerate, 
for each $x \in C^*_r(G)$, we have
\begin{eqnarray*}
\langle (\omega ( u\cdot y))(\omega v), x\rangle &=&  \langle (\omega  \otimes \omega ) (u\cdot y \otimes v), \Gamma(x) \rangle\\
&=& \langle \Gamma(\omega ) \Omega  (u\cdot y \otimes v), \Gamma(x)\rangle \\
&=& \langle   \Omega  (u\cdot y \otimes v), \Gamma(x\omega )\rangle \\
&=&\langle    (u \otimes v), (y\otimes 1)\Gamma(x\omega )\Omega  \rangle \\
&=& ( (\pi \otimes \text{id}) ((y\otimes 1)\Gamma(x\omega )\Omega  ) (\zeta \otimes \xi), \zeta \otimes \eta) \\
&=& ( (\pi \otimes \text{id}) (\Gamma(x\omega ))(\pi \otimes \text{id})(\Omega  ) (\zeta \otimes \xi), (\pi \otimes \text{id}) (y\otimes 1)(\zeta \otimes \eta))\\
&=& ( (\pi \otimes \text{id}) \circ \Gamma)(x\omega ) \zeta_1 , \zeta_2 ),
\end{eqnarray*}
where $\zeta_1,\zeta_2\in \cK \otimes L^2(G)$. 
By Fell's absorption principle,  we have that there is a  sum $\bigoplus_{i\in I} \lambda$ which is unitarily equivalent to the group representation $\pi \otimes \lambda $. Having  this equivalence, one can find vectors $\varsigma_1, \varsigma_2$ so that,
\[
 ( (\pi \otimes \text{id}) \Gamma(x\omega ))\zeta_1, \zeta_2) = \big(\big(\bigoplus_{i\in I} x\omega\big)  \varsigma_1, \varsigma_2 ).
 \]
But  the map
\[
z \mapsto \big(\big(\bigoplus_{i\in I}z) \varsigma_1, \varsigma_2 \big), \quad (z \in VN(G))
\]
is a  normal functional in $VN(G)_*=A(G)$.  Equivalently, there is some $\sigma \in A(G)$ so that
\[
\langle (\omega  (u\cdot y))(\omega v), x\rangle = \langle \sigma,  x\omega \rangle
\]
for every $x \in C^*_r(G)$.  Hence, $(\omega (u\cdot y))(\omega v) = \omega \sigma$ which is an element in $A(G, \omega )$.  Since every element of $B_r(G)$ can be written as a sum of four elements in $P_r(G)$, and by Cohen's factorization theorem $B_r(G)=B_r(G)\cdot C_r^*(G)$, we have that $A(G,\omega )$ is an ideal in $B_r(G, \omega )$.

To prove that $B_r(G,\omega )$ is faithful, suppose that $v \in B_r(G)$ is fixed so that for every $u \in A(G)$, $(\omega v)(\omega u)=0$.
For an arbitrary pair $u' \in B_r (G)$ and $y \in C^*_r(G)$, let $u$ be $u' \cdot y$ as an element in $B_r(G)$.
Therefore, for each $x \in C^*_r(G)$ we have
\begin{eqnarray*}
0 = \langle (\omega v)(\omega u) , x\rangle &=& \langle \omega  \Gamma^*(\Omega (v \otimes u)), x\rangle\\
&=& \langle    v \otimes u , \Gamma(x\omega )\Omega  \rangle\\
&=& \langle    v \otimes u , \Gamma(x)\Gamma(\omega )\Omega  \rangle\\
&=&\langle    v \otimes u , \Gamma(x) (\omega  \otimes \omega)  \rangle\\
&=&\langle    (v \otimes u') \cdot (1 \otimes y) , \Gamma(x) (\omega  \otimes \omega)  \rangle\\
&=&\langle    v \otimes u'   ,(1 \otimes y) \Gamma(x) (\omega  \otimes \omega ) \rangle.
\end{eqnarray*}
But we know that the space spanned by $\{(1 \otimes y) \Gamma(x) : x,y \in C^*_r(G)\}$ is dense in $C^*_r(G) \otimes C^*_r(G)$. Meanwhile, $\omega  \otimes \omega $, as an element in $M(C^*_r(G) \otimes C^*_r(G))$, has a dense range.  Therefore, the space spanned by $\{(1 \otimes y) \Gamma(x) (\omega  \otimes \omega)  : x,y \in C^*_r(G)\}$ is dense in $C^*_r(G) \otimes C^*_r(G)$. But if $v \neq 0$, then  there is some $x \in C^*_r(G)$ so that $\langle v, x\rangle \neq 0$; consequently, $\langle v \otimes v, x \otimes x \rangle \neq 0$ which is a contradiction.
\end{proof}

We note that similar to the duality $A(G, \omega )^*=VN(G)$, one can readily show that $B_r(G,\omega )$ is the dual space of $C^*_r(G)$ where for each $x \in C^*_r(G)$ and $u \in B_r(G)$,

\begin{equation}\label{eq:duality-B(G,omega)}
\langle \omega u, x\rangle_\omega :=\langle  u,  x \rangle 
\end{equation}
 given that the second product $\langle \cdot, \cdot\rangle$ is from the duality $(B_r(G), C^*_r(G))$.  Here we use $\cdot_\omega $ to denote the action of elements of $B_r(G, \omega )$ on an element $x \in C^*_r(G)$ i.e.
 \[
\langle \omega v, \omega  u \cdot_\omega x  \rangle_\omega  := \langle (\omega  u) (\omega v), x \rangle_\omega 
\]
for every $v \in B_r(G)$.

\begin{corollary}\label{c:C*.A(G,w)-dense-C*r}
Let $G$ be a locally compact group with a weight inverse $\omega $. Then the linear span of $\{\omega u \cdot_\omega x  : x \in C^*_r(G), u \in A(G)\}$ is dense in $C^*_r(G)$.
\end{corollary}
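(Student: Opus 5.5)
The plan is a Hahn--Banach argument. By \eqref{eq:duality-B(G,omega)}, the dual of $C^*_r(G)$ is $B_r(G,\omega)$ under the pairing $\langle\cdot,\cdot\rangle_\omega$. So it suffices to show that if $\omega v\in B_r(G,\omega)$ annihilates every $\omega u\cdot_\omega x$ with $u\in A(G)$, $x\in C^*_r(G)$, then $v=0$. Faithfulness in Theorem~\ref{t:main} will do most of the work.

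First I will check that $\omega u\cdot_\omega x$ really is an element of $C^*_r(G)$, and not merely of its bidual. By the computation in the proof of Theorem~\ref{t:main},
\[
\langle (\omega u)(\omega v),x\rangle_\omega=\langle \Gamma^*(\Omega(u\otimes v)),x\rangle=\langle u\otimes v,\Gamma(x)\Omega\rangle ,
\]
so $\omega u\cdot_\omega x$ is the slice $(u\otimes \id)(\Gamma(x)\Omega)$.
\begin{itemize}
\item By Cohen factorization, since $A(G)$ is an essential module over $C^*_r(G)$, I write $u=u'\cdot y$ with $u'\in A(G)$ and $y\in C^*_r(G)$.
\item Then the slice becomes $(u'\otimes\id)\big((y\otimes 1)\Gamma(x)\Omega\big)$.
\item We have $(y\otimes1)\Gamma(x)\in C^*_r(G)\otimes C^*_r(G)$, and by Lemma~\ref{mult} right multiplication by $\Omega$ preserves $C^*_r(G)\otimes C^*_r(G)$.
\item A slice of an element of $C^*_r(G)\otimes C^*_r(G)$ by a bounded functional lies in $C^*_r(G)$.
\end{itemize}
This is the step I expect to need the most care, because of the multiplier-algebra bookkeeping.

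Next, suppose $\langle \omega v,\omega u\cdot_\omega x\rangle_\omega=0$ for all $u\in A(G)$ and $x\in C^*_r(G)$. Then $\langle(\omega u)(\omega v),x\rangle_\omega=0$ for all $x$. Since $(\omega u)(\omega v)=\omega w$ with $w=\Gamma^*(\Omega(u\otimes v))$ and $\langle\omega w,x\rangle_\omega=\langle w,x\rangle$, this gives $w=0$. Hence $(\omega v)(\omega u)=0$ for every $u\in A(G)$.

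Finally, I will run the faithfulness argument of Theorem~\ref{t:main}, but taking $u=u'\cdot y$ with $u'\in A(G)$. This keeps $u$ inside $A(G)$, because $A(G)$ is a $VN(G)$-module. As in that proof, this yields
\[
\langle v\otimes u',(1\otimes y)\Gamma(x)(\omega\otimes\omega)\rangle=0 \qquad\text{for all } x,y\in C^*_r(G),\ u'\in A(G).
\]
The elements $(1\otimes y)\Gamma(x)(\omega\otimes\omega)$ span a dense subspace of $C^*_r(G)\otimes C^*_r(G)$, by the density property of $\Gamma$ together with condition (1) of Definition~\ref{D:Weight}. Hence $v\otimes u'=0$ on $C^*_r(G)\otimes C^*_r(G)$ for every $u'\in A(G)$. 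Since $A(G)$ separates the points of $C^*_r(G)$, some $u'\neq0$ exists, and therefore $v=0$, giving the required density.
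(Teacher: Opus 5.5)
Your argument is correct and follows the paper's proof: Hahn--Banach reduces density to showing that any $\omega v$ annihilating all $\omega u\cdot_\omega x$ satisfies $(\omega u)(\omega v)=0$ for every $u\in A(G)$, and the faithfulness computation from Theorem~\ref{t:main} then gives $v=0$. Two of your steps make it more careful than the paper's terse version. First, you check that $\omega u\cdot_\omega x$ actually lies in $C^*_r(G)$, which the paper only establishes afterwards in Lemma~\ref{l:B(G,omega)-dual-Banach-algebra}. Second, you take $u'\in A(G)$ rather than $u'\in B_r(G)$, so that $u'\cdot y$ stays in $A(G)$, where the hypothesis actually applies.
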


\begin{proof}
To show this, note that if $\omega v \in B_r(G,\omega )$
is such that $\langle \omega v,  \omega u\cdot_\omega x\rangle_\omega  = 0$ for all $u \in A(G)$ and $x \in C^*_r(G)$, then $(\omega u) (\omega v)=0$
for all $u \in A(G)$. 
By the last part of the proof of Theorem \ref{t:main}, 
$\omega v=0$.
\end{proof}

 \vskip1.0em

We recall that a Banach algebra $\mathcal{A}$ is a \emph{dual Banach algebra}  if $\mathcal{A} = (\mathcal{A}_* )^*$ for a closed $\mathcal{A}$-submodule $\mathcal{A}_*$ of $\mathcal{A}^*$. It is known that $B_r(G)$   for locally compact groups $G$ are forming dual Banach algebras. In the following we show that this is also the case for $B_r(G, \omega )$. 

\begin{lem}\label{l:B(G,omega)-dual-Banach-algebra}
Let $G$ be a locally compact group with a weight inverse $\omega $. Then $B_r(G, \omega )$ is a dual Banach algebra.
\end{lem}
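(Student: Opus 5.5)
We take $\mathcal{A}_*=C^*_r(G)$ as the predual. It sits inside $B_r(G,\omega)^*$ through the pairing $\langle\cdot,\cdot\rangle_\omega$ of \eqref{eq:duality-B(G,omega)}. The map $u\mapsto\omega u$ is an isometric isomorphism $B_r(G)\to B_r(G,\omega)$, and under it $\langle\cdot,\cdot\rangle_\omega$ becomes the usual $(B_r(G),C^*_r(G))$ pairing. Hence $B_r(G,\omega)=(C^*_r(G))^*$ isometrically, and $C^*_r(G)$ embeds isometrically as a closed subspace of $B_r(G,\omega)^*$. By Theorem~\ref{t:main}, $B_r(G,\omega)$ is a Banach algebra. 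It therefore remains only to check that $C^*_r(G)$ is a $B_r(G,\omega)$-submodule of $B_r(G,\omega)^*$. Since the algebra is commutative, a one-sided check suffices: for $u\in B_r(G)$ and $x\in C^*_r(G)$, the functional $\omega u\cdot_\omega x$ must be represented by an element of $C^*_r(G)$.

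To compute it, fix $v\in B_r(G)$ and use the product formula from the proof of Theorem~\ref{t:main}:
\[
\langle \omega v,\omega u\cdot_\omega x\rangle_\omega=\langle \Gamma^*(\Omega(u\otimes v)),x\rangle=\langle u\otimes v,\Gamma(x)\Omega\rangle .
\]
By Lemma~\ref{mult}, $\Gamma(x)\Omega\in M(C^*_r(G)\otimes C^*_r(G))$. The natural candidate is the slice
\[
z:=(u\otimes\id)(\Gamma(x)\Omega),
\]
and we must show $z\in C^*_r(G)$, not merely $z\in M(C^*_r(G))$.

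This is the main obstacle, and we would handle it by density. By Cohen factorization, as in the proof of Theorem~\ref{t:main}, write $u=u'\cdot y$ with $y\in C^*_r(G)$. Then
\[
\langle u\otimes v,\Gamma(x)\Omega\rangle=\langle u'\otimes v,(y\otimes1)\Gamma(x)\Omega\rangle .
\]
The element $(y\otimes1)\Gamma(x)$ lies in $C^*_r(G)\otimes C^*_r(G)$. Lemma~\ref{mult} says $\Omega$ is a right multiplier of $C^*_r(G)\otimes C^*_r(G)$, so $(y\otimes1)\Gamma(x)\Omega\in C^*_r(G)\otimes C^*_r(G)$. The slice map $u'\otimes\id$ sends $C^*_r(G)\otimes C^*_r(G)$ into $C^*_r(G)$; this is clear on elementary tensors and extends by continuity for the minimal tensor norm. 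Therefore
\[
z=(u'\otimes\id)\big((y\otimes1)\Gamma(x)\Omega\big)\in C^*_r(G),
\]
and $\langle\omega v,\omega u\cdot_\omega x\rangle_\omega=\langle v,z\rangle=\langle\omega v,z\rangle_\omega$ for all $v$. So $\omega u\cdot_\omega x=z\in C^*_r(G)$.

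The module identities $(ab)\cdot_\omega x=a\cdot_\omega(b\cdot_\omega x)$ follow from associativity of the product. This shows that $C^*_r(G)$ is a closed submodule with dual $B_r(G,\omega)$, which proves the lemma. Equivalently, multiplication is separately weak$^*$-continuous.
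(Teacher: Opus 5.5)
Your proposal is correct and follows essentially the same route as the paper. You use Cohen factorization to pull a factor $y\in C^*_r(G)$ out of one leg, invoke Lemma~\ref{mult} to get $(y\otimes 1)\Gamma(x)\Omega\in C^*_r(G)\otimes C^*_r(G)$, and slice to land in $C^*_r(G)$. The only difference is that you factor the first leg with $(y\otimes 1)$, whereas the paper factors the second leg with $(1\otimes y)$; by commutativity and the symmetry of the comultiplication this is immaterial.
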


\begin{proof}
To show that $B_r(G,\omega )$ is a dual Banach algebra, it suffices to show that the dual action  $\omega v \cdot_\omega  x$ ($x \in C^*_r(G)$ and $v \in B_r(G)$) yields an element in $C^*_r(G)$.   Recall that $C^*_r(G)^*=B_r(G)$ and therefore, $C^*_r(G)$ acts on $B_r(G)$ continuously. Based on this action and by Cohen's factorization theorem we have $v = v' \cdot y$ for some $y' \in C^*_r(G)$ and $v' \in B_r(G)$. Moreover, as $(1\otimes y)\Gamma(x)\in C_r^*(G)\otimes C_r^*(G)$, by Lemma \ref{mult}, $(1\otimes y)\Gamma(x)\Omega$ belongs to $C_r^*(G)\otimes C_r^*(G)$, and the slicing $\id \otimes v$ on $ C^*_r(G \times G)=C^*_r(G)\otimes C^*_r(G)$ maps the latter into $C^*_r(G)$.  Using this, we obtain 
 \begin{eqnarray*}
\langle \omega u, \omega v \cdot_\omega  x \rangle_\omega &=& \langle (\omega u)(\omega v), x\rangle_\omega \\
&=& \langle \omega  \Gamma^*(\Omega (u \otimes v)), x\rangle_\omega \\
&=&  \langle \Omega (u \otimes v) , \Gamma(x ) \rangle\\
&=&  \langle u \otimes v, \Gamma(x)\Omega  \rangle\\
&=&  \langle u \otimes (v'\cdot y), \Gamma(x)\Omega  \rangle\\
&=&  \langle (u \otimes v')\cdot (1 \otimes y), \Gamma(x )\Omega  \rangle\\
&=&  \langle (u \otimes v') , (1 \otimes y)\Gamma(x) \Omega  \rangle\\
&=&  \langle u, \id \otimes v' \left((1\otimes y)\Gamma(x)\Omega  \right) \rangle \\
&=&  \langle \omega u, \id \otimes v' \left((1 \otimes y)\Gamma(x) \Omega  \right) \rangle_\omega .
 \end{eqnarray*}
Since $u \in B_r(G)$ was arbitrary, it gives 
\begin{align*}
 \omega v \cdot_\omega  x = \id \otimes v' \left((1 \otimes y)\Gamma(x) \Omega  \right)\in C_r^*(G).  
\end{align*}
\end{proof}

 \section{Weighted generalization of the Herz restriction theorem}

To lift weights from subgroups we apply the theory of duality for subgroups developed in \cite{tata}. Here we mainly use the modern treatment of the topic in the language of locally compact quantum groups in \cite{ds}. Lifting of weights from subgroups to the full group has been considered and studied in the literature, see \cite{gllst, LeSa, lst}. However, in this section, we develop a comprehensive approach based on inverse weights.


Let $H$ be a closed subgroup of $G$. Then by \cite[Theorem~3.3]{ds} and its proof (see also \cite[Proposition 2.6.6]{kaniuth-lau}), there exists a normal injective $*$-homomorphism $\gamma:VN(H) \rightarrow VN(G)$ such that
\begin{equation}\label{eq:gamma}
\gamma(\lambda^{H}_s)=\lambda_s^G
\end{equation}
 where $\lambda^G$ and $\lambda^H$ denote the left regular representations of $G$ and $H$, respectively. 
Furthermore, the restriction of $\gamma$ to $C^*_r(H)$, still denoted by $\gamma$, is a non-degenerate $*$-homomorphism $\gamma: C^*_r(H) \rightarrow M(C^*_r(G))$. In fact, using Cohen's factorization theorem, $\gamma(C^*_r(H)) C^*_r(G)$ and $C^*_r(G) \gamma(C^*_r(H))$ are equal to $C^*_r(G)$.
Let us again use $\gamma$ to show the unique extension $\gamma$ as a $*$-homomorphism from $M(C^*_r(H))$ into $M(C^*_r(G))$.

Note that since $H$ is a closed subgroup of $G$, the same holds for $H \times H$ with respect to $G \times G$. Then using \eqref{eq:gamma}, we have that
\begin{equation}\label{eq:gamma-x-gamma}
\gamma \otimes \gamma: VN(H)\bar\otimes VN(H) \rightarrow VN(G) \bar\otimes VN(G)
\end{equation}
 and by \cite[Theorem~3.3 (3)]{ds},
 \begin{equation}\label{eq:Gamma}
 (\gamma \otimes \gamma) \circ \Gamma_H= \Gamma_G \circ \gamma
 \end{equation}
where $\Gamma_G$ and $\Gamma_H$ are the co-multiplication of $VN(G)$ and $VN(H)$, respectively.

\begin{proposition}\label{p:lifting-weights}
Let $G$ be a locally compact group with a closed subgroup $H$. If $\omega $ is a weight inverse for $H$, then $\gamma(\omega )$ is a weight inverse for $G$. 
\end{proposition}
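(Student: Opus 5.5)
We need to verify the three requirements of Definition~\ref{D:Weight} for $\gamma(\omega)$: that it is a contraction in $M(C^*_r(G))$, the density condition (1), and the factorization condition (2). The first is immediate. Since $\omega\in M(C^*_r(H))$ and $\gamma$ extends to a $*$-homomorphism $M(C^*_r(H))\to M(C^*_r(G))$, we have $\gamma(\omega)\in M(C^*_r(G))$. Because $*$-homomorphisms are contractive, $\|\gamma(\omega)\|\le\|\omega\|\le 1$.

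For condition (2), we take $\Omega_H\in VN(H\times H)$ with $\|\Omega_H\|\le1$ and $\omega\otimes\omega=\Gamma_H(\omega)\Omega_H$, and set $\Omega_G:=(\gamma\otimes\gamma)(\Omega_H)\in VN(G)\bar\otimes VN(G)=VN(G\times G)$. Since $\gamma\otimes\gamma$ in \eqref{eq:gamma-x-gamma} is a normal $*$-homomorphism, $\|\Omega_G\|\le 1$. Applying $\gamma\otimes\gamma$ to the factorization and using \eqref{eq:Gamma} gives
\[
\gamma(\omega)\otimes\gamma(\omega)=(\gamma\otimes\gamma)(\Gamma_H(\omega))\,\Omega_G=\Gamma_G(\gamma(\omega))\,\Omega_G .
\]
One point needs care: \eqref{eq:Gamma} is stated on $VN(H)$, while $\omega$ is regarded in $M(C^*_r(H))$. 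Multiplier elements sit inside $VN(H)$, since $C^*_r(H)$ acts nondegenerately on $L^2(H)$. The strictly continuous extension of $\gamma$ agrees with the normal map $\gamma$ on $VN(H)$, because both extend the same map on $C^*_r(H)$ and are strictly continuous, respectively strong-continuous, on bounded sets. So the identity applies to $\omega$.

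Condition (1) is the main point. We want $C^*_r(G)\gamma(\omega)$ and $\gamma(\omega)C^*_r(G)$ to be dense in $C^*_r(G)$. By nondegeneracy, $C^*_r(G)=C^*_r(G)\gamma(C^*_r(H))$, since Cohen factorization gives this even without taking closures. Hence every $z\in C^*_r(G)$ has the form $z=x\gamma(y)$ with $x\in C^*_r(G)$ and $y\in C^*_r(H)$. By condition (1) for $\omega$ on $H$, there are $y_n\in C^*_r(H)$ with $y_n\omega\to y$ in norm. Then $x\gamma(y_n)\in C^*_r(G)$, and
\[
x\gamma(y_n)\gamma(\omega)=x\gamma(y_n\omega)\to x\gamma(y)=z
\]
in norm, because $\gamma$ is contractive. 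This shows that $C^*_r(G)\gamma(\omega)$ is dense. The symmetric argument, using $C^*_r(G)=\gamma(C^*_r(H))C^*_r(G)$ and the density of $\omega C^*_r(H)$, gives the density of $\gamma(\omega)C^*_r(G)$.

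The only potentially delicate point is the compatibility between the strict extension of $\gamma$ to multipliers and the normal map on $VN(H)$. I would settle it by noting that both maps agree on $C^*_r(H)$ and that $M(C^*_r(H))$ lies in the strong closure of $C^*_r(H)$, approximated by $e_i\omega$ for a bounded approximate identity $(e_i)$. The remaining steps are routine.
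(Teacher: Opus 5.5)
Your proposal is correct and follows essentially the same route as the paper. Contractivity comes from the extended $*$-homomorphism $\gamma$. Condition (1) follows from Cohen factorization through $\gamma(C^*_r(H))$ combined with the density condition for $\omega$. Condition (2) comes from applying $\gamma\otimes\gamma$ and using $(\gamma\otimes\gamma)\circ\Gamma_H=\Gamma_G\circ\gamma$, with $(\gamma\otimes\gamma)(\Omega)$ as the new $\Omega$. Your extra check that the strict extension of $\gamma$ to $M(C^*_r(H))$ agrees with the normal map on $VN(H)$ is correct and settles a point the paper leaves implicit.
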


\begin{proof}
As we saw before, $\gamma(\omega )$ is an element in $M(C^*_r(G))$ with $\norm{\gamma(\omega )} \leq \norm{\omega }\leq 1$. Let $\epsilon>0$ and $x \in C^*_r(G)$ be given. Since $C^*_r(G) = \gamma(C^*_r(H)) C^*_r(G)$,  there is a decomposition $x =\gamma(y) x'$ for $y \in C^*_r(H)$ and $x' \in C^*_r(G)$. Since $\omega  C^*_r(H)$ is dense in $C^*_r(H)$, there is some $y' \in C^*_r(G)$ so that $\norm{\omega y' - y}<\epsilon/\norm{x'}$.  Therefore,
\[
\norm{ x - \gamma(\omega ) \left( \gamma(y')x'\right) } \leq \norm{y - \omega  y'} \norm{x'} < \epsilon.
\]
Therefore, $\gamma(\omega ) C^*_r(G)$ is dense in $C^*_r(G)$. A similar argument implies that  $C^*_r(G) \gamma(\omega )$ is dense in $C^*_r(G)$.

Note that since $H \times H$ is a subgroup of  $G \times G$, we have the following restriction from \eqref{eq:gamma-x-gamma}
 \[
 \gamma \otimes \gamma: M( C^*_r(H) \otimes C^*_r(H))  \rightarrow  M(C^*_r(G) \otimes C^*_r(G)).
 \]
 Subsequently, using \eqref{eq:Gamma} and condition $(2)$ on $\omega$ from Definition \ref{D:Weight}, we get that
 \begin{eqnarray*}
\gamma(\omega ) \otimes \gamma(\omega ) &=& (\gamma \otimes \gamma)( \omega \otimes \omega )\\
&=& (\gamma \otimes \gamma)(\Gamma_H (\omega )\Omega )\\
&=& \Gamma_G( \gamma(\omega )) (\gamma\otimes \gamma)(\Omega ).
 \end{eqnarray*}
 Hence, $\gamma(\omega )$ is a weight inverse for $G$.
\end{proof}

Since $\gamma$ is a normal injective $*$-homomorphism, its adjoint restricted to $A(G)$ forms a mapping $\gamma_*: A(G) \rightarrow A(H)$.
As $\gamma$ is injective and hence an isometry, $\gamma_*$  is a surjection, (see \cite[Theorem~3.7]{ds}). In fact, in the light of \eqref{eq:Gamma}, it is easy to show that $\gamma_*(u)$ is nothing but the well-known Herz restriction of elements of $A(G)$ to $A(H)$. Here we give a generalization of this result to weighted Fourier algebras. 

\begin{proposition}\label{t:weighted-resteriction}
Let $G$ be a locally compact group with a closed subgroup $H$. For each weight inverse $\omega $ on $H$,
the restriction map $u \mapsto u|_H$ maps $A(G,\gamma(\omega ))$ onto $A(H,\omega )$.
\end{proposition}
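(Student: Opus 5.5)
The plan is to reduce the statement to the unweighted Herz restriction theorem, which in this setting is the surjectivity of the predual map $\gamma_*:A(G)\to A(H)$ together with the identification $\gamma_*(v)=v|_H$. The identification follows from \eqref{eq:gamma}, since $\gamma_*(v)(s)=\langle v,\gamma(\lambda^H_s)\rangle=\langle v,\lambda^G_s\rangle=v(s)$ for $s\in H$. The key identity to establish is the module compatibility
\[
\gamma_*(\gamma(\omega)v)=\omega\,\gamma_*(v)\qquad (v\in A(G)).
\]
To prove it, pair both sides with an arbitrary $R\in VN(H)$ and use that $\gamma$ is multiplicative:
\[
\langle \gamma_*(\gamma(\omega)v),R\rangle=\langle \gamma(\omega)v,\gamma(R)\rangle=\langle v,\gamma(R)\gamma(\omega)\rangle=\langle v,\gamma(R\omega)\rangle=\langle \gamma_*(v),R\omega\rangle=\langle \omega\gamma_*(v),R\rangle .
\]
Here $\omega\in M(C^*_r(H))\subset VN(H)$, and $\gamma$ on $M(C^*_r(H))$ agrees with the normal map $\gamma$ on $VN(H)$. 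Both are the unique extension compatible with the action on $C^*_r(G)$, so this point should be checked carefully but is routine.

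With this identity, I take an element $\gamma(\omega)v\in A(G,\gamma(\omega))$ with $v\in A(G)$. Proposition \ref{p:lifting-weights} guarantees that $\gamma(\omega)$ is a weight inverse, so $A(G,\gamma(\omega))$ makes sense. Viewing $\gamma(\omega)v$ as a function in $A(G)$, its restriction to $H$ is
\[
(\gamma(\omega)v)|_H=\gamma_*(\gamma(\omega)v)=\omega\,\gamma_*(v)\in \omega A(H)=A(H,\omega).
\]
So restriction maps $A(G,\gamma(\omega))$ into $A(H,\omega)$. Moreover, $\|\omega\gamma_*(v)\|_{A(H,\omega)}=\|\gamma_*(v)\|_{A(H)}\le\|v\|_{A(G)}=\|\gamma(\omega)v\|_{A(G,\gamma(\omega))}$, so the map is contractive.

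For surjectivity, let $\omega u\in A(H,\omega)$ with $u\in A(H)$. By surjectivity of $\gamma_*$ (\cite[Theorem~3.7]{ds}) there is $v\in A(G)$ with $\gamma_*(v)=u$; one can even choose $\|v\|=\|u\|$, since $\gamma$ is an isometry and hence $\gamma_*$ is a quotient map. Then $(\gamma(\omega)v)|_H=\omega u$, which gives surjectivity. In fact, the restriction map is a quotient map $A(G,\gamma(\omega))\to A(H,\omega)$.

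The main obstacle is justifying the step $\gamma(R)\gamma(\omega)=\gamma(R\omega)$ for the extended $\gamma$ on multipliers. This requires that the extension of $\gamma$ from $C^*_r(H)$ to $M(C^*_r(H))$ coincides with the restriction of the normal $*$-homomorphism $\gamma$ on $VN(H)$ to $M(C^*_r(H))\subset VN(H)$. I would verify this by noting that for $m\in M(C^*_r(H))$ and $y\in C^*_r(H)$, both candidates satisfy $\gamma(m)\gamma(y)x=\gamma(my)x$ for $x\in C^*_r(G)$. Since $\gamma(C^*_r(H))C^*_r(G)$ is dense in $C^*_r(G)$ and $\lambda^G$ is nondegenerate, the two candidates agree.
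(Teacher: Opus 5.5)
Your proposal is correct and follows essentially the paper's route. The paper also establishes $(\gamma(\omega)u)|_H=\omega\,\gamma_*(u)$, though it pairs directly with the point evaluations $\lambda^H_s$ rather than an arbitrary $R\in VN(H)$, and then uses the surjectivity of $\gamma_*$. Your check that the multiplier-algebra extension of $\gamma$ agrees with the normal $\gamma$ on $M(C^*_r(H))$ makes explicit a step the paper uses without comment. (A small aside: being a quotient map only gives $\|v\|<\|u\|+\varepsilon$. The equality $\|v\|=\|u\|$ is true but needs a finer argument, e.g.\ norm-preserving extension of normal functionals from $\gamma(VN(H))$ to $VN(G)$.)
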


\begin{proof}
Let  $u \in A(G)$. Therefore, $\gamma(\omega )u \in A(G, \gamma(\omega ))$. Note that for each $s \in H$ and $u \in A(G)$, by \eqref{eq:point-evaluation}, we have
\begin{eqnarray*}
\gamma(\omega )u(s) &=& \langle \gamma(\omega )u,  \lambda^G_s,\rangle\\
&=&\langle u, \gamma(\lambda_s^H) \gamma(\omega )\rangle\\
&=&\langle \gamma_*(u),  \lambda_s^H \omega \rangle\\
&=&\langle \omega \gamma_*(u),  \lambda_s^H \rangle.
\end{eqnarray*}
 Therefore, 
 $\gamma(\omega )u |_H =\omega  \gamma_*(u)$ for every $u \in A(G)$.  Since $\{\gamma_*(u): u \in A(G)\}$ is equal to $A(H)$ we have that the restriction $\gamma(\omega ) u \mapsto \gamma(\omega ) u |_H$ is   onto.
\end{proof}

\begin{eg}\label{eg:semidirect}
Let $N$ be an abelian locally compact group and  $H$ be a locally compact subgroup of automorphism  $N$. Let $G:=N\rtimes H$ be the semidirect product of $N$ and $H$. We may identify $N$ with the subgroup $\{(t,e): t\in N\}$ of $G$ where $e$ is the identity of the group $H$. Let $\alpha: H\to \text{Aut}(N)$ be the map implementing the semidirect product and let $\delta_H: H\to \mathbb R^+$ be a continuous homomorphism  such that $\int_Nf(x)dm_N(x)=\delta(h)\int_Nf(\alpha_h(x))dm_N(x)$ for any measurable function $f$ on $N$. 

Here we   apply Proposition~\ref{t:weighted-resteriction} to lift a  weight inverse from $N$ to $G$. To do so, let $\xi \in L^2(G)$, for each $t \in N$, first note that by \eqref{eq:gamma},
\[
\gamma(\lambda^N_t)  \xi(s,r)= \lambda^G_{(t,e)} \xi (s,r) = \xi(t^{-1} s,r), \ \ \ \ (s,r) \in G
\]
 Since $dm_G(s,r)  = dm_N(s)\delta_H^{-1}(r) dm_H(r) $, 
 $$L^2(G)= L^2(N,dm_N)\otimes L^2(H, \delta_H^{-1} dm_H).$$  Therefore,
 \begin{equation}\label{eq:M(A)-on-L^2(G)}
\gamma(\lambda^N_t)  \xi =  (\lambda^N_t\otimes \id) \xi.
\end{equation}
Let $w $ be a continuous submultiplicative function on $\hat{N}$ i.e. $w (ts) \leq w(s) w (t)$ ($s,t \in \hat{N}$)
and assume  $1/w  \in C_b(\hat{N})$. 
Let $\omega $ denote its Fourier transform on locally compact abelian group $N$, i.e. $\omega  \eta = {\cal F}_N^{-1}(\frac{1}{w }   {\cal F}_N(\eta))$, for every $\eta \in L^2(N)$, where  ${\cal F}_N$ and ${\cal F}^{-1}_N$ denote the Fourier transform and its inverse for  $N$.  Consequently, $\omega  \in M(C^*_r(N))$ and subsequently  $$\gamma(\omega ) \in \gamma(M(C^*(N))) \subseteq VN(G).$$
Since $\omega $ belongs to $VN(N)$ and the linear span of $\{\lambda^N_s: s\in N\}$ is dense in $VN(N)$, there is a net $(x_\alpha)_\alpha$ in the linear span of $\{\lambda^N_s: s\in N\}$ so that $x_\alpha \rightarrow \omega $ weakly$^*$. But   weak$^*$-weak$^*$  continuity of  $\gamma$ implies that $\gamma(x_\alpha) \rightarrow \gamma(\omega )$.  So applying a linear extension of \eqref{eq:M(A)-on-L^2(G)}, we get 
\begin{eqnarray*}
(\gamma(\omega ) \xi_1 \otimes \eta_1 , \xi_2 \otimes \eta_2) &=& \lim_\alpha (\gamma(x_\alpha) \xi_1 \otimes \eta_1 , \xi_2 \otimes \eta_2)\\
&=& \lim_\alpha ( (x_\alpha\otimes \id)\xi_1 \otimes \eta_1 , \xi_2 \otimes \eta_2)\\
&=& ( (\omega  \otimes \id)\xi_1 \otimes \eta_1 , \xi_2 \otimes \eta_2)\\
&=& ( {\cal F}_N^{-1}(\frac{1}{w }   {\cal F}_N(\xi_1)) \otimes \eta_1 , \xi_2 \otimes \eta_2),
\end{eqnarray*}
where $\xi_1, \xi_2 \in L^2(N)$ and $\eta_1, \eta_2 \in L^2(H, \delta_H^{-1} dm_H)$.
\end{eg}

\begin{eg}\label{weight_functions}
For $a$, $s$, $t\geq 0$, $0\leq b\leq 1$, let
$$w_{a,b,s,t}(x)=e^{a|x|^b}(1+|x|^s)\ln(e+|x|)^t, x\in\mathbb R.$$
Then $w_{a,b,s,t}$ is a continuous submultiplicative function on $\mathbb R$ and $\mathbb Z$ with $1/w\in C_b(\mathbb R)$.  Hence the corresponding Fourier transforms give weight inverses $\omega\in M(C_r^*(\mathbb R))$ and $C_r^*(\mathbb T)$ respectively. 
\end{eg}
\begin{cor}\label{c:R-T-subgroups}
Let $G$ be a locally compact group with a closed subgroup isomorphic to $\Bbb{R}$ or $\Bbb{T}$. Then $G$ possesses non-invertible weight inverses.
 \end{cor}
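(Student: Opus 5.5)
The plan is to take a weight inverse on $\Bbb R$ or $\Bbb T$ that is not invertible, lift it to $G$ with Proposition~\ref{p:lifting-weights}, and then show the lift is still not invertible.

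First, fix a closed subgroup $H\le G$ with $H\cong\Bbb R$ or $H\cong\Bbb T$. Use Example~\ref{weight_functions} to choose an unbounded submultiplicative weight $w$ on $\hat H$, which is $\Bbb R$ or $\Bbb Z$; for instance $w(x)=1+|x|$, i.e.\ $s=1$ and $a=t=0$. Its inverse Fourier transform $\omega$ is a weight inverse on $H$. It is not invertible in $VN(H)$: under the Fourier transform $VN(H)\cong L^\infty(\hat H)$, and $\omega$ corresponds to $1/w$, which is not bounded away from zero.

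Next, Proposition~\ref{p:lifting-weights} shows that $\gamma(\omega)$ is a weight inverse on $G$.

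It remains to show that $\gamma(\omega)$ is not invertible. By Corollary~\ref{c:omega-invertible} it suffices to rule out invertibility in $VN(G)$. Here $\gamma:VN(H)\to VN(G)$ is an injective normal $*$-homomorphism, so $\gamma(VN(H))$ is a von Neumann subalgebra of $VN(G)$ isomorphic to $VN(H)$. Von Neumann subalgebras are spectrally closed, since C$^*$-subalgebras are inverse closed. Hence
\[
\sigma_{VN(G)}(\gamma(\omega))=\sigma_{VN(H)}(\omega)\ni 0,
\]
because $0$ lies in the essential range of $1/w$. Therefore $\gamma(\omega)$ is not invertible.

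The only point needing care is that $\gamma$ preserves the identity; otherwise a unital subalgebra with a different unit could in principle cause trouble. This holds because $\gamma(\lambda^H_e)=\lambda^G_e=1$, so $\gamma$ is unital and the inverse-closedness argument applies directly. Everything else is a direct combination of earlier results, so I expect no real obstacle.
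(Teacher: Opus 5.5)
Your proof is correct, but it establishes the non-invertibility of $\gamma(\omega)$ by a different route from the paper.

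The paper works on the function-algebra side. By Corollary~\ref{c:omega-invertible}, using the nontrivial equivalence (i)$\Leftrightarrow$(iv) that rests on \cite[Proposition 2.6]{gt}, non-invertibility of $\omega$ gives a proper inclusion $A(H,\omega)\subsetneq A(H)$. If $\gamma(\omega)$ were invertible, the same corollary would give $A(G,\gamma(\omega))=A(G)$. Restricting to $H$ via the weighted Herz restriction theorem (Proposition~\ref{t:weighted-resteriction}), together with the classical identity $A(G)|_H=A(H)$, would then force $A(H,\omega)=A(H)$, a contradiction.

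You instead stay inside the von Neumann algebras. Since $\gamma$ is a unital injective $*$-homomorphism, spectral permanence gives $\sigma_{VN(G)}(\gamma(\omega))=\sigma_{VN(H)}(\omega)\ni 0$.

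\begin{itemize}
\item \textbf{What your route buys.} It is more elementary: it needs neither restriction theorem nor the result from \cite{gt}. You only use the trivial direction that invertibility in $M(C^*_r(G))\subseteq VN(G)$ implies invertibility in $VN(G)$. It also yields more, namely that lifting preserves the whole spectrum.
\item \textbf{What the paper's route buys.} It showcases the weighted restriction theorem proved just before. It also states the conclusion directly as properness of $A(G,\gamma(\omega))\subseteq A(G)$, which you recover afterwards from Corollary~\ref{c:omega-invertible}.
\end{itemize}

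Both arguments work verbatim for any closed subgroup $H$ and any non-invertible weight inverse on $H$.
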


 \begin{proof}
 Note that for $H$ isomorphic to $\Bbb{R}$ or $\Bbb{T}$, Example \ref{weight_functions} provides a family of  non-invertible weight inverses $\omega $.   By  Corollary \ref{c:omega-invertible}, the inclusion $A(H,\omega ) \subseteq A(H)$ is proper. Now if $\gamma(\omega )$ is invertible, by the same corollary
 $A(G,\gamma(\omega ))= A(G)$. Consequently,  $A(G,\gamma(\omega ))|_H=A(G)|_H$. But this implies that $A(H,\omega )=A(H)$ which is a contradiction.
 \end{proof}


\section{Completely bounded multipliers of $A(G,\omega )$}\label{s:McbA(G,w)}

Let $G$ be a locally compact group with a weight inverse $\omega $.  Classically, it is known that every element of $MA(G,\omega )$ can be considered as a bounded continuous function on the maximal ideal space of $A(G,\omega )$. But by Theorem \ref{l:G->D(A(G,w))}, we can restrict every element of $MA(G,\omega )$ to a bounded continuous function on $G$. As  $A(G,\omega )$ is a dense subspace of $A(G)$, the restriction of $MA(G)$ to $G$ is injective, i.e. if there are two distinct element in $MA(G,\omega )$, their restrictions to $G$ are also distinct. 
In the light of this observation, from now on we identify every element of $MA(G,\omega )$, and in particular elements of $M_{\rm cb}A(G,\omega )$, the space of completely bounded multipliers on $A(G,\omega)$, with their (bounded continuous) restriction to $G$.
Recall that $A(G,\omega )^*=VN(G)$ and hence has an operator structure inherited from $VN(G)$. In particular, $\bm_\phi\in M_{\rm cb}A(G,\omega )$ if $\bm_\phi^*:VN(G)\to VN(G)$ is completely bounded. 

In this section and the next one, we investigate $M_{\rm cb}(A(G,\omega))$ and some of its natural properties. We start with the following which is a generalization of a characterization of $M_{\rm cb}(A(G))$ to the weighted case from \cite[Theorem 1.6]{canniere-haagerup}. The proof is a modification of the original one and is presented here for the sake of completion.  

\begin{proposition}\label{p:cb-multiplier-A(g,w)}
Let $G$ be a locally compact group with a weight inverse $\omega$ and let  $\phi$ be a function of $G$. Then the following are equivalent.
\begin{itemize}
\item[(i)]{$\bm_\phi \in M_{\rm cb}A(G,\omega )$.}
\item[(ii)]{$\phi \times 1_{SU(2)}$ is a multiplier of $A(G\times SU(2), \omega  \otimes 1)$.}
\end{itemize}
\end{proposition}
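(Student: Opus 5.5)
Both directions will be reduced to the classical statement \cite[Theorem 1.6]{canniere-haagerup} and its proof. We transfer everything to the level of the dual maps on $VN(G)$, and use Proposition~\ref{p:A(w1xw2)}, which gives
$$A(G\times SU(2),\omega\otimes 1)\cong A(G,\omega)\hat\otimes A(SU(2)).$$
Here $1$ is clearly a weight inverse on $SU(2)$, with $\Omega=1$, and Proposition~\ref{p:weights-product} applies.

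\emph{Step 1: identify the dual map.} Let $\bm_\phi$ be a multiplier of $A(G,\omega)$. Since $A(G,\omega)^*=VN(G)$ via $\langle\cdot,\cdot\rangle_\omega$, there is an adjoint $\bm_\phi^*:VN(G)\to VN(G)$. Formula \eqref{eq:point-evaluation} then suggests that $\bm_\phi^*(\lambda_s\omega)=\phi(s)\lambda_s\omega$. In the same way, the multiplier $\phi\times 1$ of $A(G\times SU(2),\omega\otimes1)$, if it exists, has dual map $\bm_\phi^*\otimes\id$ on $VN(G)\bar\otimes VN(SU(2))$. This follows by checking on elementary tensors $(\omega u)\otimes v$, $v\in A(SU(2))$, which span a dense subspace by Proposition~\ref{p:A(w1xw2)}.

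\emph{Step 2: (i)$\Rightarrow$(ii).} If $\bm_\phi^*$ is completely bounded and normal, then $\bm_\phi^*\otimes\id$ is a bounded normal map on $VN(G)\bar\otimes VN(SU(2))$ with norm at most $\|\bm_\phi\|_{cb}$. Its preadjoint is $\bm_\phi\otimes\id$ on $A(G,\omega)\hat\otimes A(SU(2))$. Pointwise on $G\times SU(2)$ this is multiplication by $\phi\times1$. So (ii) holds.

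\emph{Step 3: (ii)$\Rightarrow$(i).} This is the main step. Suppose $\phi\times1$ is a bounded multiplier. By Step 1 and the density argument, $\bm_\phi^*\otimes\id$ is then bounded on $VN(G)\bar\otimes VN(SU(2))$. Now $VN(SU(2))\cong\ell^\infty\text{-}\bigoplus_n M_n$, since $SU(2)$ has an irreducible representation of every dimension $n$. Hence $VN(G)\bar\otimes M_n$ is a corner of $VN(G)\bar\otimes VN(SU(2))$, cut down by a central projection. On that corner $\bm_\phi^*\otimes\id_{M_n}$ is the restriction of the bounded map, so
$$\sup_n\|\bm_\phi^*\otimes\id_{M_n}\|\le\|\bm_{\phi\times1}\|<\infty.$$
Thus $\bm_\phi^*$ is completely bounded, and its preadjoint $\bm_\phi$ lies in $M_{\rm cb}A(G,\omega)$.

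The obstacle is making Step 1 rigorous in (ii)$\Rightarrow$(i). We must check that a bounded multiplier $\phi\times1$ really acts as $\bm_\phi\otimes\id$. In particular, we need that $\phi$ itself multiplies $A(G,\omega)$: restrict to $u\otimes v$ with $v$ a coefficient of a fixed representation, and use Theorem~\ref{l:G->D(A(G,w))} to identify functions by their values on $G$. We also need to verify that the central projections $p_n\in VN(SU(2))$ commute appropriately with $\bm_\phi^*\otimes\id$. This holds because $1\otimes p_n$ corresponds to multiplication by characters in the second variable, which the map clearly respects.
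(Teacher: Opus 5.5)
Your proposal is correct and follows essentially the paper's argument: via Proposition~\ref{p:A(w1xw2)} and $VN(SU(2))\cong\bigoplus_n M_n$, the adjoint of $\bm_{\phi\times 1}$ is $\bigoplus_n \bm_\phi^*\otimes\id_{M_n}$, so $\|\bm_{\phi\times1}\|=\sup_n\|\bm_\phi^*\otimes\id_{M_n}\|=\|\bm_\phi\|_{\rm cb}$. In (ii)$\Rightarrow$(i) you add a step the paper passes over, namely first showing that $\phi$ multiplies $A(G,\omega)$ (for instance by slicing at $e\in SU(2)$); the corner invariance you worry about then follows directly from $\bm_{\phi\times1}^*(x\otimes y)=\bm_\phi^*(x)\otimes y$, since $VN(G)\bar\otimes M_n=VN(G)\odot M_n$ (note that $1\otimes p_n$ acts on the predual by convolution with a character, not by multiplication).
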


\begin{proof}
Recall that $VN(SU(2))= \bigoplus_{k \in \Bbb{N}} \Bbb{M}_k(\Bbb{C})$.  Therefore,
\begin{eqnarray*}
\left(A(G,\omega ) \hat{\otimes} A(SU(2))\right)^* &\cong & CB( A(G,\omega ), VN(SU(2)))\\
&\cong & CB( A(G,\omega ), \bigoplus_{k \in \Bbb{N}} \Bbb{M}_k(\Bbb{C}) )\\
&\cong &  \bigoplus_{k \in \Bbb{N}}  CB( A(G,\omega ),\Bbb{M}_k(\Bbb{C}) )\\
&\cong &  \bigoplus_{k \in \Bbb{N}}  \Bbb{M}_k( VN(G)  ).
\end{eqnarray*}
$(i) \Rightarrow (ii)$.  If $\bm_\phi \in M_{\rm cb}A(G,\omega )$, therefore
\begin{eqnarray*}
\norm{\phi\times 1_{SU(2)}}_{MA(G\times SU(2), \omega  \otimes 1)} &=& \norm{\bm_{\phi \times 1_{SU(2)}}^* }_{\mathcal{B}(VN(G) \bar{\otimes} VN(SU(2)) )}\\
&=&  \sup_{k \in \Bbb{N}} \norm{\bm_\phi \otimes \id_k} = \norm{ \bm_\phi}_{M_{\rm cb}A(G,\omega )}.
\end{eqnarray*}
$(ii) \Rightarrow (i)$. For each $k_0 \in \Bbb{N}$, note that
\[
\norm{\bm_\phi^* \otimes \id_{k_0}} \leq \norm{ \bm_{\phi \times 1_{SU(2)}}^*}_{\mathcal{B}(VN(G) \bar{\otimes} VN(SU(2)))}
= \norm{ \bm_{\phi \times 1_{SU(2)}}}_{MA(G \times SU(2), \omega \otimes 1)}< \infty.
\]
Hence, $\norm{\bm_\phi }_{M_{\rm cb}A(G, \omega )} <\infty$.
\end{proof}

The following statement shows that $B_r(G,\omega )$ is a subalgebra of  $M_{\rm cb}A(G,\omega )$.

\begin{cor}\label{c:Br(G,w)-cb-multipliers}
Let $G$ be a locally compact group and $\omega $ be a weight inverse on $G$. Then $B_r(G,\omega )$ is injectively embedded into $M_{\rm cb}A(G,\omega )$.
\end{cor}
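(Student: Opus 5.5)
Proposal. The natural route is to reduce the statement to Theorem~\ref{t:main} applied to the product group, combined with Proposition~\ref{p:cb-multiplier-A(g,w)}. Fix $\phi=\omega v\in B_r(G,\omega)$ with $v\in B_r(G)$. By Theorem~\ref{t:main}, $A(G,\omega)$ is an ideal in $B_r(G,\omega)$, so $\bm_\phi:\omega u\mapsto(\omega v)(\omega u)$ maps $A(G,\omega)$ into itself. By the formula \eqref{eq:point-wise-multiplication} its norm is at most $\|v\|_{B_r(G)}=\|\phi\|_{B_r(G,\omega)}$. Thus $\phi$ is a bounded multiplier, and it remains to prove complete boundedness and injectivity.

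For complete boundedness, I will pass to $G\times SU(2)$ and use the weight inverse $\omega\otimes 1$. This is a weight inverse by Proposition~\ref{p:weights-product}, since $1$ is trivially a weight inverse on $SU(2)$ with $\Omega=1$. Because $SU(2)$ is compact, $1_{SU(2)}\in A(SU(2))\subset B_r(SU(2))$, and $v\otimes 1_{SU(2)}\in B_r(G\times SU(2))$. Hence
\[
\phi\times 1_{SU(2)}=(\omega\otimes 1)(v\otimes 1_{SU(2)})\in B_r(G\times SU(2),\omega\otimes 1).
\]
Applying the ideal statement of Theorem~\ref{t:main} to $G\times SU(2)$ with weight $\omega\otimes1$ shows that $\phi\times 1_{SU(2)}$ is a multiplier of $A(G\times SU(2),\omega\otimes 1)$. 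Its norm is bounded by $\|v\otimes 1\|_{B_r}=\|v\|$, using the contractivity of the product. Proposition~\ref{p:cb-multiplier-A(g,w)} then gives $\bm_\phi\in M_{\rm cb}A(G,\omega)$ with $\|\bm_\phi\|_{cb}\le\|\phi\|_{B_r(G,\omega)}$. Alternatively, one could cite directly that the product on $B_r(G,\omega)$ is completely contractive, as stated in Theorem~\ref{t:main}. The step needing most care is checking that the identification of $B_r(G\times SU(2),\omega\otimes 1)$ with the elements above is legitimate. This should follow from $C_r^*(G\times SU(2))=C_r^*(G)\otimes C_r^*(SU(2))$ and the definition of the slice pairings.

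Injectivity comes from faithfulness. Suppose $\phi=\omega v$ induces the zero multiplier, so $(\omega v)(\omega u)=0$ for all $u\in A(G)$. The final part of the proof of Theorem~\ref{t:main} then gives $\omega v=0$. As functions, the multiplier attached to $\phi$ is $\phi$ itself: its values on $G$ under the embedding of Theorem~\ref{l:G->D(A(G,w))} are pointwise products, since the multiplication is pointwise. So the map $B_r(G,\omega)\to M_{\rm cb}A(G,\omega)$ is the restriction-to-$G$ identification and is injective.

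I expect the main obstacle to be the product-group bookkeeping in the complete boundedness step, rather than the injectivity argument.
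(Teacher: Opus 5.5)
Your proposal is correct and follows essentially the same route as the paper: realize $\phi\times 1_{SU(2)}$ as an element of $B_r(G\times SU(2),\omega\otimes 1)$, use the ideal property from Theorem~\ref{t:main} to see it is a multiplier of $A(G\times SU(2),\omega\otimes 1)$, conclude via Proposition~\ref{p:cb-multiplier-A(g,w)}, and get injectivity from the faithfulness argument. The only difference is cosmetic. The paper does the product-group bookkeeping you flagged by writing $\phi\times 1_{SU(2)}$ concretely as a coefficient of $\pi\times\lambda^{SU(2)}$ with vectors $\xi\otimes 1_{SU(2)}$ and $\eta\otimes 1_{SU(2)}$, whereas you use the tensor product of functionals $v\otimes 1_{SU(2)}$.
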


\begin{proof}
Fix $\omega u$ for $u  \in B_r(G)$. Then $u(s)=(\pi(s)\xi,\eta )$, $s\in G$, for some representation $\pi:G\to B(H_\pi)$, with $\xi, \eta\in H_\pi$, which extends to a non-degenerate $*$-representation of $C^*_r(G)$ to $B(H_\pi)$. Since $\pi$ then extends to a $*$-representation of the multiplier algebras $M(C^*_r(G))$, also denoted by 
$\pi$, we have  $\omega u(s)=( \pi(s) \pi(\omega )\xi,\eta )$, $s\in G$.  Write $1_{SU(2)}$ for the constant function 1 on $SU(2)$. Therefore,  for each pair $(s,t) \in G \times SU(2)$, 
\begin{eqnarray*}
(\omega u \otimes 1_{SU(2)}) (s,t) &=& ( \pi(s) (\pi(\omega )\xi), \eta  )\\
&=& ( (\pi\times \lambda^{SU(2)} )(s,t) \left(\pi(\omega )\xi \otimes 1_{SU(2)}\right), \eta \otimes 1_{SU(2)} )\\
&=& ( (\pi\times \lambda^{SU(2)} )(s,t)(\pi\otimes \text{id})(\omega \otimes 1) \left(\xi \otimes 1_{SU(2)}\right), \eta \otimes 1_{SU(2)})\\
&=& (\omega \otimes 1) ( (\pi\times \lambda^{SU(2)})(s,t) \left(\xi \otimes 1_{SU(2)}\right), \eta \otimes 1_{SU(2)} ),
\end{eqnarray*}
where $ (\omega \otimes 1) ( (\pi\times \lambda^{SU(2)})(s,t) \left(\xi \otimes 1_{SU(2)}\right), \eta \otimes 1_{SU(2)})$ is an element in $B_r(G \times SU(2), \omega  \otimes 1)$. By Theorem~\ref{t:main}, we know that this is a  Banach algebra with $A(G \times SU(2), \omega  \otimes 1)$ as an ideal in it. Therefore, $B_r(G \times SU(2), \omega  \otimes 1)$ sits  in $MA(G \times SU(2), \omega  \otimes 1)$. So, by Proposition~\ref{p:cb-multiplier-A(g,w)}, $\omega u$ is in $M_{\rm cb}A(G,\omega )$. By Theorem~\ref{t:main}, we know that $B_r(G,\omega )$ is a faithful Banach algebra, therefore, a non-zero $\omega u$ cannot be a zero multiplier; that is the inclusion is injective.
\end{proof}

\medskip

Let $\phi$ be a multiplier of $A(G, \omega )$ which induces the multiplication map $\bm_\phi:A(G, \omega ) \rightarrow A(G, \omega )$. Hence
\[
\bm_\phi( (\omega u)(\omega v))= \bm_\phi(\omega u) (\omega v) = (\omega u) \bm_\phi(\omega v).
\]
Since the dual space of $A(G, \omega )$ is $VN(G)$, we have that the map $\bm^*_\phi: VN(G)\rightarrow VN(G)$ is given by
\[
\langle \omega u, \bm^*_\phi(x)\rangle_\omega  := \langle \bm_\phi (\omega u), x \rangle_\omega,  \quad \quad  u\in A(G).
\]

When $G$ is abelian, then $M_{\rm cb}(A(G,\omega))=M(A(G,\omega))$ and by \cite{gaudry}, it can be identified, via the Fourier transform with the weighted measure algebra $M(w)\subset M(\widehat G)$ on $\widehat{G}$, here $M(\widehat{G})$ is the measure algebra of $\widehat G$ and $w$ is a continuous submultiplicative function on $\widehat G$ related to $\omega$: $|\omega|=\mathcal F^{-1}M_{1/w}\mathcal F$.  In particular, $M_{\rm cb}(A(G,\omega))$ becomes a subspace of $M_{\rm cb}(A(G))$. We will show below that this embedding also hold for a large class of non-abelian groups.  


\medskip

By \cite{boz, jol}, completely bounded multipliers of $A(G)$ are related to Schur multipliers. So we start by recalling the notion of measurable Schur multipliers. Let $(X,\mu)$ be a measure space and $\varphi\in L^\infty(X\times X,\mu\times \mu)$. We let 
$$S_\varphi: L^2(X\times X, \mu\times \mu)\to L^2(X\times X,\mu\times\mu), k\mapsto\varphi k.$$
We may identify this map with a (densely defined) linear map on $K(L^2(X,\mu))$, the space of compact operators on $L^2(X,\mu)$, 
acting by $S_\varphi(I_k)=I_{\varphi k}$, where $I_k$ is the Hilbert-Schmidt kernel operator given by $(I_kg)(y)=\int_X k(x,y)g(x)d\mu(y)$. 
We say that $\varphi$ is a (measurable) Schur multiplier with respect to $(X\times X,\mu\times \mu)$ or simply on $X$ if the measure is clear, if $S_\varphi$ extends to a bounded map on the whole $K(L^2(X,\mu))$. It has then a unique weak* continuous extension to $B(L^2(X,\mu))$ which will be still denoted by $S_\varphi$. We note that $S_\varphi$ is automatically completely bounded.  By \cite{peller} (see also \cite{spronk, tt_survey}),   $\varphi$ is a Schur multiplier  on $X$ if and only if there exists a Hilbert space $\cH$ and {\it bounded} measurable functions $\xi, \eta$ from $X$ to $\cH$ such that $\varphi(x,y) = (\xi(x), \eta(y))_{\cH}$ for almost all $x, y$ in $X$, and the space of Schur multipliers  can be identified with the weak*-Haagerup tensor product $L^\infty(X,\mu)\otimes^{w^*h}L^\infty(X,\mu)$, see \cite{blecher-smith}  for the definition. The norm, $\|\varphi\|$, of the Schur multiplier $\varphi$ is by definition the norm of the map $S_\varphi$ and equal to $$\inf\{\|\xi\|_{L^\infty(X,\cH)}\|\eta\|_{L^\infty(X,\cH)}: \varphi(x,y)=(\xi(x),\eta(y))_{\cH} \text{ a.e.}\}.$$ 

\begin{proposition}\label{P:weighted cb mult-Schur mult}
Let $G$ be a locally compact group, and let $\omega$ be a weight inverse on it. Suppose that $\phi \in M_{\rm cb}A(G,\omega )$. Then 
for every $x\in M(C^*_r(G))$ and $\zeta$ and $\theta$ in $L^2(G)$, the mapping 
\begin{align}\label{Eq:weighted multiplier as Schur Mult}
    (t,s)\mapsto \phi(ts^{-1})(\lambda_s^*\omega x \lambda_s\zeta,\theta) \ , \ G\times G \to \mathbb{C},
\end{align}
is a Schur multiplier on $G$.  
\end{proposition}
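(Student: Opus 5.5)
Following Jolissaint's approach, we realize the function in \eqref{Eq:weighted multiplier as Schur Mult} as the symbol of a normal completely bounded map on $B(L^2(G))$ that is bimodular over $L^\infty(G)$. The first step is to transfer $\bm_\phi^*$ from $VN(G)$ to an $L^\infty(G)$-bimodule map. Let $W$ be the fundamental unitary on $L^2(G\times G)$, $(W\xi)(s,t)=\xi(s,st)$ or an equivalent version. It implements $\Gamma(x)=W^*(1\otimes x)W$, and
\[
W(\lambda_s\otimes\lambda_s)W^*=\lambda_s\otimes 1 .
\]
Since $\bm_\phi^*$ is a normal completely bounded map on $VN(G)$, the map $\id\otimes\bm_\phi^*$ is defined on $B(L^2(G))\bar\otimes VN(G)$. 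We then look at
\[
\Psi(T)=W^*\bigl((\id\otimes \bm_\phi^*)(W T W^*)\bigr)W
\]
on the appropriate von Neumann algebra. The natural choice is to work inside $L^\infty(G)\bar\otimes VN(G)\rtimes\ldots$, but more concretely we fix $x\in M(C_r^*(G))$ and $\zeta,\theta$ and slice.

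The core computation concerns the operators $\lambda_t\omega x\lambda_s^*$, or more precisely the kernel-level identity. The weighted multiplier acts as $\bm_\phi^*(y\omega)=\phi\cdot(y\omega)$ in the sense that for $y=\lambda_r$ we get $\bm_\phi^*(\lambda_r\omega)=\phi(r)\lambda_r\omega$. This follows from \eqref{eq:point-evaluation}:
\[
\langle \omega u,\bm_\phi^*(\lambda_r\omega)\rangle_\omega=(\phi\cdot\omega u)(r)=\phi(r)\langle\omega u,\lambda_r\omega\rangle_\omega .
\]
By weak$^*$ continuity and density, $\bm_\phi^*$ restricted to $VN(G)\omega$ acts like a Fourier-type multiplier. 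We then consider the normal cb map $\Theta=\id\otimes\bm_\phi^*$ on $B(L^2(G))\bar\otimes VN(G)$, which is completely bounded with $\|\Theta\|_{cb}=\|\phi\|_{M_{\rm cb}A(G,\omega)}$. We apply it to elements of the form $(M_f\otimes 1)\Gamma'(\omega x)(M_g\otimes 1)$, where $\Gamma'$ is the coaction $y\mapsto W^*(1\otimes y)W$ expressed via multiplication operators. The expected identity is that $\Theta$ maps $\int \lambda_s^*\omega x\lambda_s\, e_{t,s}$-type matrix entries to ones weighted by $\phi(ts^{-1})$. Here the key point is that conjugation by $\lambda_s$ commutes with the multiplier, since $\bm_\phi^*(\lambda_a y\lambda_b)=\lambda_a\bm_\phi^*(y)\lambda_b$ for $y\in VN(G)\omega$. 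Indeed $A(G,\omega)$ is a translation-invariant module: $\lambda_a(\omega u)=\omega(\lambda_a u)$ because $VN(G)$ acts on $A(G)$ and the multiplier commutes with translations.

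Having this, we slice with the vector functional $\omega_{\zeta,\theta}$ on the second leg. The map $T\mapsto(\id\otimes\omega_{\zeta,\theta})\circ\Theta$ yields a normal cb map on $B(L^2(G))$ that is $L^\infty(G)$-bimodular. By the standard characterization (Haagerup/Smith, cf.\ \cite{blecher-smith}), such a map is a Schur multiplier, and we identify its symbol as $(t,s)\mapsto\phi(ts^{-1})(\lambda_s^*\omega x\lambda_s\zeta,\theta)$. Alternatively, we can write directly the factorization $\varphi(t,s)=(\xi(t),\eta(s))$ with bounded $\xi,\eta$. Using the Stinespring-type decomposition of the cb map $\bm_\phi^*$ on $VN(G)$ (Haagerup–Paulsen–Wittstock), we write $\bm_\phi^*(y)=V^*\pi(y)U$. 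The bounded vector-valued functions are then $\xi(t)=\pi(\lambda_t)\ldots$, combined with $\omega x$ split as a product in $M(C_r^*(G))$ and with $\zeta,\theta$.

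The main obstacle is justifying the intertwining with translations on non-elements of $VN(G)\omega$. We need $\bm_\phi^*(\lambda_t\omega x\lambda_s^*)=\phi(ts^{-1})$-weighted information, but $\bm_\phi^*$ is only directly computed on $\lambda_r\omega$. To handle it, we would first treat $x\in C_r^*(G)$ given as $x=\lambda(f)$ with $f\in L^1(G)$ and approximate in the strict topology. We would write $\omega x\lambda_s=\int f(r)\,\omega\lambda_{r s}\,dr$, then use that $\omega$ commutes with the $\lambda$'s, since $VN(G)$ is not abelian but $\omega\lambda_r$ versus $\lambda_r\omega$ both lie in $VN(G)\omega$ via the density condition (1). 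Finally we pass to general $x\in M(C_r^*(G))$ by strict density and the weak$^*$ closedness of the unit ball of Schur multipliers.
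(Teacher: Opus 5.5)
Your proposal has a genuine gap, because the identities that are supposed to carry it are false for a general weight inverse.

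First, $\bm_\phi^*$ is a Fourier-type multiplier and is not translation-covariant. From $\bm_\phi^*(\lambda_r\omega)=\phi(r)\lambda_r\omega$ you get $\bm_\phi^*(\lambda_a\lambda_r\omega)=\phi(ar)\lambda_{ar}\omega$, whereas $\lambda_a\bm_\phi^*(\lambda_r\omega)=\phi(r)\lambda_{ar}\omega$. Already for $\omega=1$, the map $\lambda_r\mapsto\phi(r)\lambda_r$ commutes with conjugation by $\lambda_a$ only if $\phi$ is a class function.

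Second, pointwise multiplication by $\phi$ does not commute with the translation action of $VN(G)$ on $A(G)$. Also, $\lambda_a(\omega u)=\omega(\lambda_a u)$ holds for all $u$ exactly when $\omega$ commutes with $\lambda_a$.

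Third, $\omega$ is not central in general; central weights are singled out later in the paper as a special case. Condition (1) of Definition~\ref{D:Weight} is only a density statement and gives no commutation relation; in general $\omega\lambda_r$ need not lie in $VN(G)\omega$.

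These errors sink each of your routes. The slice/bimodularity route identifies the symbol only through an ``expected identity'' that rests on these claims. The Wittstock route breaks off with ellipses exactly where the needed identity should appear. Your last paragraph explicitly uses the false commutation of $\omega$ with the $\lambda$'s.

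The missing observation is that you never need to apply $\bm_\phi^*$ to $\lambda_t\omega x\lambda_s^*$. You only need it on the elements $\lambda_r\omega$, where you correctly derived $\bm_\phi^*(\lambda_r\omega)=\phi(r)\lambda_r\omega$ from \eqref{eq:point-evaluation}. The function is built so that $x$ and the outer translations stay outside $\bm_\phi^*$: write $\lambda_s^*\omega x\lambda_s=\lambda_t^*(\lambda_t\lambda_s^*\omega)x\lambda_s$. Take a Wittstock factorization $\bm_\phi^*(y)=V_2^*\pi(y)V_1$. You may take $\pi$ normal since $\bm_\phi^*$ is weak$^*$-continuous, which makes the vector functions below continuous. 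Since $\pi$ is a $*$-homomorphism,
\[
\phi(ts^{-1})\lambda_s^*\omega x\lambda_s=\lambda_t^*\,\bm_\phi^*(\lambda_{ts^{-1}}\omega)\,x\lambda_s=\lambda_t^*V_2^*\pi(\lambda_t)\pi(\lambda_s)^*\pi(\omega)V_1x\lambda_s .
\]
Hence $\phi(ts^{-1})(\lambda_s^*\omega x\lambda_s\zeta,\theta)=(\xi(s),\eta(t))_{\cH}$, where $\xi(s)=\pi(\lambda_s)^*\pi(\omega)V_1x\lambda_s\zeta$ and $\eta(t)=\pi(\lambda_t)^*V_2\lambda_t\theta$. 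These are bounded by $\|V_1\|\|x\|\|\zeta\|$ and $\|V_2\|\|\theta\|$ respectively. This is the paper's argument. It needs no commutation of $\omega$ with translations, no covariance of $\bm_\phi^*$, and no approximation of $x$, so it applies directly to every $x\in M(C^*_r(G))$.
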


\begin{proof}
Note first that by Lemma~\ref{l:G->D(A(G,w))}, for each $s \in G$, the point evaluation at $s$  belongs to $A(G,\omega )^*$. Since $\bm^*_\phi$ is a completely bounded mapping from $VN(G)$ to itself, by  Wittstock's factorization theorem 
(\cite[Theorem 8.4]{paulsen}), there exists a Hilbert space $\cH$, a nondegenerate $*$-representation $\pi: VN(G) \rightarrow B({\cH})$ and two bounded operators $V_1, V_2$ from $L^2(G)$ to ${\cH}$ such that $\bm_\phi^*(x) = V_2^* \pi(x)V_1$ for $x  \in VN(G)$ and $\norm{\bm_\phi}_{M_{\rm cb}A(G, \omega )} = \norm{V_1}\norm{V_2}$. Therefore, for every $s \in G$ using \eqref{eq:point-evaluation}, we get
\[
\phi(s) \lambda_s \omega  = \bm_\phi^*(\lambda_s \omega )  = V_2^* \pi(\lambda_s\omega  ) V_1 = V_2^* \pi(\lambda_s)\pi(\omega  ) V_1.
\]
and
\[
\phi(ts^{-1}) \lambda_t\lambda_{s}^* \omega   = V_2^* \pi(\lambda_t)\pi(\lambda_s)^*\pi(\omega  ) V_1.
\]
Let $x\in M(C^*_r(G))$ and $\zeta$ and $\theta$ be vectors in $L^2(G)$. Then
\begin{eqnarray*}\phi(ts^{-1})(\lambda_s^*\omega x \lambda_s\zeta,\theta)=(\lambda_t^*V_2^* \pi(\lambda_t)\pi(\lambda_s)^*\pi(\omega  ) V_1 x\lambda_s\zeta,\theta)\\
=(\pi(\lambda_s)^*\pi(\omega  ) V_1 x\lambda_s\zeta, \pi(\lambda_t)^*V_2\lambda_t\theta).
\end{eqnarray*}
Let $\xi(s)=\pi(\lambda_s)^*\pi(\omega  ) V_1 x\lambda_s\zeta$ and
$\eta(t)=\pi(\lambda_t)^*V_2\lambda_t\theta$. Then, both $\xi$ and $\eta$ are bounded with
$$\phi(ts^{-1})(\lambda_s^*\omega x \lambda_s\zeta,\theta)=(\xi(s),\eta(t))_{\cH}.$$
Hence the mapping in \eqref{Eq:weighted multiplier as Schur Mult} is a Schur mutiplier. 

\end{proof}

We say that $u:G\to\mathbb C$ belongs to  $A(G)$ locally (write $u\in A(G)^{\rm loc}$) if for any $t\in G$ there is a neighbourhhood $U$ of $t$ and $v\in A(G)$ such that $u(s)=v(s)$ for any $s\in U$.

Let $A$ be a unital $G$-$C^*$-algebra, i.e. $A$ is a unital $C^*$-algebra and $G$ acts continuously on $A$ by a $*$-automorphism. For every $a\in A$, we let $D_{A,G}(a)$ be the closure of the convex hull of the orbit $\{g\cdot a: g\in G \}$ in $A$.

\begin{theorem}\label{mcb}
Let $G$ be a locally compact groups and $\omega $ be a weight inverse on it. Then the following hold:
\begin{enumerate}
    \item 
$M_{\rm cb}A(G,\omega )$ is a subspace of $A(G)^{\rm loc}$; 
\item $M_{\rm cb}A(G,\omega )$ is a subspace of $M_{\rm cb}A(G)$ if there exists $x\in C^*_r(G)$ such that  $0\notin D_{M(C^*_r(G)),G}(x)$. 
\end{enumerate}
\end{theorem}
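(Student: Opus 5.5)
Both parts rest on Proposition~\ref{P:weighted cb mult-Schur mult} combined with the classical Bo\.zejko--Fournier/Jolissaint characterisation: a continuous function $\psi$ on $G$ lies in $M_{\rm cb}A(G)$ if and only if $(t,s)\mapsto\psi(ts^{-1})$ is a Schur multiplier on $G$, with equal norms. The plan is to manufacture, from $\phi\in M_{\rm cb}A(G,\omega)$, Toeplitz-type Schur multipliers $\phi(ts^{-1})\psi(ts^{-1})$, and then deduce statements about $\phi$ itself.

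\textbf{Part (1).} Fix $x\in C^*_r(G)$ and $\zeta,\theta\in L^2(G)$. The first task is to rewrite $(\lambda_s^*\omega x\lambda_s\zeta,\theta)$ as a Toeplitz factor times simpler Schur multipliers. Choose $\zeta,\theta$ so that the kernel is of the form $\psi(ts^{-1})a(s)b(t)$ up to tractable terms. Failing that, average over $\lambda_s$, or integrate the Schur multiplier against $f(s)g(t)$ for $f,g\in L^1(G)$; Schur multipliers are stable under such averaging of the form $(t,s)\mapsto\int \varphi(tr,sr)\,h(r)\,dr$. The aim is to show that $\phi\cdot v\in M_{\rm cb}A(G)$ for some $v=\omega u\in A(G,\omega)$ built from $\omega x$. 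Since $\omega u\in A(G)$ and $\phi\omega u\in A(G,\omega)\subset A(G)$ directly, a cleaner route is available: $\phi v\in A(G)$ for every $v\in A(G,\omega)$, simply because $\phi$ multiplies $A(G,\omega)$. So it suffices, for each $t_0\in G$, to find $v\in A(G,\omega)$ with $v\equiv 1$ near $t_0$; then on that neighbourhood $\phi=\phi v$ coincides with an element of $A(G)$. To obtain such $v$, use regularity. By Theorem~\ref{l:G->D(A(G,w))} the points of $G$ embed in the spectrum, so $A(G,\omega)$ has no common zeros on $G$, and it is dense in $A(G)$. Take $v_1\in A(G,\omega)$ with $v_1(t_0)\neq0$, and $w\in A(G)$ with $w\equiv 1/v_1$ near $t_0$ (by Wiener-type inversion in $A(G)$). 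Then $v_1w\equiv1$ near $t_0$. This product lies in $A(G)$ but not necessarily in $A(G,\omega)$, so the argument changes to computing $\phi v_1$ and dividing: $\phi=(\phi v_1)w$ near $t_0$, and $\phi v_1\in A(G)$, $w\in A(G)$, hence the product lies in $A(G)$. This settles (1) without Schur multipliers.

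\textbf{Part (2).} Suppose $x\in C^*_r(G)$ satisfies $0\notin D:=D_{M(C^*_r(G)),G}(x)$, where $G$ acts by conjugation $s\cdot y=\lambda_s^*y\lambda_s$. By Hahn--Banach there is a functional $f\in M(C^*_r(G))^*$ with $\mathrm{Re}\,f(y)\ge\delta>0$ for all $y\in D$. Apply Proposition~\ref{P:weighted cb mult-Schur mult} with $\omega x$ replaced by suitable elements. Represent $f$ via vectors (through a representation) to obtain kernels $(t,s)\mapsto\phi(ts^{-1})f(\lambda_s^*\omega x'\lambda_s)$. These are Schur multipliers, because the Wittstock-type decomposition in the proof of Proposition~\ref{P:weighted cb mult-Schur mult} works for any bounded $\pi$-representation of the functional. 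The objective is to divide out the factor $f(\lambda_s^*\,\cdot\,\lambda_s)$, which depends only on $s$; dividing a Schur multiplier by a function of one variable bounded away from zero keeps it a Schur multiplier. The relevant element is $\omega x'$, whereas the hypothesis concerns $x$, so one uses that $\omega C^*_r(G)$ is dense and approximates $x$ by $\omega x'$. The orbit hull of the approximant still avoids a neighbourhood of $0$, so $\mathrm{Re}\,f(\lambda_s^*\omega x'\lambda_s)\ge\delta/2$. Dividing then gives that $\phi(ts^{-1})$ is a Schur multiplier, hence $\phi\in M_{\rm cb}A(G)$.

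The main obstacle is making the separating functional compatible with Proposition~\ref{P:weighted cb mult-Schur mult}: that proposition uses vector functionals on $L^2(G)$, while $f$ lives on $M(C^*_r(G))$. The first attempt will be to redo that proof with $f$ realised through the universal representation of $M(C^*_r(G))$. If needed, extend $\bm_\phi^*$ to the enveloping algebra, or use that $\bm_\phi^*$ commutes with the relevant bimodule actions.
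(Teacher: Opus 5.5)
Your proof of part (1) is correct and takes a different, more elementary route than the paper. Your proof of part (2) follows the paper's strategy but leaves its key technical step unproved.

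\emph{Part (1).} You only use three facts:
\begin{enumerate}
\item $\bm_\phi(v)=\phi v$ pointwise on $G$ for $v\in A(G,\omega)$;
\item $A(G,\omega)\subseteq A(G)$ has no common zeros on $G$;
\item local inversion in the regular algebra $A(G)$.
\end{enumerate}
For (3), if $v_1(t_0)\neq 0$, take a compact neighbourhood $E$ of $t_0$ on which $v_1\neq 0$. Then $A(G)/k(E)$ is unital with maximal ideal space $E$, so $v_1+k(E)$ is invertible, giving $w\in A(G)$ with $v_1w=1$ on $E$. Hence $\phi=(\phi v_1)w$ on $E$, and both factors lie in $A(G)$. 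This actually proves the stronger inclusion $MA(G,\omega)\subseteq A(G)^{\rm loc}$. It needs neither complete boundedness, nor the reduction to positive $\omega$, nor \cite{stt_closable}.

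The paper instead sets $x=1$, $\theta=\zeta$ in Proposition~\ref{P:weighted cb mult-Schur mult} and divides by $\|\omega^{1/2}\lambda_s\zeta\|^2\neq 0$. This shows $(t,s)\mapsto\phi(ts^{-1})$ is a Schur multiplier on $K\times K$ for each compact $K$, and the paper then quotes \cite{stt_closable}. Its only advantage is that it sets up the machinery reused in (2). Your exploratory opening sentences of (1) play no role and can be dropped.

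\emph{Part (2).} The gap is exactly at the ``main obstacle'' you name, and your stated justification does not close it. Hahn--Banach gives a norm-continuous, generally non-normal functional $f$. Proposition~\ref{P:weighted cb mult-Schur mult}, however, only yields Schur multipliers from vector functionals.

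The Wittstock factorisation reads $\phi(ts^{-1})\lambda_s^*\omega x'\lambda_s=A(t)B(s)$, where $A(t)=\lambda_t^*V_2^*\pi(\lambda_t)$ and $B(s)=\pi(\lambda_s)^*\pi(\omega)V_1x'\lambda_s$. These factors map between $L^2(G)$ and the Wittstock space $\cH$; they are not in $M(C^*_r(G))$. So realising $f$ in the universal representation of $M(C^*_r(G))$ does not factor $f(A(t)B(s))$.

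Suppose instead you extend $f$ to $B(L^2(G)\oplus\cH)$ and write it as $(\rho(\cdot)\alpha,\beta)$. You get $f(A(t)B(s))=(\rho(B(s))\alpha,\rho(A(t))^*\beta)$ with bounded factors. But $\rho$ is not normal, so these functions need not be measurable, and you have not produced a measurable Schur multiplier.

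The paper closes this by linearity plus weak$^*$ compactness:
\begin{enumerate}
\item For $u\in\mathcal{T}(L^2(G))$, the kernels $\Phi_u(t,s)=\phi(ts^{-1})\langle\lambda_s^*\omega x\lambda_s,u\rangle$ are Schur multipliers with $\|\Phi_u\|\le\|V_1\|\|V_2\|\|x\|\|u\|_1$.
\item Approximate the separating $S\in B(L^2(G))^*$ by a bounded net of trace-class operators.
\item Take a weak$^*$ cluster point in the dual space $L^\infty(G)\otimes^{w^*h}L^\infty(G)$; this shows $\phi(ts^{-1})\langle S,\lambda_s^*\omega x\lambda_s\rangle$ is a Schur multiplier.
\end{enumerate}
Alternatively, your non-measurable factorisation could be rescued by two observations. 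The kernel is continuous, since $s\mapsto\lambda_s^*y\lambda_s$ is norm continuous for $y\in C^*_r(G)$. And a continuous kernel that is a Schur multiplier on every finite subset, with a uniform bound, is a measurable Schur multiplier. Some such argument has to be supplied.

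Once this step is in place, the rest of your (2) is the paper's argument: choose $\|x-\omega x'\|<\delta/(2\|f\|)$ so that $\mathrm{Re}\,f(\lambda_s^*\omega x'\lambda_s)\ge\delta/2$, divide by this continuous function of $s$, and apply Bo\.zejko--Fournier/Jolissaint.
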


\begin{proof}
First note that, by \cite[Proposition~2.2]{ors}, $|\omega^*|$ is also a weight inverse that admits the same Beurling-Fourier algebra $A(G,\omega )$. So, without loss of generality, we assume that $\omega $ is a positive element in $VN(G)$.



(i)
By the preceding proposition, for every 
$\zeta$ and $\theta$ in $L^2(G)$, the mapping 
\begin{align*}
   \psi: (t,s)\mapsto \phi(ts^{-1})(\lambda_s^*\omega  \lambda_s\zeta,\theta) \ , \ G\times G \to \mathbb{C},
\end{align*}
is a Schur multiplier on $G$.  Let $\psi(t,s)=(\xi(s),\eta(t))_{\cH}$, $s,t\in G$, be a representation of $\psi$ with $\xi$, $\eta\in L^\infty(G,\cH)$. 
As $\ker\omega =\{0\}$ (by Lemma \ref{l:ker(w-1)}) and hence $\ker \omega^{1/2}=\{0\}$, we have 
$$(\lambda_s^*\omega \lambda_s\zeta,\zeta)=\|\omega^{1/2}\lambda_s\zeta\|^2\ne 0 \ \ \ (s\in G).$$
Let $\tilde\xi(s)=\|\omega^{1/2}\lambda_s\zeta\|^{-2}\xi(s)$, $s\in G$. 
Then
$$\phi(ts^{-1})=(\tilde\xi(s),\eta(t))_{\cH}.$$
As $\|\omega^{1/2}\lambda_s\zeta\|^{-2}$  is continuous and hence bounded on every compact subset $K\subset G$, $\phi|_{K\times K}$ is a Schur multiplier on $K$ for all such $K\subset G$. By \cite[Remark 7.6, Lemma 7.4]{stt_closable},
 we obtain $\phi\in A(G)^{\rm loc}$.

(ii) We first note that the relation \eqref{Eq:weighted multiplier as Schur Mult} implies that for every $x\in C^*_r(G)$ and $u\in \mathcal{T}(L^2(G))$, the trace-class operators on $L^2(G)$, the mapping $\Phi_u:G\times G \to \mathbb{C}$ given by
\begin{align*}
    \Phi_u(t,s)=\phi(ts^{-1})\langle \lambda_s^*\omega x\lambda_s , u\rangle \ \ \ (t,s\in G),
\end{align*}
is a Schur multiplier with
\begin{align*}
    \|\Phi_u\|\leq C \|u\|_{\mathcal{T}(L^2(G))}.
\end{align*}
Now let $S\in \mathcal{T}(L^2(G))^{**}=B(L^2(G))^*$. Take a bounded net of $\{S_i\}_{i\in I} \subset \mathcal{T}(L^2(G))$ such that $S_i$ converges to $S$ in the weak$^*$-topology of $B(L^2(G))^*$. Since the space of Schur multipliers $L^\infty(G)\otimes^{w^*h} L^\infty(G)$ is a dual space, by taking a subnet if necessary, we can assume that $\Phi_{S_i}$ also has a weak$^*$-limit in $L^\infty(G)\otimes^{w^*h} L^\infty(G)$; we denote the limit point as $\Phi_S$. Putting all these together, we obtain that for
every $S\in \mathcal{T}(L^2(G))^{**}=B(L^2(G))^*$, the mapping $\Phi_S:G\times G \to \mathbb{C}$ given by
\begin{align*}
    \Phi_S(t,s)=\phi(ts^{-1})\langle S, \lambda_s^*\omega x \lambda_s \rangle \ \ \ (t,s\in G),
\end{align*}
is a Schur multiplier with
\begin{align*}
    \|\Phi_S\|\leq C \|S\|_{B(L^2(G))^*}.
\end{align*}
Now suppose that (we prove this below) for some $x\in C^*_r(G)$, $0 \notin D_{M(C^*_r(G)),G}(\omega x)$. Since $D_{M(C^*_r(G)),G}(\omega x)$ is a closed convex set in $B(L^2(G))$, it follows from the Hahn-Banach theorem that there is $S\in B(L^2(G))^*$ and $\epsilon>0$ satisfying 
\begin{align*}
   \text{Re}\ \langle S, T\rangle \geq \epsilon \ \ \ \forall T\in D_{M(C^*_r(G)),G}(\omega x).
\end{align*}
However, $D_{M(C^*_r(G)),G}(\omega x)$ contains every element $\lambda^*_s \omega x \lambda_s $, for all $s\in G$. This implies that 
\begin{align*}
    |\langle S, \lambda^*_s \omega x \lambda_s \rangle| \geq \epsilon \ \ \ \forall s\in G.
\end{align*}
In particular, the function $\psi:G\to \mathbb{C}$ given by $\psi(s)=\langle S, \lambda^*_s \omega x\lambda_s \rangle^{-1}$, $s\in G$, belongs to $L^\infty(G)$ so that  $(\psi\otimes 1)\Phi_S$ is a Schur multiplier. This completes the proof, since 
\begin{align*}
    \phi(ts^{-1})=[(\psi\otimes 1)\Phi_S](t,s) \ \ (t,s\in G).
\end{align*}
Finally, if for every $x\in C^*_r(G)$, $0 \in D_{M(C^*_r(G)),G}(\omega x)$, then it follows from the density of $\om C^*_r(G)$ in $C^*_r(G)$ that $0 \in D_{M(C^*_r(G)),G}(x)$ for all $x\in C^*_r(G)$. This contradicts the assumption of part (ii). Hence, for some $x\in C^*_r(G)$, we have $0 \notin D_{M(C^*_r(G)),G}(\omega x)$.  \end{proof}

\begin{rem}\label{rem_weight}
    Inspecting the proof of Theorem \ref{mcb}, one can easily see that the condition $0\notin D_{M(C_r^*(G)), G}(\omega)$ would already imply the inclusion $M_{\rm cb}A(G,\omega)\subset M_{\rm cb}A(G)$. Clearly the condition holds in the case of the trivial weight $\omega=I$ or more generally when $\omega$ is in the center of $M(C_r^*(G))$, while the condition $0\notin D_{M(C_r^*(G)),G}(x)$ can be violated for all $x\in C_r^*(G)$, see Theorem \ref{cb_Lie}. 
\end{rem}

\begin{corollary}
Let $G$ be a locally compact group and $\omega $ be a positive weight inverse on it which satisfies one of the following conditions:
\begin{itemize}
\item $\omega $ is central, i.e. $\omega $ is a central element of $M(C_r^*(G))$;
\item there exists a sub-representation $\pi$ of $\lambda$  such that $\pi(\omega)\geq\varepsilon$ for some $\varepsilon >0$. 
\end{itemize}
 Then $M_{\rm cb}A(G,\omega )$ is a subspace of $M_{\rm cb}A(G)$.
\end{corollary}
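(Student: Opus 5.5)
The plan is to reduce both cases to Remark~\ref{rem_weight}: it suffices to show $0\notin D_{M(C^*_r(G)),G}(\omega)$, where the action of $G$ on $M(C^*_r(G))$ is $T\mapsto \lambda_s^*T\lambda_s$. The proof of Theorem~\ref{mcb}(ii) only uses an element $y=\omega x$ with $0\notin D(y)$ in order to build the functional $S$ with $|\langle S,\lambda_s^*y\lambda_s\rangle|\ge\epsilon$. Proposition~\ref{P:weighted cb mult-Schur mult} holds for all $x\in M(C^*_r(G))$, so we may take $x=1$ and $y=\omega$. In each case I will therefore produce a bounded functional $S$ on $B(L^2(G))$ such that $\operatorname{Re}\langle S,\lambda_s^*\omega\lambda_s\rangle\ge\varepsilon>0$ for all $s$. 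Since such an inequality passes to convex combinations and norm limits, this gives $0\notin D(\omega)$, and then Hahn--Banach is not even needed.

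\emph{Central case.} If $\omega$ is central in $M(C^*_r(G))$, then $\lambda_s^*\omega\lambda_s=\omega$, because $\lambda_s\in M(C^*_r(G))$ (it is a multiplier of $C^*_r(G)$). Hence $D(\omega)=\{\omega\}$. By Lemma~\ref{l:ker(w-1)} we have $\omega\neq0$, so $0\notin D(\omega)$. Alternatively, take any vector $\zeta$: positivity together with $\ker\omega=\{0\}$ gives $(\omega\zeta,\zeta)>0$, and the vector functional $S=(\cdot\,\zeta,\zeta)$ works.

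\emph{Subrepresentation case.} Here $\pi$ is the restriction of $\lambda$ to a closed $\lambda(G)$-invariant subspace $\cK\subseteq L^2(G)$, with projection $P\in\lambda(G)'$, and $\pi(\omega)=P\omega P|_{\cK}\ge\varepsilon$. (Interpreting $\pi(\omega)$ through the extension of $\pi$ to $M(C^*_r(G))$, this is the compression of $\omega$ to $\cK$.) Pick a unit vector $\zeta\in\cK$ and set $S=(\cdot\,\zeta,\zeta)$. Since $\lambda_s\zeta\in\cK$ and $\|\lambda_s\zeta\|=1$,
\[
\langle S,\lambda_s^*\omega\lambda_s\rangle=(\omega\lambda_s\zeta,\lambda_s\zeta)=(\pi(\omega)\lambda_s\zeta,\lambda_s\zeta)\ge\varepsilon .
\]
The lower bound persists under convex combinations and norm limits, so every $T\in D(\omega)$ satisfies $\langle S,T\rangle\ge\varepsilon$. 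Hence $0\notin D(\omega)$.

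The main point to check carefully is the claim in Remark~\ref{rem_weight}. Rerunning the proof of Theorem~\ref{mcb}(ii) with $x=1$, the Schur multiplier $\Phi_S(t,s)=\phi(ts^{-1})(\omega\lambda_s\zeta,\lambda_s\zeta)$ comes directly from Proposition~\ref{P:weighted cb mult-Schur mult} with $x=1$ and a rank-one $u$; no weak$^*$ limit argument is needed. Dividing by the function $s\mapsto(\omega\lambda_s\zeta,\lambda_s\zeta)$, which is bounded below by $\varepsilon$, yields that $(t,s)\mapsto\phi(ts^{-1})$ is a Schur multiplier. By the Bo\.zejko--Fendler/Jolissaint characterization this gives $\phi\in M_{\rm cb}A(G)$.
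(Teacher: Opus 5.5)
Your proposal is correct and follows essentially the same route as the paper. Like the paper, you reduce to Remark~\ref{rem_weight} and then handle the two cases directly. In the central case you note $\lambda_s^*\omega\lambda_s=\omega$. In the second case you take a unit vector $\zeta$ in a $\lambda$-invariant subspace, which gives $(\omega\lambda_s\zeta,\lambda_s\zeta)\ge\varepsilon$, so you can divide the Schur multiplier of Proposition~\ref{P:weighted cb mult-Schur mult} (with $x=1$) by this function. This direct division is exactly what the paper's brief argument implicitly does.
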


\begin{proof} The statement follows from Theorem \ref{mcb} and Remark \ref{rem_weight} by noting that
if $\omega $ is central then $\lambda_s^*\omega \lambda_s=\omega $ and hence $0\notin D_{M(C_r^*(G)), G}(\omega)$.
If $\omega $ satisfies the second condition, then there exists a subspace $U\subset L^2(G)$, invariant with respect to $\lambda$ and such that $(\omega \xi,\xi)\geq \varepsilon$ for any $\xi\in U$, $\|\xi\|=1$. Hence $\sup_{s\in G}(\omega \lambda_s\zeta,\lambda_s\zeta)^{-1}\leq\varepsilon^{-1}$ for any $\zeta\in U$, $\|\zeta\|=1$.  
\end{proof}

Next we  provide a large class of groups for which $M_{\rm cb}(A(G,\omega))$ embeds into Herz-Schur multipliers for any weight $\omega$. 

\begin{corollary}\label{p:MA(G,w)<MA(G)-trace}
Let $G$ be a locally compact group having an open amenable radical, and let $\omega $ be a weight inverse on it. Then $M_{\rm cb}A(G,\omega )$ is a 
subspace of $M_{\rm cb}A(G)$. This, in particular, if $G=G_1\times G_2$, where $G_1$ is amenable and $G_2$ is an [IN]-group. 
\end{corollary}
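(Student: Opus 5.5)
The plan is to apply Theorem~\ref{mcb}(ii). It suffices to find some $x\in C^*_r(G)$ with $0\notin D_{M(C^*_r(G)),G}(x)$. We will produce a bounded, conjugation-invariant linear functional $\tau$ on $M(C^*_r(G))$ with $\tau(x)\neq 0$. Then $\tau(\lambda_s^*x\lambda_s)=\tau(x)$ for every $s$, so by continuity and linearity $\tau\equiv\tau(x)$ on the closed convex hull $D_{M(C^*_r(G)),G}(x)$. Since $\tau(0)=0\ne\tau(x)$, this gives $0\notin D$.

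To construct $\tau$ we use the open amenable radical. Let $R$ be the amenable radical of $G$, assumed open and normal. We will use the known fact (Breuillard--Kalantar--Kennedy--Ozawa, and its locally compact versions) that when $R$ is open, $C^*_r(G)$ admits a tracial state. An explicit candidate is $\tau(y)=\langle \lambda(y)\,\cdot\,\rangle$ composed with the canonical conditional expectation onto $C^*_r(R)$, followed by the state on $C^*_r(R)$ induced by the trivial character. The trivial character of $R$ is weakly contained in $\lambda_R$ because $R$ is amenable.

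Concretely, $E:C^*_r(G)\to C^*_r(R)$ is the restriction map on $A(G)$-duals, dual to extension by zero from the open subgroup. The functional $\tau=1_R\circ E$ is then the positive definite function $1_R$ on $G$, which lies in $B_r(G)$. Indeed, $1_R$ is a limit of $1_R\cdot$(compactly supported positive definite functions on $R$ approximating $1$), and these are matrix coefficients of $\lambda_G$ via the induction $\lambda_R\uparrow G=\lambda_G$.

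Since $R$ is normal, $1_R(sts^{-1})=1_R(t)$, so $\tau$ is a conjugation-invariant state on $C^*_r(G)$. It extends strictly continuously to a state on $M(C^*_r(G))$, and the invariance $\tau(\lambda_s^*y\lambda_s)=\tau(y)$ persists there. Finally, pick $x=\lambda(f)$ with $f\ge 0$ continuous, supported in $R$, and $\int f>0$. Then $\tau(x)=\int_R f\neq0$.

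For the product case $G=G_1\times G_2$, we will note that the amenable radical contains $G_1\times K$. Here $K$ is a compact normal subgroup in a neighborhood of the identity, which exists for [IN]-groups; by a standard result, an [IN]-group has a compact normal subgroup $K$ with $G_2/K$ [SIN]. If a compact open normal subgroup is not directly available, we will instead use the finite conjugation-invariant functional coming from the invariant compact neighborhood $V$ of $G_2$, namely $y\mapsto\langle y, 1_{G_1}\otimes(1_V*\check 1_V)/|V|\rangle$ suitably. The main obstacle is exactly this second claim, namely getting openness of the radical for [IN]-groups. The general step of checking that $1_R\in B_r(G)$ and that the extension to $M(C^*_r(G))$ stays $G$-invariant is routine by strict density.
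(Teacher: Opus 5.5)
Your proof is correct. For the main claim it is the paper's argument: a bounded functional $\tau$ on $M(C^*_r(G))$ with $\tau(\lambda_s^*y\lambda_s)=\tau(y)$ and $\tau(x)\neq 0$ is constant on $D_{M(C^*_r(G)),G}(x)$. Hence $0\notin D_{M(C^*_r(G)),G}(x)$, and Theorem~\ref{mcb}(ii) applies. The paper simply cites the existence of a tracial state on $C^*_r(G)$. You instead write one down explicitly as $1_R\in B_r(G)$. It is positive definite as an extension by zero from an open subgroup. It is weakly associated to $\lambda$ because $1\in B_r(R)$ and induction preserves weak containment. It is invariant because $R$ is normal. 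This is the standard trace in this situation, and you rightly use only its conjugation invariance, not the full trace property.

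For the product case, the paper invokes the structural fact that every [IN]-group $G_2$ has an open normal amenable subgroup $H$, so that $G_1\times H$ is open, normal and amenable. Your first suggestion does not reach this by itself. The compact normal subgroup $K$ with $G_2/K$ [SIN] need not be open (e.g.\ $G_2=\mathbb{R}$, $K=\{0\}$), so $G_1\times K$ says nothing about openness.

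Your fallback, however, is already a complete proof and makes the ``main obstacle'' you flag moot. Take $\tau=1_{G_1}\otimes\varphi_V$ with $\varphi_V(t)=|V\cap tV|/|V|$. It is a state in $B_r(G_1)\otimes A(G_2)\subseteq B_r(G_1\times G_2)$. It is conjugation-invariant because $V$ is invariant and $G_2$ is unimodular, as every [IN]-group is. Finally, $\tau(\lambda(f))>0$ for nonzero $0\le f\in C_c(G)$ supported near $e$. This route bypasses the structure theory of [IN]-groups. The cost is that it proves the conclusion for such $G$ directly, rather than exhibiting them as groups with open amenable radical.
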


\begin{proof}
Since $G$ has open amenable radical, $C^*_r(G)$ has a tracial state (\cite{KenRuam, fsw}). Let $\tau$ be its (unique) extension to the multiplier algebra $M(C^*_r(G))$. 
Pick $x\in C^*_r(G)$ such that $\tau(x)\neq 0$. Since $\tau$ is a trace, it follows that $\tau$ is a nonzero constant on $D_{M(C^*_r(G)),G}(x)$. This, in particular, implies that $0\notin D_{M(C^*_r(G)),G}(x)$. Thus, by Theorem \ref{mcb}, $M_{\rm cb}A(G,\omega )$ is a subspace of $M_{\rm cb}A(G)$. The final result follows since every [IN]-group has an open normal amenable subgroup.
\end{proof}

\begin{rem}
(i) In Section \ref{sec_amen}, we show that for amenable groups, an even stronger statement holds. Namely,  if $G$ is amenable, then $M_{\rm cb}A(G,\omega)=B_r(G,\omega)$, as in the non-weighted case.

(ii) Let $G$ be a locally compact group. As stated (partially) in Corollary \ref{p:MA(G,w)<MA(G)-trace}, it is shown in \cite{KenRuam} that $C^*_r(G)$ has a trace if and only if $G$ has an open normal amenable subgroup. Although $G$ not having such a subgroup prevents us from applying Corollary \ref{p:MA(G,w)<MA(G)-trace}, one might still hope that the hypothesis of Theorem \ref{mcb}(ii) would stay valid, at least in some cases, as it looks like a weaker assumption than the existence of the trace.  In the following theorem, we show that it is not the case for many connected groups, that is there is a limitation on how far we can apply Theorem \ref{mcb}(ii). 
\end{rem}

\begin{theorem}\label{cb_Lie}
    Let $G$ be a non-compact, connected semisimple  Lie group with finite center. Then   
   $0\in D_{M(C^*_r(G)),G}(x)$ for every $x\in C^*_r(G)$.
    \end{theorem}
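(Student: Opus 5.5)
The plan is to show directly that, for a dense set of $x\in C^*_r(G)$, suitable convex combinations of conjugates $\lambda_s^*x\lambda_s$ tend to $0$ in norm. We then extend to all $x$ by density. The extension works because each averaging map $y\mapsto\sum_k c_k\lambda_{s_k}^*y\lambda_{s_k}$ (with $c_k\ge0$, $\sum c_k=1$) is a contraction: if $\|x-x'\|<\varepsilon$ and some average of $x'$ has norm $<\varepsilon$, the same average of $x$ has norm $<2\varepsilon$. So it suffices to treat $x=\lambda(f)$ with $f\in C_c(G)$.

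\emph{Conjugates of $\lambda(f)$.} Since $G$ is unimodular, $\lambda_s^*\lambda(f)\lambda_s=\lambda(f^s)$, where $f^s(t)=f(sts^{-1})$. Fix a regular element $a$ in the split part $A$ of an Iwasawa decomposition $G=KAN$. Such an $a$ exists and $\mathrm{Ad}(a)$ is non-trivial because $G$ is non-compact. Put
\[
g_n=\frac1n\sum_{k=1}^n f^{a^k}.
\]
Then $\lambda(g_n)\in\operatorname{conv}\{\lambda_s^*\lambda(f)\lambda_s\}$, and $\|g_n\|_1\le\|f\|_1$.

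\emph{Key analytic input: Kunze--Stein.} For connected semisimple Lie groups with finite center, for each $1\le p<2$ there is $C_p$ with $\|\lambda(g)\|\le C_p\|g\|_p$ for $g\in L^p(G)$. By Hölder interpolation, $\|g\|_p\le\|g\|_1^{1-\theta}\|g\|_2^{\theta}$ for some $\theta\in(0,1)$ depending on $p$. Since $\|g_n\|_1$ stays bounded, it therefore suffices to prove $\|g_n\|_2\to0$.

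\emph{Decay of $\|g_n\|_2$.} Expanding,
\[
\|g_n\|_2^2=\frac1{n^2}\sum_{k,l=1}^n\langle f^{a^k},f^{a^l}\rangle=\frac1{n^2}\sum_{k,l}\langle f,f^{a^{l-k}}\rangle,
\]
using invariance of Haar measure under conjugation. Each term is bounded by $\|f\|_2^2$. So a Cesàro argument gives $\|g_n\|_2\to0$ once we know $\langle f,f^{a^m}\rangle\to0$ as $|m|\to\infty$. Dominating $|f|$ by a multiple of the indicator of a compact set $C$, this reduces to
\[
\mu_G\bigl(C\cap a^{-m}Ca^{m}\bigr)\to0\qquad(|m|\to\infty).
\]

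\emph{Main obstacle: the mixing statement.} I expect this step to be the hardest. I would prove it in Bruhat coordinates: the open dense cell $\bar N MAN$ has full Haar measure, and there the Haar measure is a product of Haar measures on the factors (up to a continuous density). Conjugation by $a^m$ fixes the $MA$-coordinate and acts on $N$ and $\bar N$ by $\mathrm{Ad}(a^m)$. Since $a$ is regular, $\mathrm{Ad}(a^m)$ expands $N$ and contracts $\bar N$ for $m>0$, and the reverse for $m<0$. Hence for a compact $C$ inside the cell, any point of $C\cap a^{-m}Ca^m$ must have its $N$-coordinate (for $m>0$) or $\bar N$-coordinate (for $m<0$) inside a neighbourhood of the identity that shrinks as $|m|\to\infty$. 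Integrating over the other coordinates, the measure of that set tends to $0$. The complement of the cell is Haar-null, so the general compact $C$ is handled by covering $C$, up to an arbitrarily small measure, with compact subsets of the cell.

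Combining the steps gives $\|\lambda(g_n)\|\to0$, so $0\in D_{M(C^*_r(G)),G}(\lambda(f))$. The density argument then gives the theorem for every $x\in C^*_r(G)$.
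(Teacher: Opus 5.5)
Your argument is correct, and it takes a genuinely different route from the paper.

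\textbf{What is shared.} Both proofs reduce to $x=\lambda(f)$ with $f\in C_c(G)$. Both rest on Cowling's Kunze--Stein theorem.

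\textbf{The paper's route.} The paper uses Kunze--Stein in dual form and argues by contradiction. A Hahn--Banach functional $\phi\in B_r(G)$ with $|\langle\phi,\lambda_s^*x\lambda_s\rangle|\ge\epsilon$ for all $s$ must lie in $L^p(G)$ for $p>2$. Approximating $\phi$ by $\psi\in C_c(G)$ gives $|(\gamma_G(s)f,\bar\psi)|\ge\epsilon/2$ for the conjugation representation $\gamma_G$ on $L^2(G)$. Hence $\gamma_G$ is not weakly mixing (Bergelson--Rosenblatt), so it has a finite-dimensional subrepresentation and is amenable in Bekka's sense. Then $G$ is inner amenable, hence amenable by Losert--Rindler (as $G$ is connected), hence compact.

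\textbf{Your route.} You use Kunze--Stein in its primal form $\|\lambda(g)\|\le C_p\|g\|_p$ and interpolate between $L^1$ and $L^2$. You replace the whole representation-theoretic chain by a direct proof that $\langle f,f^{a^m}\rangle\to0$. This comes from the contraction and expansion of $\mathrm{Ad}(a)$ on $\bar N$ and $N$ in the Bruhat big cell.

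\textbf{What each approach buys.} Your proof is constructive and gives more than the theorem asks. By your contraction argument, the explicit Ces\`aro averages $\frac1n\sum_{k=1}^n\lambda_{a^k}^*x\lambda_{a^k}$ tend to $0$ in norm for every $x\in C^*_r(G)$. So $0$ is already reached along a single cyclic subgroup, and the matrix coefficients of $\gamma_G$ restricted to $\langle a\rangle$ vanish at infinity. This is a sharper dynamical fact than the mere absence of finite-dimensional subrepresentations. It also avoids Bergelson--Rosenblatt, Bekka and Losert--Rindler. The paper's route avoids any explicit use of Iwasawa or Bruhat coordinates. It also ties the phenomenon to inner amenability, which fits the surrounding discussion of traces and open amenable radicals.

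\textbf{Points to tighten.} Regularity of $a$ alone does not make $\mathrm{Ad}(a)$ expand all of $\mathfrak n$. You should take $a\in\exp(\mathfrak a^+)$ in the positive Weyl chamber for the chosen $N$, or choose $N$ adapted to $a$. The case $m<0$ needs no separate treatment. Since conjugation by $a^m$ preserves Haar measure, $\mu(C\cap a^{-m}Ca^m)$ is invariant under $m\mapsto -m$.
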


\begin{proof}
Suppose otherwise, i.e. there is $x\in C^*_r(G)$ such that $0\notin D_{M(C^*_r(G)),G}(x)\subset C^*_r(G)$. Since, $C_c(G)$, the space of continuous functions on $G$ with compact support, is dense in $C^*_r(G)$ and for every $s\in G$, $g,h\in C^*_r(G)$, 
$$\|\lambda_s g \lambda_s^*- \lambda_s h \lambda_s^*\|_{C^*_r(G)}=\|\lambda_s (g-h) \lambda_s^* \|_{C^*_r(G)}= \|g-h\|_{C^*_r(G)},$$
we may assume that $x=\lambda(f)$, $f\in C_c(G)$. Now, by our assumption, there are $\epsilon>0$ and $\phi\in B_r(G)=C^*_r(G)^*$ such that for every $s\in G$, 
$$\left|\int_G \phi(t)f(s^{-1}ts) dt \right|\geq \epsilon.  $$
On the other hand, by \cite[Theorem, p.211]{cowling},  $G$ has Kunze-Stien property  so that there is $p>2$ such that $\phi \in L^p(G)$ (in fact, this holds for every $p>2$).  Let $\psi\in C_c(G)$ with 
$$\|\phi-\psi\|_{L^p(G)}< \frac{\epsilon}{2\|f\|_{L^q(G)}},$$
where $q$ is the conjugate of $p$ satisfying $\frac{1}{p}+\frac{1}{q}=1$.
Then, by combining the above relations, letting 
$$(\gamma_G(s)g)(t)=g(s^{-1}ts)  \ \ \ (s,t\in G, g\in L^p(G)),$$
and noting that $G$ is unimodular and hence $\gamma_G(s)$ is an isometry, we have 
\begin{align*}
    \epsilon &\leq \left|\int_G \phi(t)f(s^{-1}ts) dt \right| \\
     &\leq \left|\int_G (\phi(t)-\psi(t)) f(s^{-1}ts) dt \right|+\left|\int_G \psi(t)f(s^{-1}ts) dt \right| \\
     &\leq \|\phi-\psi\|_{L^p(G)} \|\gamma_G(s)f\|_{L^q(G)}+\left|\int_G \psi(t)f(s^{-1}ts) dt \right| \\
     &= \|\phi-\psi\|_{L^p(G)} \|f\|_{L^q(G)}+\left|\int_G \psi(t)f(s^{-1}ts) dt \right| \\
     &\leq \frac{\epsilon}{2}+|( \gamma_G(s)f, \bar\psi )|.
\end{align*}
Hence we have 
$$|( \gamma_G(s)f, \bar\psi )| \geq \frac{\epsilon}{2} \ \ \ (s\in G).$$
Since $f,\psi \in C_c(G)\subset  L^2(G)$, we must then have that $\gamma_G$ is not weakly mixing \cite[Corollary 1.6]{ber_ros}.  Hence it must have a finite-dimensional subrepresentation \cite[Theorem 1.9]{ber_ros}. 
In particular, it is an amenable representation in the sense of Bekka, \cite[Theorem 1.3]{bekka}. But that is not possible because, by \cite[Theorem 2.4]{bekka}, the conjugate representation is amenable if and only if $G$
is an inner-amenable group. However, $G$ is connected so that it is inner-amenable if and only if it is amenable (see \cite[Theorem 1]{losert_rindler}), and hence compact by \cite[Theorem 8.7.6]{re}, which is not the case here. 
\end{proof}

Next theorem gives another natural class of weight inverses  for which the inclusion $M_{\rm cb}A(G,\omega)\subset M_{\rm cb}A(G)$ holds. 
\begin{theorem}
    Let $G$ be a locally compact group, and let $H$ be a normal amenable subgroup. Suppose that $\omega_H$ is a weight inverse on $H$ and 
    $\omega=\gamma(\omega_H)$ is the corresponding weight inverse on $G$. Then $M_{\rm cb}A(G,\omega)$ is a subspace of $M_{\rm cb}A(G)$.
\end{theorem}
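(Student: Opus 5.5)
We will verify the hypothesis singled out in Remark~\ref{rem_weight}, namely that $0\notin D_{M(C_r^*(G)),G}(\omega)$ (or, failing that, $0\notin D_{M(C_r^*(G)),G}(\omega x)$ for a suitable $x$), and then rerun part (ii) of Theorem~\ref{mcb}. As in that proof, we may replace $\omega_H$ by $|\omega_H^*|$ using \cite[Proposition~2.2]{ors}. Then $\omega_H$ is positive, and so is $\omega=\gamma(\omega_H)$, since $\gamma$ is a $*$-homomorphism.

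The key structural point is normality of $H$. For $s\in G$, conjugation $\alpha_s(h)=s^{-1}hs$ is a topological automorphism of $H$, and it induces an automorphism $\tilde\alpha_s$ of $VN(H)$ and of $C_r^*(H)$ with $\tilde\alpha_s(\lambda^H_h)=\lambda^H_{s^{-1}hs}$ (up to the modular factor relating the Haar measures, which only rescales the $L^1$-level implementation). By \eqref{eq:gamma},
$$\lambda_s^*\gamma(y)\lambda_s=\gamma(\tilde\alpha_s(y)) \qquad (y\in VN(H)).$$
Hence the closed convex hull of $\{\lambda_s^*\omega\lambda_s : s\in G\}$ sits inside $\gamma\bigl(D_{M(C_r^*(H)),G}(\omega_H)\bigr)$, where $G$ now acts on $M(C_r^*(H))$ through $\tilde\alpha$. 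Because $\gamma$ is isometric, it suffices to find a bounded functional on $M(C_r^*(H))$ that is $G$-invariant and does not vanish on $\omega_H$.

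Here amenability of $H$ enters. Since $H$ is amenable, $C_r^*(H)=C^*(H)$ admits the trivial character $\varepsilon$, with $\varepsilon(\lambda^H(f))=\int_H f$. Its extension to $M(C^*_r(H))$ is again a character, given by $1_H\in B(H)=B_r(H)$. This functional is invariant under every automorphism induced by a group automorphism, because $1_H\circ\alpha_s=1_H$. So $\varepsilon$ is constant along $G$-orbits, and by convexity and continuity it is constant on the closed convex hulls as well.

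The step we expect to be the main obstacle is showing that $\varepsilon(\omega_H)\neq 0$. For this we use $\omega_H\otimes\omega_H=\Gamma_H(\omega_H)\Omega$ together with positivity. Since $\varepsilon$ is a character, $\varepsilon\otimes\varepsilon$ satisfies $(\varepsilon\otimes\varepsilon)\circ\Gamma_H=\varepsilon$. This gives $\varepsilon(\omega_H)^2=\varepsilon(\omega_H)\,\tilde\varepsilon(\Omega)$, where $\tilde\varepsilon$ is the normal or strict extension, and that identity alone does not rule out $\varepsilon(\omega_H)=0$. So we would instead argue directly: if $\varepsilon(\omega_H)=0$, then, $\varepsilon$ being a positive character, $\varepsilon(x\omega_H)=\varepsilon(x)\varepsilon(\omega_H)=0$ for all $x\in C_r^*(H)$. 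But $C_r^*(H)\omega_H$ is dense in $C_r^*(H)$ by Definition~\ref{D:Weight}(1), which would force $\varepsilon=0$, a contradiction. Thus $\varepsilon(\omega_H)\neq0$, so $\varepsilon\circ\gamma^{-1}$, extended by Hahn--Banach to a functional $S$ on $B(L^2(G))$ (extend from $\gamma(M(C_r^*(H)))$), satisfies $\langle S,\lambda_s^*\omega\lambda_s\rangle=\varepsilon(\omega_H)$ for all $s\in G$.

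With this $S$ in hand, we repeat the argument of Theorem~\ref{mcb}(ii) with $x$ replaced by $1$, which is allowed by Proposition~\ref{P:weighted cb mult-Schur mult}, as it holds for $x\in M(C_r^*(G))$. The function $\Phi_S(t,s)=\phi(ts^{-1})\langle S,\lambda_s^*\omega\lambda_s\rangle$ is a Schur multiplier. Dividing by the nonzero constant $\varepsilon(\omega_H)$ shows that $(t,s)\mapsto\phi(ts^{-1})$ is a Schur multiplier, so $\phi\in M_{\rm cb}A(G)$ by \cite{boz,jol}.
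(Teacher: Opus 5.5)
Your proposal is correct and follows essentially the paper's route. Both arguments transport the conjugation-invariant trivial character of the amenable normal subgroup $H$ through $\gamma$, extend it to $B(L^2(G))$, and feed it into the Schur-multiplier argument of Theorem~\ref{mcb}(ii). The only difference is cosmetic: you take $x=1$ and use multiplicativity of the extended character to get $\varepsilon(\omega_H)\neq 0$, whereas the paper takes $x=\gamma(f)$ and uses the density condition to pick $f\in C^*_r(H)$ with $\langle \omega_H f,1\rangle\neq 0$; by multiplicativity these two conditions are equivalent.
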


\begin{proof}
Let $\phi\in M_{\rm cb}A(G,\omega)$. Since $H$ is normal, $G$ acts on $H$ by conjugation:
\begin{align*}
s\cdot h:=shs^{-1} \ \ \ (s\in G,h\in H).     
\end{align*}
This induces an action on $VN(H)$ having both $C^*_r(H)$ and $M(C^*_r(H))$ as $G$-invariant C$^*$-subalgebras. In particular,
$$\lambda_s^*\gamma(C_r^*(H))\lambda_s\subset\gamma(C_r^*(H)).$$
Let $x\in C_r^*(H)$ and write $x_s$ for the preimage $\gamma^{-1}(\lambda_s\gamma(x))\lambda_s^*)\in C_r^*(H)$. 
As $H$ is amenable, the functional $1\in L^\infty(H)$, $1:\lambda(g)\mapsto\int_Hg(h)dh$, $g\in L^1(H)$, extends uniquely to a multiplicative linear functional on $M(C_r^*(H))$ (see e.g. the proof of \cite[Lemma 4.2]{ors} for details).  Let $\chi:\gamma (C_r^*(H))\to \mathbb C$ be given by 
$$\chi(\gamma(a))=\langle a,1\rangle, a\in C_r^*(H).$$ Observe that for every $g\in L^1(H)$, 
$$\gamma(\lambda^H(g)_s)=\lambda_s^*\gamma(\lambda^H(g))\lambda_s=\int_Hg(t)\lambda_{s^{-1}\cdot t}^Gdt.$$
Therefore
$$\chi (\gamma(\lambda^H(g)_s))=\int_Hg(t)\langle\lambda_{s^{-1}\cdot t}^G , 1 \rangle dt=\int_H g(t)dt=\langle \lambda^H(g),1\rangle$$ and hence 
$$\chi (\gamma(a_s))=\langle a,1\rangle, \ \ a\in C_r^*(H).$$ 
Let $\phi\in M_{\rm cb}A(G,\omega)$. By extending $\chi$ to a positive linear functional on $B(L^2(G))$,
it follows from the proof of Theorem \ref{mcb} that $(t,s)\mapsto \phi(ts^{-1})\chi(\gamma((\omega_Hf)_s))$ is a Schur multiplier.
As $\chi(\gamma((\omega_Hf)_s))=\langle\omega_Hf,1\rangle\ne 0$ for some $f\in C_r^*(H)$ by the density condition on $\omega_H$, we obtain that
the mapping $(s,t)\mapsto \phi(ts^{-1})$ is a Schur multiplier. This completes the proof.

\end{proof}





It is known, for a locally compact group $G$, $M_{\rm cb}(A(G))$  
is a dual Banach algebra with the predual $Q(G)$ obtained by completion of $L^1(G)$ in the norm
$$\|f\|_{Q(G)}=\sup\left\{\left|\int_G f(s)u(s)ds\right|: u\in M_{\rm cb}A(G), \|u\|_{\rm cb}\leq 1\right\},$$
see \cite[Proposition 1.10]{canniere-haagerup}.
In the following, we show that this is also the case for  $M_{\rm cb}A(G, \omega )$.
To prove this, we need the following lemma.

\begin{lemma}\label{l:A(G,w)-dense-product}
Let $G$ be a locally compact group and $\omega $ be a weight inverse on it. Then the  linear space spanned by $\{(\omega u)(\omega v): u,v \in A(G)\}$ is dense in $A(G,\omega )$.
\end{lemma}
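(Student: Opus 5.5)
We argue by duality: since $A(G,\omega)^*=VN(G)$ via $\langle \omega u, x\rangle_\omega=\langle u,x\rangle$, it suffices to show that an element $x\in VN(G)$ annihilating all products $(\omega u)(\omega v)$, $u,v\in A(G)$, must vanish. So fix $x\in VN(G)$ with $\langle (\omega u)(\omega v),x\rangle_\omega=0$ for all $u,v\in A(G)$.

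The first step is to rewrite the pairing using the product formula $(\omega u)(\omega v)=\omega\Gamma_*(\Omega(u\otimes v))$. Exactly as in the computation in the proof of Theorem~\ref{t:main}, but now with normal functionals on $VN(G)$, we get
\[
0=\langle (\omega u)(\omega v),x\rangle_\omega=\langle \Omega(u\otimes v),\Gamma(x)\rangle=\langle u\otimes v,\Gamma(x)\Omega\rangle .
\]
Since $u\otimes v$ ranges over a total subset of $A(G\times G)=VN(G\times G)_*$, this forces $\Gamma(x)\Omega=0$ in $VN(G\times G)$.

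The second step, which is the main point, is to deduce $\Gamma(x)=0$. By Lemma~\ref{l:ker(w-1)}, $\ker\Omega=\{0\}$. This alone is not enough; we need $\Omega$ to have dense range. For that, we use $\omega\otimes\omega=\Gamma(\omega)\Omega$ and take adjoints: $\Omega^*\Gamma(\omega)^*=\omega^*\otimes\omega^*$. The operators $\omega$ and $\omega^*$ have trivial kernels, and hence so do $\omega\otimes\omega$ and $\omega^*\otimes\omega^*$, because a tensor product of injective operators with dense range is injective.

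A cleaner route avoids $\Omega^*$ altogether. Multiplying $\Gamma(x)\Omega=0$ on the left by $\Gamma(\omega)$ gives
\[
0=\Gamma(\omega)\Gamma(x)\Omega=\Gamma(\omega x\omega^{-1}\cdots),
\]
which is awkward because $\omega$ need not be invertible. Instead, we simply take adjoints of the defining relation: $\ker\Omega=0$ is equivalent to $\Omega^*$ having dense range, so we need $\ker\Omega^*=0$. From $\Omega^*\Gamma(\omega)^*=\omega^*\otimes\omega^*$ we get $\operatorname{Ran}(\omega^*\otimes\omega^*)\subseteq\operatorname{Ran}(\Omega^*)$, which only gives density of $\operatorname{Ran}\Omega^*$; this is not what is needed.

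We therefore argue directly. From $\Gamma(x)\Omega=0$, right-multiplying the relation $\Gamma(x)\Gamma(\omega)\Omega=\Gamma(x\omega)\Omega$ is not available either. So we instead use $\Gamma(x\omega)\Omega$: notice
\[
\Gamma(x)(\omega\otimes\omega)=\Gamma(x)\Gamma(\omega)\Omega,
\]
so the vanishing we should aim for is that of $\Gamma(x)\Gamma(\omega)\Omega$, not of $\Gamma(x)\Omega$. To get it, redo the first step with the functional $\omega v\in A(G,\omega)$ paired against $x$, i.e. compute
\[
\langle(\omega u)(\omega v),x\rangle=\langle u\otimes v,\Gamma(x)(\omega\otimes\omega)\rangle .
\]
This is the computation in the faithfulness part of the proof of Theorem~\ref{t:main}, with the ordinary pairing $\langle\cdot,x\rangle$ as the dual of $A(G)$, equivalently the pairing $\langle\cdot,x\omega\rangle_\omega$. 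Since $x\mapsto x\omega$ is injective, with dense range in the weak$^*$ sense by Lemma~\ref{l:ker(w-1)}, the annihilator condition for $\langle\cdot,\cdot\rangle_\omega$ can be transferred. The conclusion is then
\[
\Gamma(x)(\omega\otimes\omega)=0 .
\]
Because $\omega\otimes\omega$ has dense range (Lemma~\ref{l:ker(w-1)} applied to $\omega^*\otimes\omega^*$), we obtain $\Gamma(x)=0$. Since $\Gamma$ is injective, $x=0$.

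The main obstacle is the second step: matching the two pairings correctly, that is, deciding whether the annihilation yields $\Gamma(x)\Omega=0$ or $\Gamma(x)(\omega\otimes\omega)=0$. The first requires dense range of $\Omega$, which is unproven. The second uses only the known dense range of $\omega\otimes\omega$. We would settle this by choosing the test element $x\omega$ as in \eqref{eq:point-evaluation}.
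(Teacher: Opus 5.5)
Your first step is exactly the paper's proof: from $\langle(\omega u)(\omega v),x\rangle_\omega=0$ for all $u,v$ one gets $\Gamma(x)\Omega=0$. The paper then concludes $\Gamma(x)=0$ by asserting that $\Omega$ has dense range because $\ker\Omega=\{0\}$ (Lemma~\ref{l:ker(w-1)}). Your suspicion of that inference is justified. Injectivity of $\Omega$ gives dense range of $\Omega^*$, not of $\Omega$, so one needs $\ker\Omega^*=\{0\}$. This is automatic, for example, when $\Omega$ is normal, as for abelian $G$. However, your proposed way around it does not work.

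The gap is the ``transfer'' step. The annihilation hypothesis is stated in the pairing $\langle\cdot,\cdot\rangle_\omega$, which is the duality $A(G,\omega)^*=VN(G)$. The identity you then use, $\langle(\omega u)(\omega v),x\rangle=\langle u\otimes v,\Gamma(x)(\omega\otimes\omega)\rangle$, is in the plain pairing. That is, it computes $\langle(\omega u)(\omega v),x\omega\rangle_\omega$, and nothing in the hypothesis makes this vanish. What your computation actually establishes is that the annihilator $N\subseteq VN(G)$ of the products satisfies $N\cap VN(G)\omega=\{0\}$. Injectivity and weak$^*$ density of $x\mapsto x\omega$ cannot upgrade this to $N=\{0\}$, because a nonzero weak$^*$-closed subspace can meet a weak$^*$-dense subspace trivially.

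A transfer would require one of two things. One option is to know a priori that $N\subseteq VN(G)\omega$, which you cannot arrange in general, since $VN(G)\omega\neq VN(G)$ unless $\omega$ is invertible. The other is that $N\omega\subseteq N$, i.e.\ that $\Gamma(x)\Omega=0$ implies $\Gamma(x\omega)\Omega=\Gamma(x)(\omega\otimes\omega)=0$. That is exactly the difficulty you were trying to avoid.

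In predual terms, the functionals $\langle\cdot,x\rangle$ are just the restrictions of $A(G)^*$ to $A(G,\omega)$. So your argument only shows that the span of the products is dense in $A(G)$ for $\|\cdot\|_{A(G)}$. Since $\|\omega u\|_{A(G)}\le\|\omega u\|_{A(G,\omega)}$, and the two norms are equivalent only when $\omega$ is invertible (Corollary~\ref{c:omega-invertible}), this does not give density in $A(G,\omega)$. The missing ingredient is an actual proof that $\Gamma(x)\Omega=0$ forces $x=0$, for instance by establishing $\ker\Omega^*=\{0\}$.
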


\begin{proof}
Let $\cA$ be  the closure of the linear subspace constructed by $\{(\omega u)(\omega v): u,v \in A(G)\}$ in $A(G,\omega )$.
If the inclusion $\cA \subseteq A(G,\omega )$ is  proper, there is non-zero  $x \in VN(G)$   such that  the restriction of $x$ to $\cA$ is $0$. Therefore, as shown in the proof of Theorem \ref{t:main},
\begin{eqnarray*}
0 = \langle (\omega u)(\omega v) , x\rangle_\omega   &=& \langle \omega  \Gamma_*(\Omega (u \otimes v)) , x\rangle_\omega  \\
&=& \langle  \Omega (u \otimes v) ,\Gamma( x)  \rangle\\
&=& \langle  u \otimes v , \Gamma( x)  \Omega \rangle.
\end{eqnarray*} 
Since $A(G) \odot A(G)$ is dense in $A(G \times G)$, we get that $A(G \times G)$ is a subset of the kernel of $\Gamma( x)  \Omega $.
Let $\xi, \eta  \in L^2(G \times G)$ be chosen  arbitrarily.   Therefore, for $u\in A(G\times G)$ given by
$$u(t,s)=((\lambda_t\otimes \lambda_s)\xi, \eta) \ \ \  \ (t,s\in G),$$
we get
\[
0 = \langle u, \Gamma( x)  \Omega  \rangle = (\Gamma( x)  \Omega   \xi,  \eta).
\]
But since  by Lemma~\ref{l:ker(w-1)}, $\ker(\Omega )=\{0\}$, $\Omega $ has a dense range.  Therefore, the last equality implies that $\Gamma(x) =0$ which is a contradiction. 
\end{proof}

Let $A$ be a completely contractive Banach algebra, $X$ is right $A$-module  and $Y$ is a left $A$-module. The $A$-module tensor product of $X$ and $Y$ is the quotient space $X\hat{\otimes}_AY := X\hat{\otimes} Y/N$, where
\[
N =\langle x\cdot a\otimes y - x\otimes a\cdot  y\; |\; x\in X, y\in Y, a\in A\rangle,
\]
and  $\langle \cdot \rangle$  denotes the closed linear span. It follows that
\[
CB_A(X,Y^*) \cong N^\perp  \cong (X \hat{\otimes}_AY)^*,
\]
where $CB_A(X, Y^*)$ denotes the space of completely bounded right $A$-module maps $\Phi : X \rightarrow Y^*$ (see \cite[Corollary 3.5.10]{blecher-lemerdy}).

In \cite[Remark~5.9]{jay}, J. Crann showed that, for a locally compact group $G$, $M_{\rm cb}A(G)$ is completely isometric to the dual space of $A(G)\hat{\otimes}_{A(G)} C^*_r(G)$.
In the theorem below, we  extend this result to Beurling-Fourier algebras and show that $M_{\rm cb}A(G,\omega)$ is a dual operator space. 
It is known that $M_{\rm cb}A(G)$ is a dual Banach algebra. We believe that this should hold for  $M_{\rm cb}A(G,\omega)$ as well.
By Theorem \ref{t:G-amenable-MA(G,w)} and Lemma \ref{l:B(G,omega)-dual-Banach-algebra}, this is true for amenable groups.

\begin{theorem}\label{t:McbA(G,w)-dual-Balgebra}
Let $G$ be a locally compact with a  weight inverse $\omega $. Then $M_{\rm cb}A(G,\omega )$ is  completely isometric to the dual space of  $A(G,\omega )\hat{\otimes}_{A(G,\omega )} C^*_r(G)$.
\end{theorem}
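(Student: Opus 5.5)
The plan is to follow Crann's argument for the unweighted case. We use the identification $(X\hat\otimes_A Y)^*\cong CB_A(X,Y^*)$ with $X=A(G,\omega)$ and $A=A(G,\omega)$. We take $Y=C^*_r(G)$, made into a left $A(G,\omega)$-module via the action $\omega u\cdot_\omega x$ from Section 5; that this action lands in $C^*_r(G)$ is exactly what the proof of Lemma \ref{l:B(G,omega)-dual-Banach-algebra} shows. This module is completely contractive, because the product on $B_r(G,\omega)\supset A(G,\omega)$ is completely contractive (Theorem \ref{t:main}). Then $Y^*=B_r(G,\omega)$ with the duality \eqref{eq:duality-B(G,omega)}. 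The dual right module action on $Y^*$ is multiplication in $B_r(G,\omega)$.

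This gives $(A(G,\omega)\hat\otimes_{A(G,\omega)}C^*_r(G))^*\cong CB_{A(G,\omega)}(A(G,\omega),B_r(G,\omega))$ completely isometrically. It remains to identify the latter with $M_{\rm cb}A(G,\omega)$ completely isometrically.

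For $\phi\in M_{\rm cb}A(G,\omega)$ we define $\Phi_\phi(\omega u)=\bm_\phi(\omega u)\in A(G,\omega)\subset B_r(G,\omega)$. The inclusion $A(G,\omega)\hookrightarrow B_r(G,\omega)$ is a complete contraction, being the adjoint of $C^*_r(G)\hookrightarrow VN(G)$. Hence $\Phi_\phi$ is a completely bounded module map with $\|\Phi_\phi\|_{cb}\le\|\phi\|_{cb}$. For the reverse inequality, the point is that the inclusion of $A(G,\omega)$ into $B_r(G,\omega)$ is a complete isometry. Indeed, $\omega u\mapsto u$ intertwines it with $A(G)\subset B_r(G)$, which is completely isometric by Kaplansky density: $C^*_r(G)$ is weak$^*$ dense in $VN(G)$, including at matrix levels.

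Conversely, let $\Phi\in CB_{A(G,\omega)}(A(G,\omega),B_r(G,\omega))$. The module property gives $\Phi(ab)=\Phi(a)b$ for all $a,b\in A(G,\omega)$. By commutativity we get $a\Phi(b)=\Phi(ab)=\Phi(a)b$.

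The key step is to produce a function $\phi$ on $G$ with $\Phi(a)=\phi a$. This is the expected main obstacle. For $s\in G$ we pick $a\in A(G,\omega)$ with $a(s)\neq0$, which is possible since $A(G,\omega)$ separates points by Theorem \ref{l:G->D(A(G,w))}. We set $\phi(s)=\Phi(a)(s)/a(s)$. This is well defined because $a(s)\Phi(b)(s)=\Phi(a)(s)b(s)$, and $\phi$ is continuous. Then $\Phi(b)=\phi b$ pointwise.

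Next we show $\phi b\in A(G,\omega)$, not merely in $B_r(G,\omega)$. Since $A(G,\omega)$ is an ideal in $B_r(G,\omega)$ (Theorem \ref{t:main}), $\Phi(ab)=\Phi(a)b\in A(G,\omega)$. By Lemma \ref{l:A(G,w)-dense-product}, products are dense, so $\Phi$ maps a dense subspace into $A(G,\omega)$. Because the inclusion $A(G,\omega)\subset B_r(G,\omega)$ is an isometry, $A(G,\omega)$ is closed in $B_r(G,\omega)$. By continuity, $\Phi$ maps all of $A(G,\omega)$ into $A(G,\omega)$. Thus $\phi\in M_{\rm cb}A(G,\omega)$ with $\|\phi\|_{cb}=\|\Phi\|_{cb}$, again by the complete isometry of the inclusion.

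Finally, the two maps $\phi\mapsto\Phi_\phi$ and $\Phi\mapsto\phi$ are mutually inverse. Applying the same reasoning at each matrix level $M_n(CB_A(\cdot,\cdot))\cong CB_A(\cdot,M_n(\cdot))$ gives complete isometry.

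The delicate points are two. The first is the complete isometry of $A(G,\omega)\subset B_r(G,\omega)$; we reduce it to the unweighted case via $\omega u\mapsto u$, which is compatible with both dualities $\langle\cdot,\cdot\rangle_\omega$. The second is checking that $Y=C^*_r(G)$ is a genuine operator module, so that \cite[Corollary 3.5.10]{blecher-lemerdy} applies.
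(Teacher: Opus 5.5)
Your proposal is correct and follows essentially the same route as the paper: you identify the dual of the module tensor product with the right module maps in $CB(A(G,\omega),B_r(G,\omega))$, then use density of products (Lemma \ref{l:A(G,w)-dense-product}) and the ideal property (Theorem \ref{t:main}) to show these are exactly the completely bounded multipliers of $A(G,\omega)$. You also make explicit several points the paper leaves implicit: that $A(G,\omega)\subset B_r(G,\omega)$ is a complete isometry (which is needed both for the cb norms to match and for $A(G,\omega)$ to be closed in $B_r(G,\omega)$), and the pointwise construction of the symbol $\phi$ from a module map.
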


\begin{proof}
Consider $A(G,\omega ) \hat{\otimes} C^*_r(G)$. Then it is know that $(A(G,\omega ) \hat{\otimes} C^*_r(G))^*$ is isometrically isomorphic to $CB(A(G,\omega ), B_r(G, \omega ))$ by the equation \cite[7.1.15]{er-book}.  Now let $X$ be the closed linear subspace of $A(G,\omega ) \hat{\otimes} C^*_r(G)$ given  by
\[
X:=\left\langle (\omega  u) (\omega v)\otimes x) -  (\omega u \otimes (\omega v) \cdot_\omega  x): u,v \in A(G,\omega ), x \in C^*_r(G)\right\rangle.
\]
Then $X^\perp \subseteq CB(A(G,\omega ), B_r(G,\omega ))$  is a weak$^*$-closed subspace. Also, for each $T$  in $CB(A(G,\omega ), B_r(G,\omega ))$ we have that  $T\in X^\perp$  if and only if $T((\omega  u) (\omega v))=T(\omega  u) (\omega v)$.  But, by Lemma~\ref{l:A(G,w)-dense-product}, we know that products are dense in $A(G,\omega )$ and, by Theorem~\ref{t:main}, $A(G,\omega )$ is an ideal in $B_r(G,\omega )$. Therefore, $T(A(G,\omega )) \subseteq A(G,\omega )$. Consequently, there is some $\phi \in M_{\rm cb}A(G,\omega )$ so that $\bm_\phi=T$. 
We have thus shown that $M_{\rm cb}A(G,\omega )$ is completely isometric to $X^\perp$, and hence $M_{\rm cb}A(G,\omega )$ is a dual Banach space where the weak$^*$ topology on $M_{\rm cb}A(G,\omega )$ is derived through embedding $M_{\rm cb}A(G,\omega ) \rightarrow  (A(G,\omega )\hat{\otimes} C^*_r(G))^*$ given by
\[
\langle \bm_\phi, (\omega u)\otimes x) \rangle = \langle \bm_\phi(\omega  u), x\rangle_\omega .
\]


\end{proof}

\section{Completely bounded multipliers of $A(G,\omega )$ for   amenable groups}\label{sec_amen}

In the final section, we consider Beurling-Fourier algebras on locally compact amenable groups. We start with the following, which characterize the amenability of the group in terms of (bounded approximate) identities.

\begin{proposition}\label{p:bai-identity}
Let $G$ be a locally compact group with a weight inverse $\omega $. Then the following conditions are equivalent.
\begin{itemize}
\item[(i)]{$G$ is amenable.}
\item[(ii)]{$A(G, \omega )$ has a bounded approximate identity.}
\item[(iii)]{$B_r(G, \omega )$ includes the constant function $1$.}
\end{itemize}
\end{proposition}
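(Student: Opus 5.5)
The plan is to prove (i)$\Leftrightarrow$(ii) by invoking the weighted Leptin theorem of \cite{ors}, and to handle (iii) by relating it to amenability through the structure of $B_r(G,\omega)$ and $\omega$. Since the introduction states that \cite{ors} characterized the existence of a bounded approximate identity for $A(G,\omega)$ by amenability of $G$, the equivalence (i)$\Leftrightarrow$(ii) will be quoted from there. Because our Definition \ref{D:Weight} is exactly the one of \cite{ors}, no translation of hypotheses is needed.

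For (iii)$\Rightarrow$(i), suppose $1=\omega u$ for some $u\in B_r(G)$. Then $1\in B_r(G,\omega)\subseteq B_r(G)$. By Hulanicki's theorem (equivalently, since $1\in B_r(G)$ means the trivial representation is weakly contained in $\lambda$), $G$ is amenable. This direction is immediate.

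For (i)$\Rightarrow$(iii), I would construct $u$ with $\omega u=1$, i.e.\ $\langle u, x\omega\rangle=\langle 1,x\rangle$ for all $x\in C^*_r(G)$. Here $1$ is viewed as the trivial character $\epsilon$ on $C^*_r(G)$, which exists by amenability and extends to a character of $M(C^*_r(G))$.

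\begin{itemize}
\item[(a)] First show $\epsilon(\omega)\neq 0$. If $\epsilon(\omega)=0$, then $\epsilon(x\omega)=\epsilon(x)\epsilon(\omega)=0$ for all $x$. Density of $C^*_r(G)\omega$ would then force $\epsilon=0$, a contradiction.
\item[(b)] Set $u=\epsilon(\omega)^{-1}\epsilon\in B_r(G)$. Then for every $x\in C^*_r(G)$,
\[
\langle \omega u,x\rangle=\langle u,x\omega\rangle=\epsilon(\omega)^{-1}\epsilon(x)\epsilon(\omega)=\epsilon(x)=\langle 1,x\rangle,
\]
so $1=\omega u\in B_r(G,\omega)$.
\end{itemize}

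The one technical point is justifying that $\epsilon$ extends multiplicatively to $M(C^*_r(G))$. This is the standard fact that a nondegenerate representation extends uniquely to the multiplier algebra, and the paper already uses it in the proof of the final theorem of Section \ref{s:McbA(G,w)} (via \cite[Lemma 4.2]{ors}). I would also remark that this gives $\|1\|_{B_r(G,\omega)}=|\epsilon(\omega)|^{-1}$, although that value is not needed for the proposition.

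I expect no real obstacle in (iii)$\Leftrightarrow$(i). The substantive content, the bounded approximate identity, rests on \cite{ors}. If a self-contained proof of (i)$\Rightarrow$(ii) were wanted, I would take a bounded approximate identity $(e_\alpha)$ of $A(G)$ and show that $\epsilon(\omega)^{-1}\,\omega e_\alpha$ works. By Theorem \ref{t:main}, products are $\omega\Gamma_*(\Omega(\cdot\otimes\cdot))$, and I would control the resulting term $\Omega(e_\alpha\otimes v)$; this is the step I would expect to be hardest.
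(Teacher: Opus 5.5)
Your proof is correct, but for reaching (iii) you take a different route from the paper.

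\textbf{The paper's route.} It proves (ii)$\Rightarrow$(iii). It takes a bounded approximate identity $(\omega v_\alpha)$ of $A(G,\omega)$ and extracts a weak$^*$ cluster point $\omega v_0$ in the dual space $B_r(G,\omega)$. It then uses the dual Banach algebra structure (Lemma \ref{l:B(G,omega)-dual-Banach-algebra}) to pass the limit through the product. This gives $(\omega u)(\omega v_0)=\omega u$ for all $u\in A(G)$, and hence $\omega v_0=1$.

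\textbf{Your route.} You prove (i)$\Rightarrow$(iii) directly. You extend the trivial character $\epsilon$ to $M(C^*_r(G))$, use condition (1) of Definition \ref{D:Weight} to get $\epsilon(\omega)\neq 0$, and observe that $\omega\bigl(\epsilon(\omega)^{-1}\epsilon\bigr)=1$. All of these steps check out.

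\textbf{What each buys.} Your argument is more elementary and does not need Lemma \ref{l:B(G,omega)-dual-Banach-algebra}. It also gives an explicit preimage of $1$ and the value $\|1\|_{B_r(G,\omega)}=|\epsilon(\omega)|^{-1}$. Together with your (iii)$\Rightarrow$(i), it establishes (i)$\Leftrightarrow$(iii) without appealing to the Leptin-type theorem of \cite{ors}. The paper, by contrast, reaches (iii) from amenability only by first passing through \cite{ors} to get (ii). The paper's argument, for its part, is purely Banach-algebraic: a bounded approximate identity of the ideal $A(G,\omega)$ has the unit of the dual algebra as a weak$^*$ cluster point. It does not require extending a character to the multiplier algebra.

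Your closing sketch of a self-contained proof of (i)$\Rightarrow$(ii) is not needed, since you, like the paper, quote \cite{ors} for that equivalence.
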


\begin{proof}
(i)$\Leftrightarrow$(ii) is the main result of \cite{ors}.

(ii)$\Rightarrow$(iii). Let $(v_\alpha)$ be in $A(G)$ so that $(\omega  v_\alpha)_\alpha$ is a bounded approximate identity of $A(G, \omega )$.
Since the unit ball of $B_r(G, \omega )$ is weakly$^*$ compact (as a dual Banach space) and contains the net $(\omega v_\alpha)$, there is a weak$^*$ cluster point $\omega v_0 $ for the net in $B_r(G, \omega )$.  We claim that $\omega v_0$ is the constant function $1$.  To do so, first note that for each $u \in A(G)$ and $x \in C^*_r(G)$, we have
\begin{eqnarray}\label{eq:weak*-limit}
\langle (\omega u)(\omega v_0), x\rangle_\omega  &=& \langle (\omega v_0), x \cdot_\omega  (\omega u)\rangle_\omega \nonumber\\
 &=& \lim_\alpha \langle (\omega v_\alpha), x \cdot_\omega  (\omega u)\rangle_\omega \\
  &=& \lim_\alpha \langle (\omega u)(\omega v_\alpha), x \rangle_\omega \nonumber\\
  &=&  \langle \omega u, x \rangle_\omega \nonumber
\end{eqnarray}
where (\ref{eq:weak*-limit}) is justified due to the fact that $B_r(G,\omega )$ is a dual Banach algebra, by Lemma~\ref{l:B(G,omega)-dual-Banach-algebra}, and $(v_\alpha)_\alpha$ may have been replaced by one of its subnets.
Hence 
$ (\omega u)(\omega v_0) =  \omega u$ for every $ \omega u \in A(G, \omega )$. Therefore,  we have that $\omega v_0\in B_r(G,\omega)$ is the constant function $1$ 

(iii)$\Rightarrow$(i) is immediate if we recall that $B_r(G, \omega ) \subseteq B_r(G)$ and $1 \in B_r(G)$ if and only if $G$ is amenable.
\end{proof}

Recall that for each $u \in A(G)$ and $x \in VN(G)$ we have
\begin{eqnarray}\label{eq:tbm}
\langle \bm_\phi(\omega  u), x\rangle_\omega  &=& \langle \omega  u, \bm^*_\phi( x)\rangle_\omega \\
&=& \langle  u, \bm^*_\phi( x)\rangle.\nonumber
\end{eqnarray}
Now let us define $\tbm_\phi(u)$ given by
\[
\langle \tbm_\phi(u), x\rangle := \langle  u, \bm^*_\phi( x)\rangle \quad \quad \quad (x\in VN(G)).
\]
Note that based on \eqref{eq:tbm}, $\tbm_\phi(u)$ is a normal map on $VN(G)$. Equivalently, $\tbm_\phi(u) \in A(G)$ for every $u \in A(G)$.   One can readily show that for each $u \in A(G)$,

\begin{equation}\label{eq:omega-tbm-commuting}
\bm_\phi(\omega  u) = \omega  \tbm_\phi(u).
\end{equation}

\begin{proposition}\label{p:m(C*)-in-C*}
Let $G$ be an amenable locally compact group, and let $\omega $ be a weight inverse on it. Then for each $\phi \in MA(G, \omega )$, $\bm^*_\phi$ maps $C^*_r(G)$ into $C^*_r(G)$.  Furthermore, $\phi$ is also a multiplier of $B_r(G, \omega )$. 
\end{proposition}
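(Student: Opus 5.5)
The plan is to show that $\bm^*_\phi(x\cdot_\omega(\omega u))$ and, more generally, $\bm^*_\phi$ applied to products of the form $\omega u\cdot_\omega x$ lands in $C^*_r(G)$. We then use Corollary~\ref{c:C*.A(G,w)-dense-C*r}, together with the continuity of $\bm^*_\phi$ in norm, to conclude that $\bm^*_\phi(C^*_r(G))\subseteq C^*_r(G)$.

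\textbf{Step 1: the module action.} First I will record that the module action is compatible with $\bm^*_\phi$. For $u,v\in A(G)$ and $x\in C^*_r(G)\subset VN(G)$, the multiplier property $\bm_\phi((\omega u)(\omega v))=(\omega u)\bm_\phi(\omega v)$ gives, by duality,
\[
\bm^*_\phi(\omega u\cdot_\omega x)=\omega u\cdot_\omega \bm^*_\phi(x),
\]
where $\omega u\cdot_\omega y$ for $y\in VN(G)$ denotes the dual $A(G,\omega)$-module action on $VN(G)=A(G,\omega)^*$. On $C^*_r(G)$ this action agrees with the one from Lemma~\ref{l:B(G,omega)-dual-Banach-algebra}. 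The identity is checked by pairing both sides against $\omega v$.

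\textbf{Step 2: the key claim.} The main step is to show that $\omega u\cdot_\omega y\in C^*_r(G)$ for all $u\in A(G)$ and all $y\in VN(G)$, not merely for $y\in C^*_r(G)$. By the computation in Lemma~\ref{l:B(G,omega)-dual-Banach-algebra},
\[
\omega u\cdot_\omega y=(\id\otimes u)\bigl(\Gamma(y)\Omega\bigr)
\]
(with the slice on the appropriate leg). I will write $u=u'\cdot z$ with $z\in C^*_r(G)$ and $u'\in A(G)$, using Cohen factorization for the $C^*_r(G)$-module $A(G)$. This turns the expression into a slice of $\Gamma(y)(1\otimes z)\Omega$, or of a variant with $z$ on the correct side, mirroring the proof of Theorem~\ref{t:main}.

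The crux is that $\Gamma(y)(1\otimes z)$ lies in the closure of $C^*_r(G)\otimes C^*_r(G)$, in the form $y\otimes$-slicing, for $y\in VN(G)$. This is exactly where amenability should enter, for instance via Fell absorption as in Theorem~\ref{t:main}. Concretely, $(\id\otimes u)\Gamma(y)$ corresponds to the product $u\cdot y$, viewing $VN(G)$ as an $A(G)$-module. For amenable $G$ one knows classically that $A(G)\cdot VN(G)\subseteq UCB(\hat G)$, and the Lau–Granirer type results give $u\cdot y\in C^*_r(G)$ when $u$ has compact support, which holds on a dense set.

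I expect this step to be the main obstacle, since the factor $\Omega$ must be absorbed. My first attempt is to rewrite $\Gamma(y)\Omega$ by approximating $\Omega$ from the relation $\Gamma(\omega)\Omega=\omega\otimes\omega$. Using the density of $\omega C^*_r(G)$ and the fact that $\Omega$ right multiplies $C^*_r(G)\otimes C^*_r(G)$ (Lemma~\ref{mult}), it suffices to handle $(1\otimes z)\Gamma(y)$ with $z\in C^*_r(G)$. If necessary I will invoke the bounded approximate identity of $A(G,\omega)$ from Proposition~\ref{p:bai-identity} in place of this compact-support argument.

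\textbf{Step 3: conclusion.} Given Step 2, Step 1 shows that $\bm^*_\phi(\omega u\cdot_\omega x)\in C^*_r(G)$. Corollary~\ref{c:C*.A(G,w)-dense-C*r} then gives the first assertion. For the second assertion, restrict $\bm^*_\phi$ to a bounded map $C^*_r(G)\to C^*_r(G)$ and take its adjoint on $B_r(G,\omega)=C^*_r(G)^*$. This adjoint extends $\bm_\phi$ from $A(G,\omega)$. Checking on point evaluations via \eqref{eq:point-evaluation}, and using that $A(G,\omega)$ is an ideal in $B_r(G,\omega)$ (Theorem~\ref{t:main}) together with faithfulness, it is multiplication by $\phi$. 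Hence $\phi$ multiplies $B_r(G,\omega)$.
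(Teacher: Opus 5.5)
\textbf{Gap: Step 2 is false.} Step 2 claims that $\omega u\cdot_\omega y\in C^*_r(G)$ for all $u\in A(G)$ and all $y\in VN(G)$. This fails already for amenable $G$. Take the trivial weight inverse $\omega=1$ (with $\Omega=1$) and $y=1$. Pairing with $v\in A(G)$ gives $\langle v,u\cdot 1\rangle=u(e)v(e)$, so $u\cdot 1=u(e)1$. This is not in $C^*_r(G)$ when $G$ is non-discrete (e.g.\ $G=\mathbb R$), because $C^*_r(G)$ is then non-unital.

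The facts you cite do not give the claim:
\begin{itemize}
\item $A(G)\cdot VN(G)\subseteq UCB(\hat G)$ is true, but $UCB(\hat G)$ contains $1$, so it says nothing about $C^*_r(G)$.
\item The compact-support statement fails for any compactly supported $u\in A(G)$ with $u(e)\neq 0$.
\item $(1\otimes z)\Gamma(y)$ need not lie in $C^*_r(G)\otimes C^*_r(G)$ for $y\in VN(G)$. For $y=1$ it is $1\otimes z$, whose slice by a state $\rho$ on the second leg is $\rho(z)1$.
\end{itemize}
The underlying problem is your choice of module identity in Step~1. It routes everything through $\bm^*_\phi(x)$, about which you know only that it lies in $VN(G)$.

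\textbf{The missing idea: move the multiplier onto the algebra element.} The same pairing argument, using commutativity, gives
\[
\langle \omega v,\bm^*_\phi(\omega u\cdot_\omega x)\rangle_\omega=\langle (\omega u)\bm_\phi(\omega v),x\rangle_\omega=\langle (\omega v)\bm_\phi(\omega u),x\rangle_\omega ,
\]
that is, $\bm^*_\phi(\omega u\cdot_\omega x)=\bm_\phi(\omega u)\cdot_\omega x$. Since $\bm_\phi(\omega u)\in A(G,\omega)\subseteq B_r(G,\omega)$, Lemma~\ref{l:B(G,omega)-dual-Banach-algebra} puts this element in $C^*_r(G)$.

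This is exactly the paper's key step. The paper uses the bounded approximate identity from Proposition~\ref{p:bai-identity} and Cohen factorization to write $x=\omega v\cdot_\omega x'$. It then exhibits $\bm^*_\phi(x)=\bm_\phi(\omega v)\cdot_\omega x'$ as the slice $(\id\otimes v')\bigl((1\otimes y)\Gamma(x')\Omega\bigr)$, where $\tbm_\phi(v)=v'\cdot y$. With the corrected identity, your density argument via Corollary~\ref{c:C*.A(G,w)-dense-C*r} also works, and it does not even need amenability for this part.

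\textbf{Second assertion.} You cannot test $\bn_\phi:=(\bm^*_\phi|_{C^*_r(G)})^*$ directly on point evaluations, since $\lambda_s\omega$ lies in $M(C^*_r(G))$ but in general not in $C^*_r(G)$. Use the corrected identity instead. For $a\in B_r(G,\omega)$, $b\in A(G,\omega)$ and $x\in C^*_r(G)$,
\[
\langle \bn_\phi(a)b,x\rangle_\omega=\langle a,\bm_\phi(b)\cdot_\omega x\rangle_\omega=\langle \phi ab,x\rangle_\omega .
\]
Hence $\bn_\phi(a)b=\phi ab$ for all $b\in A(G,\omega)$. For each $s\in G$ choose $b$ with $b(s)\neq 0$ (Theorem~\ref{l:G->D(A(G,w))}); this gives $\bn_\phi(a)=\phi a$. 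Faithfulness alone would not suffice here, since you do not know in advance that $\phi a\in B_r(G,\omega)$.
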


\begin{proof}
Let $x \in C^*_r(G)$. Since $G$ is amenable, by Proposition~\ref{p:bai-identity}, $A(G, \omega )$ has a bounded approximate identity. Also $C^*_r(G)$ is an $A(G, \omega )$-bimodule, by Lemma~\ref{l:B(G,omega)-dual-Banach-algebra}. Therefore, by Cohen's factorization theorem, there is a $v \in A(G)$ and $x'\in C^*_r(G)$ so that $ x = \omega v \cdot_\omega  x'$. Now for each $u \in A(G)$ we have
\begin{eqnarray*}
\langle \omega u,\bm_\phi^*(x)\rangle_\omega  &=& \langle \bm_\phi(\omega )u, x\rangle_\omega \\
&=& \langle \bm_\phi(\omega u), \omega  v \cdot_\omega  x'\rangle_\omega \\
&=& \langle \bm_\phi(\omega u)\omega  v  ,   x'\rangle_\omega \\
&=& \langle\omega u \bm_\phi(\omega  v)  ,   x'\rangle_\omega \\
&=& \langle(\omega u) (\omega  \tbm_\phi(v))  ,   x'\rangle_\omega \\
&=& \langle \omega  \Gamma_*(\Omega ( u \otimes  \tbm_\phi(v))  ,   x'\rangle_\omega \\
&=& \langle  \Gamma_*(\Omega ( u \otimes  \tbm_\phi(v))  ,   x'\rangle\\
&=& \langle    u \otimes  \tbm_\phi(v)  ,   \Gamma(x') \Omega \rangle.
\end{eqnarray*}
Now note that $ \tbm_\phi(v) \in A(G)$. Since $A(G)$ is an $C^*_r(G)$-bimodule and $C^*_r(G)$ has a bounded approximate identity, we have $v' \in A(G)$ and $y \in C^*_r(G)$ so that $ \tbm_\phi(v) = v ' \cdot y$. Hence, we have
\begin{eqnarray*}
\langle \omega u,\bm_\phi^*(x)\rangle_\omega  &=& \langle    u \otimes  (v' \cdot y)  ,   \Gamma(x') \Omega \rangle\\
&=& \langle   ( u \otimes  v')\cdot(1 \otimes y)  ,   \Gamma(x') \Omega \rangle\\
&=& \langle    u \otimes  v'  ,   (1 \otimes y)\Gamma(x') \Omega \rangle\\
&=& \langle    u  ,  \id  \otimes  v' \left( (1 \otimes y)\Gamma(x') \Omega \right) \rangle\\
&=& \langle   \omega  u  ,  \id  \otimes  v' \left( (1 \otimes y)\Gamma(x') \Omega \right) \rangle_\omega \\
\end{eqnarray*}
But similar to the former proofs, $(1 \otimes y)\Gamma(x')\Omega \in C^*_r(G) \otimes C^*_r(G)$, by Lemma \ref{mult}. So by  slicing by $v' \in A(G) \subseteq B(G)=C^*_r(G)^*$, we have that $ \id  \otimes  v' \left( (1 \otimes y)\Gamma(x') \Omega \right) $ belongs to $C^*_r(G)$. Since $u$ was chosen arbitrarily, this proves that $\bm_\phi^*(x) \in C^*_r(G)$.

Next, we prove that $\left(\bm_\phi^{*}|_{C^*_r(G)}\right)^*$ is a multiplier of $B_r(G,\omega )$. To do so, first note that for each $x \in C^*_r(G)$ and $u, v\in A(G)$, we have
\begin{eqnarray*}
\langle u\otimes v, \Gamma(\bm_\phi^*(x))\Omega  \rangle &=& \langle  \Omega (u\otimes v), \Gamma(\bm_\phi^*(x)) \rangle\\
&=&  \langle \Gamma_*( \Omega (u\otimes v)), \bm_\phi^*(x) \rangle\\
&=&  \langle \omega \Gamma_*( \Omega (u\otimes v)), \bm_\phi^*(x) \rangle_\omega \\
&=&   \langle \bm_\phi( (\omega u)(\omega  v)), x \rangle_\omega \\
&=&  \langle \bm_\phi(\omega u)\;  \omega  v, x \rangle_\omega \\
&=&  \langle \omega \Gamma_*(\Omega (\tbm_\phi(u) \otimes v), x \rangle_\omega \\
&=&  \langle  \Gamma_*(\Omega (\tbm_\phi(u) \otimes v), x \rangle\\
&=&  \langle  \tbm_\phi(u) \otimes v, \Gamma(x) \Omega  \rangle\\
&=&  \langle  u \otimes v,\bm_\phi^*\otimes \id( \Gamma(x) \Omega  )\rangle.
\end{eqnarray*}
Since the algebraic tensor product $A(G) \odot A(G)$ is dense in $A(G\times G)$, we have 
$$\Gamma(\bm_\phi^*(x))\Omega  = \bm_\phi^*\otimes \id( \Gamma(x) \Omega  ).$$

Finally, consider $\bn_\phi:=\left(\bm_\phi^{*}|_{C^*_r(G)}\right)^*$. Similarly to the definition of  $\tbm_\phi$, one can prove that there is a bounded  mapping $\tbn_\phi: B_r(G) \rightarrow B_r(G)$ so that
\[
\bn_\phi(\omega  u) = \omega  \tbn_\phi(u)
\]
for every $u \in B_r(G)$. Now let $u,v \in B_r(G)$ and $x \in C^*_r(G)$. Then 
\begin{eqnarray*}
\langle \bn_\phi((\omega u) (\omega  v)), x\rangle_\omega  &=& \langle (\omega u) (\omega  v)), \bm_\phi^*(x)\rangle_\omega \\
&=& \langle\omega  \Gamma^*(\Omega  (u \otimes v)), \bm_\phi^*(x)\rangle_\omega \\
&=& \langle\Gamma^*(\Omega  (u \otimes v)), \bm_\phi^*(x)\rangle\\
&=& \langle u \otimes v, \Gamma(\bm_\phi^*(x))\Omega \rangle\\
&=& \langle u \otimes v,\bm_\phi^*\otimes \id( \Gamma(x) \Omega  )\rangle.
\end{eqnarray*}
Here note that $\Gamma(x) \Omega $ is an element in $M(C^*_r(G) \otimes C^*_r(G))$. So using universality of the multiplier algebra of $C^*$-algebras, one may consider $M(C^*_r(G) \otimes C^*_r(G))$ as a subspace of $(C^*_r(G)\otimes C^*_r(G))^{**}$ and therefore, $(\bm_\phi^*\otimes \id)^*|_{B_r(G\times G)}$ would be nothing but $\tilde\bn_\phi\otimes \id$. So we have
\begin{eqnarray*}
\langle \bn_\phi((\omega u) (\omega  v)), x\rangle_\omega  &=&  \langle \tbn_\phi(u) \otimes v,\Gamma(x) \Omega  \rangle\\
&=&  \langle \Gamma^*(\Omega (\tbn_\phi(u) \otimes v)),x \rangle\\
&=&  \langle \omega  \Gamma^*(\Omega (\tbn_\phi(u) \otimes v)),x \rangle_\omega \\
&=&  \langle (\omega \tbn_\phi(u)) \omega v,x \rangle_\omega \\
&=&  \langle \bn_\phi(\omega u) \omega v,x \rangle_\omega .
\end{eqnarray*}
This proves the desired result.
\end{proof}

\begin{theorem}\label{t:G-amenable-MA(G,w)}
Let $G$ be a locally compact group and $\omega $ be a weight inverse on it. Then $MA(G,\omega ) = M_{\rm cb}A(G,\omega )=B_r(G, \omega )$ if and only if $G$ is amenable.
\end{theorem}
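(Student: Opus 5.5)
We split the proof into the two implications and treat the amenable direction first.

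\emph{Amenable case.} Assume $G$ is amenable. By Corollary~\ref{c:Br(G,w)-cb-multipliers} we already have
\[
B_r(G,\omega)\subseteq M_{\rm cb}A(G,\omega)\subseteq MA(G,\omega),
\]
so it remains to show $MA(G,\omega)\subseteq B_r(G,\omega)$. Fix $\phi\in MA(G,\omega)$.

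By Proposition~\ref{p:bai-identity}, the constant function $1$ lies in $B_r(G,\omega)$; write $1=\omega v_0$ with $v_0\in B_r(G)$. By Proposition~\ref{p:m(C*)-in-C*}, $\bm_\phi^*$ maps $C^*_r(G)$ into itself. Hence
\[
\bn_\phi=\left(\bm_\phi^*|_{C^*_r(G)}\right)^*
\]
is a bounded map on $B_r(G,\omega)$, and by the same proposition it is a multiplier of $B_r(G,\omega)$. Set $\psi:=\bn_\phi(\omega v_0)\in B_r(G,\omega)$. For every $u\in A(G)$ the multiplier property gives
\[
\bn_\phi(\omega u)=\bn_\phi\big((\omega v_0)(\omega u)\big)=\psi\,(\omega u).
\]
We must also check that $\bn_\phi$ extends $\bm_\phi$. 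On $A(G,\omega)$, both $\bn_\phi(\omega u)$ and $\bm_\phi(\omega u)$ pair with $x\in C^*_r(G)$ as $\langle \omega u,\bm_\phi^*(x)\rangle_\omega$. Since $C^*_r(G)$ separates $A(G)$, we get $\bn_\phi(\omega u)=\bm_\phi(\omega u)=\phi\,\omega u$.

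It follows that $\phi\,\omega u=\psi\,\omega u$ pointwise for all $u\in A(G)$. Elements of $A(G,\omega)$ have no common zero on $G$: it is a dense subalgebra of $A(G)$ and by Theorem~\ref{l:G->D(A(G,w))} each point evaluation is a nonzero character. Therefore $\phi=\psi\in B_r(G,\omega)$, which gives all three equalities.

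\emph{Non-amenable case.} Assume $MA(G,\omega)=B_r(G,\omega)$. The constant function $1$ is always a multiplier of $A(G,\omega)$, since it is the identity map. Hence $1\in B_r(G,\omega)\subseteq B_r(G)$, so $G$ is amenable by Proposition~\ref{p:bai-identity} (iii)$\Rightarrow$(i). The same argument applies if only $M_{\rm cb}A(G,\omega)=B_r(G,\omega)$ is assumed, because the identity map is completely bounded.

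\emph{Main obstacle.} The delicate step is the identification of $\bn_\phi$ on $A(G,\omega)$ with $\bm_\phi$, together with the legitimacy of evaluating $\bn_\phi(\omega v_0)$ through the multiplier identity of Proposition~\ref{p:m(C*)-in-C*}. That proposition was stated for $u,v\in B_r(G)$, so it applies directly here, but the bookkeeping between the pairings $\langle\cdot,\cdot\rangle_\omega$ and $\langle\cdot,\cdot\rangle$ needs care. As an alternative, one could let $\psi$ be a weak$^*$ cluster point of $\bm_\phi(\omega v_\alpha)$, where $(\omega v_\alpha)$ is a bounded approximate identity of $A(G,\omega)$, and use the dual Banach algebra structure from Lemma~\ref{l:B(G,omega)-dual-Banach-algebra}. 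This mirrors the proof of Proposition~\ref{p:bai-identity}.
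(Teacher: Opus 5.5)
Your proof is correct. For the key inclusion $MA(G,\omega)\subseteq B_r(G,\omega)$ and for the converse it follows the paper.

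\textbf{What matches the paper.} Proposition~\ref{p:m(C*)-in-C*} makes $\bn_\phi$ a multiplier of $B_r(G,\omega)$, and $1=\omega v_0\in B_r(G,\omega)$ then forces $\phi=\bn_\phi(\omega v_0)$. You spell out two steps the paper compresses into one sentence:
\begin{itemize}
\item $\bn_\phi$ agrees with $\bm_\phi$ on $A(G,\omega)$;
\item $A(G,\omega)$ has no common zero on $G$.
\end{itemize}
One small wording fix: $\bn_\phi(\omega u)$ is a priori only in $B_r(G,\omega)$. So the separation you need is that of $B_r(G,\omega)=C^*_r(G)^*$ by $C^*_r(G)$, not of $A(G)$ by $C^*_r(G)$.

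\textbf{The genuine difference.} The paper gets $MA(G,\omega)=M_{\rm cb}A(G,\omega)$ from \cite[Theorem~6.2]{daws-mul}: a completely contractive Banach algebra with a bounded approximate identity has only completely bounded multipliers. You instead close the chain $B_r(G,\omega)\subseteq M_{\rm cb}A(G,\omega)\subseteq MA(G,\omega)\subseteq B_r(G,\omega)$. The first inclusion is Corollary~\ref{c:Br(G,w)-cb-multipliers}, which holds for every $G$.

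\textbf{What each route buys.}
\begin{itemize}
\item Your route is self-contained within the paper. It avoids the external automatic-complete-boundedness theorem and uses the bounded approximate identity only through $1\in B_r(G,\omega)$ and Proposition~\ref{p:m(C*)-in-C*}.
\item The paper's route gives $MA=M_{\rm cb}$ directly from the bounded approximate identity, without needing the identification of multipliers with $B_r(G,\omega)$.
\end{itemize}
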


\begin{proof}
Suppose that  $G$ is amenable then, by Proposition~\ref{p:bai-identity}, $A(G,\omega )$ is a completely contractive Banach algebra which has a bounded approximate identity.
But  \cite[Theorem~6.2]{daws-mul}  implies that for such a Banach algebra, every multiplier is completely bounded, i.e. $MA(G,\omega )=M_{\rm cb}A(G, \omega )$.

Also, since $A(G, \omega )$ is an ideal in $B_r(G, \omega )$, $B_r(G,\omega )$ can be considered as a subset of $MA(G,\omega )$. On the other hand, by  Proposition~\ref{p:m(C*)-in-C*}, every element in $MA(G, \omega )$ can be considered as a multiplier of $B_r(G, \omega )$. But since $G$ is amenable, by Proposition~\ref{p:bai-identity}, $1 \in B_r(G, \omega )$. This implies that $MA(G, \omega )$ is a subset of $B_r(G,\omega )$. Ergo, $MA(G,\omega )=B_r(G,\omega )$.

Conversely if $MA(G, \omega )=B_r(G, \omega )$, then the constant function $1$ belongs to  $B_r(G,\omega )$.  Then by Proposition~\ref{p:bai-identity}, $G$ has to be amenable.
\end{proof}

\bigskip

{\bf Acknowledgements.} E.S. and L.T. would like to thank the Wenner-Grenn Foundation (the project GFOh2024-0025) which supported the sabbatical visit of Ebrahim Samei to Chalmers University of Technology, 2025-2026. Research of L.T. was supported by the Swedish Research Council project grant 2023-04555. E.S. was partially supported by NSERC Discovery Grant RGPIN-2025-04833. O.G., E.S. and L.T. thank the Swedish Research Council for their support under grant no.\ 2021-06594 while the authors were in residence at Institut Mittag-Leffler in Djursholm, Sweden. 

We would like thank Nico Spronk and Matthew Wiersma  for discussions and providing the references \cite{bekka, fsw, KenRuam}. 

\footnotesize

\bibliographystyle{plain}
\bibliography{Bibliography}

\noindent Mahmood Alaghmandan:\\
 Address: Model Validation and Risk Management, Farm Credit Canada, 100 Queen St no.1460, Ottawa, ON K1P 1J9 Canada\\
e-mail: mahmood.alaghmandan@fcc.ca

\medskip 

\noindent Olof Giselsson:\\
\noindent Address: Department of Mathematical Sciences, Chalmers University of Technology and University of Gothenburg, Chalmers Tv{\"a}rgata 3, Gothenburg, SE-412 58,  Sweden \\
e-mail: olofgiselsson@proton.se

\medskip

\noindent Ebrahim Samei:\\
\noindent Address: Department of Mathematics and Statistics, University of Saskatchewan, Saskatoon, Saskatchewan, S7N 5E6, Canada\\
e-mail: ebrahim.samei@usask.ca

\medskip
\noindent Lyudmila Turowska:\\
\noindent Address: Department of Mathematical Sciences, Chalmers University of Technology and University of Gothenburg, Chalmers Tv{\"a}rgata 3, Gothenburg, SE-412 58,  Sweden \\
e-mail: turowska@chalmers.se
\end{document}